\documentclass[11pt,letterpaper]{amsart}
\usepackage[utf8]{inputenc}
\usepackage[margin=1in]{geometry}
\usepackage{amsmath}
\usepackage{amssymb}
\usepackage{amsthm}
\usepackage{amscd}
\usepackage{enumerate}
\usepackage{xcolor}
\usepackage{subcaption}

\usepackage{graphicx}
\usepackage{amsmath,amsthm,amsfonts,amscd,amssymb,comment,eucal,latexsym,mathrsfs}
\usepackage{stmaryrd}
\usepackage[all]{xy}

\usepackage{epsfig}

\usepackage[all]{xy}
\xyoption{poly}
\usepackage{fancyhdr}
\usepackage{wrapfig}
\usepackage{epsfig}

\theoremstyle{plain}
\newtheorem{thm}{Theorem}[section]
\newtheorem{prop}[thm]{Proposition}
\newtheorem{lem}[thm]{Lemma}
\newtheorem{cor}[thm]{Corollary}

\theoremstyle{definition}
\newtheorem{defn}[thm]{Definition}
\theoremstyle{remark}

\newtheorem{example}{Example}

  \def\C{{\mathbb{C}}}  \def\E{{\mathbb{E}}}        \def\M{{\mathbb{M}}} \def\N{{\mathbb{N}}}    \def\R{{\mathbb{R}}}        

     \def\cF{{\mathcal{F}}}  \def\cH{{\mathcal{H}}}   \def\cK{{\mathcal{K}}}       \def\cR{{\mathcal{R}}} \def\cS{{\mathcal{S}}}  \def\cU{{\mathcal{U}}}     

\newcommand\Aut{\operatorname{Aut}}

\newcommand\Ball{{\operatorname{Ball}}}

\newcommand\co{\operatorname{co}}

\newcommand\dom{\operatorname{dom}}

\newcommand\esssup{\operatorname{esssup}}

\newcommand\Fix{{\operatorname{Fix}}}

\newcommand\id{\operatorname{id}}

\newcommand\Meas{{\operatorname{Meas}}}

\newcommand\Prob{\operatorname{Prob}}
\newcommand\ran{\operatorname{ran}}

\newcommand\supp{\operatorname{supp}}

\newcommand\Stab{\operatorname{Stab}}

\newcommand\Span{\operatorname{span}}

\newcommand\gr{\operatorname{gr}}

\newcommand\tr{\operatorname{tr}}

\newcommand{\actson}{\curvearrowright}

\newcommand{\acston}{\actson}

\newcommand{\ip}[1]{\langle #1 \rangle}

\begin{document}
\title{Coamenability and strong ergodicity}

\author{Ben Hayes}
\address{Department of Mathematics, University of Virginia\\
141 Cabell Drive, Kerchof Hall
P.O. Box 400137,
Charlottesville, VA 22904}
\email{brh5c@virginia.edu}

\begin{abstract}
 Following methods of Bannon-Marrakchi-Ozawa, we show that for coamenable inclusion $\cS\leq \cR$ of ergodic, probability measure-preserving relations, we have that $\cR$ is strongly ergodic if and only if $\cS$ is strongly ergodic. More general results are given when $\cS\leq \cR$ is coamenable, $\cR$ is strongly ergodic, but we do not assume ergodicity of $\cS$. As a consequence, if $\Lambda\leq \Gamma$ is a coamenable inclusion of groups, then any strongly ergodic $\Gamma$ action has countably many ergodic components for the $\Lambda$ action, each of which is strongly ergodic.
\end{abstract}

\maketitle

\section{Introduction}

Strong ergodicity is a concept that arose out of the works \cite{CWPropT, SchmidtCohom, SchmidtSpectralGap}  and is a fundamental topic in the study of ergodic theory of nonamenable groups and measured group theory. If $\Gamma$ is a countable group, and $(X,\mu)$ is a standard probability space, we say that $\Gamma\actson (X,\mu)$ is \emph{strongly ergodic} if whenever $(E_{n})_{n}$ is a sequence of measurable subsets of $X$ with $\mu(g E_{n}\Delta E_{n})\to_{n\to\infty}0$ for all $g\in \Gamma,$ then we necessarily have that $\mu(E_{n})(1-\mu(E_{n}))\to 0$ (so up to subsequences, either $\mu(E_{n})\to 0$ or $\mu(E_{n})\to 1$). It is direct to see that if $(E_{n})_{n}$ is a sequence of measurable subsets of $X$ with $\mu(g E_{n}\Delta E_{n})\to_{n\to\infty}0$ for all $g\in \Gamma,$ then $\mu(\gamma E_{n}\Delta E_{n})\to 0$ for all $\gamma$ in the full group, $[\cR_{\Gamma,X}]$ of $\Gamma\actson (X,\mu)$ defined by
\[[\cR_{\Gamma,X}]=\{\gamma\colon X\to X: \text{$\gamma$ is a bimeasurable bijection and } \gamma(x)\in \Gamma x \text{ for all almost every $x\in X$}\}.\]
It can thus be regarded as  property of the orbit equivalence relation of this action, $\cR_{\Gamma,X}=\{(x,\gamma(x)):X\in X\}.$ More generally, there is a general notion of strong ergodicity for discrete, probability measure-preserving, orbit equivalence relations (we remark that there are genuine subtleties in considering strong ergodicity for actions when dropping the measure-preserving assumption, see e.g the recent paper \cite{giritlioglu2026translationactionsnonunimodulargroups}).  In particular, strong ergodicity of the action is an orbit equivalence invariant for the action. 

Part of its importance is its connection to many notions and problems in the field. Strong ergodicity is implied by spectral gap  (i.e. absence of almost-invariant unit vectors) of the Koopman representation of $\Gamma$ on $L^{2}$-functions with mean zero (the converse is false, see \cite[Example 2.7]{SchmidtSpectralGap}) and is equivalent to spectral gap of the Koopman representation of the full group of $\Gamma\actson (X,\mu)$ (see \cite[Theorem 2.4]{SchmidtCohom}). It can thus be viewed as both the equivalence relation version of spectral gap, and as such asserts some version of \emph{expansion} (in the sense of expander graphs) for the relation. It also admits a cohomological characterization see \cite{SchmidtCohom}, and can be characterized (in the ergodic case) as not admitting the hyperfinite $\textrm{II}_{1}$-relation as a quotient \cite[Theorem 2.2]{JonesSchmidt}. As such, it has connections with the notion of stable relations \cite[Section 3]{JonesSchmidt}.

There is a fruitful analogy between strongly ergodic relations and full von Neumann algebras, in the sense of Connes \cite{ConnesSE, ConnesAP}. Indeed, it is direct to see from the definitions that if the von Neumann algebra of an orbit equivalence relation is full, then that relation is strongly ergodic. The converse is false (see \cite{ConnesJonesCartan} for a counterexample). 

There is a natural notion of coamenability for inclusions of discrete, probability measure-preserving equivalence relations, e.g. one could simply demand that the corresponding inclusion of von Neumann algebras is a relatively amenable inclusion in the sense of \cite{PopaCorr}.
In \cite{HayesCoAmen}, we investigated several other equivalent ways of defining an inclusion of relations to be coamenable, including several equivalences which are analogous to characterizations of amenability of relations given in \cite{CFW, OrnWeiss, ZimmerAMen1, ZimmerAmen2}.

In \cite{BMOFull}, Bannon-Marrakchi-Ozawa showed that if $N\leq M$ is an inclusion of von Neumann algebras which are factors (i.e. with trivial center), and if $M$ is full, then so is $N$ (in fact, the authors have general results only assuming fullness and factoriality of $M$). This resolved a question of Popa from \cite{PopaCorr}. Due to the analogy between fullness and strong ergodicity, at the CIRM conference ```Orbit equivalence and topological and measurable dynamics" Marrakchi asked us the following: if $\cS\leq \cR$ is a coamenable inclusion of ergodic, probability measure-preserving orbit equivalence relations, is it true that $\cS$ is strongly ergodic if and only if $\cR$ is strongly ergodic? In this paper, we showed that the answer to Marrakchi's question is yes.

\begin{thm}\label{thm: intro thm}
Let $\cS\leq \cR$ be ergodic, discrete, measure-preserving equivalence relations on a standard probability space $(X,\mu)$. If $\cS\leq \cR$ is coamenable, then $\cR$ is strongly ergodic if and only if $\cS$ is strongly ergodic.     
\end{thm}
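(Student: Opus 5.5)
Two directions. The direction ``$\cS$ strongly ergodic $\Rightarrow$ $\cR$ strongly ergodic'' is immediate and needs neither coamenability nor ergodicity of $\cR$. If $(E_n)_n$ is asymptotically $\cR$-invariant, i.e. $\mu(\gamma E_n\Delta E_n)\to 0$ for all $\gamma\in[\cR]$, then since $[\cS]\subseteq[\cR]$ it is also asymptotically $\cS$-invariant. Hence $\mu(E_n)(1-\mu(E_n))\to 0$.

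The substantive direction assumes $\cR$ is strongly ergodic, $\cS$ is ergodic and $\cS\leq\cR$ is coamenable, and shows $\cS$ is strongly ergodic. I would argue by contradiction, following the Bannon--Marrakchi--Ozawa strategy. Suppose $(E_n)_n$ is asymptotically $\cS$-invariant with $\mu(E_n)\to t\in(0,1)$; passing to a subsequence, take $t$ fixed. Set $f_n=1_{E_n}$. The goal is to average $f_n$ over $\cR$ relative to $\cS$, using coamenability, so as to produce asymptotically $\cR$-invariant sets.

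For coamenability I would use the formulation from \cite{HayesCoAmen} as relative amenability of $L(\cS)\subseteq L(\cR)$ in Popa's sense. Equivalently, there is an $\cS$-invariant mean on the ``homogeneous space'': an $L^\infty(X)$-valued, $[\cR]$-equivariant state on $L^\infty$ of the fibers $\cR/\cS$. In Reiter form this gives measurable families of finitely supported probability vectors $\xi_k(x,\cdot)$ on the classes $[x]_\cR/\cS$ that are asymptotically $[\cR]$-equivariant in $L^1$. Each class in $[x]_\cR/\cS$ corresponds to an $\cS$-class, and $f_n$ is nearly constant in the $\cS$-directions. So we can define
\[ F_{n,k}(x)=\sum_{c\in [x]_\cR/\cS}\xi_k(x,c)\,f_n(\phi_c(x)),\]
where $\phi_c$ are measurable choices of representatives in the class $c$. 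Near-$\cS$-invariance of $f_n$ makes this essentially independent of the choice of representatives. Near-equivariance of $\xi_k$ then makes $F_{n,k}$ almost $[\cR]$-invariant once $k$ is large and $n\gg k$.

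One must then pass from almost invariant \emph{functions} to almost invariant \emph{sets}. This uses the standard fact that strong ergodicity is equivalent to: every asymptotically invariant bounded sequence in $L^\infty$ is asymptotically constant in $L^2$. A diagonal sequence $F_n=F_{n,k(n)}$ therefore satisfies $\|F_n-\int F_n\|_2\to 0$. Since $\cR$ is measure preserving, $\int F_n\approx t$.

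The main obstacle is extracting a contradiction from the fact that the $\xi$-average of $f_n$ is nearly the constant $t$. Convexity loses information here: averaging a $\{0,1\}$-valued function can flatten it. The fix, as in BMO, is to apply the same argument to $f_n\otimes$-type quantities, i.e. to $|f_n-f_m|$ or to products. I would use that for every $h\in[\cR]$ the function $x\mapsto 1_{E_n}(x)1_{E_n}(h x)$ is also asymptotically $\cS$-invariant when $h$ normalizes $\cS$. In general I would use the pair relation: the sequence $g_n(x,y)=f_n(x)f_n(y)$ on $\cR$ with the $\cS\times\cS$-action, averaged via $\xi\otimes\xi$, should be nearly constant $t^2$. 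Its restriction to the diagonal ($\cS$-classes) gives $f_n^2=f_n$ with mass $t\neq t^2$, and ergodicity of $\cS$ will be used to show the diagonal carries positive $\xi\otimes\xi$ weight.

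Making this last step precise is where I expect the difficulty to lie. I would first try to replace the means by Følner-type sets in $\cR/\cS$, so that the weight of the diagonal is controlled by the sizes of those sets.
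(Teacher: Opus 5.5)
\textbf{Verdict: genuine gap.} Your direction ``$\cS$ strongly ergodic $\Rightarrow$ $\cR$ strongly ergodic'' is correct. The converse breaks at exactly the step you flag, and more detail will not fix it: the averaging scheme cannot yield a contradiction.

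\emph{Why the averaging stalls.} Everything you derive up to ``$F_n\approx t$ in $L^2$'' holds for \emph{every} asymptotically $\cS$-invariant sequence with $\mu(E_n)\to t$. Near-constancy of $\xi_k$-averages of translates of $f_n$ is a law-of-large-numbers effect and says nothing about $f_n$ itself. (Incidentally, $\int F_n\approx t$ comes from $f_n\to t$ weak$^*$, i.e.\ from ergodicity of $\cS$, not from measure preservation, since the weights need not be balanced.)

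The pair trick adds no information. Averaging $f_n(x)f_n(y)$ against $\xi_k\otimes\xi_k$ at the base point $x$ just returns $F_n(x)^2$. Its diagonal part $\sum_c\xi_k(x,c)^2f_n(\phi_c(x))$ integrates to about $t\,\|\xi_k\|_{L^2(\cR/\cS)}^2$. So the off-diagonal part is forced to be about $t^2-t\|\xi_k\|_{L^2(\cR/\cS)}^2$. To contradict this you would need two things you do not have:
\begin{enumerate}[(a)]
\item an independent lower bound on the correlations $\int (f_n\circ\phi_c)(f_n\circ\phi_{c'})\,d\mu$ for distinct classes $c\neq c'$, i.e.\ asymptotic independence of translates, which is essentially what is to be proved;
\item $\|\xi_k\|_{L^2(\cR/\cS)}$ bounded below.
\end{enumerate}
That weight is a property of the Reiter vectors, not something ergodicity of $\cS$ controls. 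For F\o lner-type averages over sets of size $N_k$ it equals $1/N_k$, which tends to $0$ when the F\o lner sets must grow. This happens e.g.\ for $\Lambda\trianglelefteq\Gamma$ with $\Gamma/\Lambda$ infinite amenable, a case covered by the theorem. What would make the diagonal visible is an $[\cR]$-invariant vector in $L^2(\cR/\cS)$, i.e.\ (Proposition~\ref{prop:translation of invariant vectors}) finite index, and $\cS$ itself need not have finite index in $\cR$.

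\emph{The missing idea.} Finite index is obtained not for $\cS$ but for the enlarged relation $\cS\leq\widehat{\cS}\leq\cR$ defined by $L(\widehat{\cS})=\Fix_{\cS}(L^{\infty}(X)^{\omega})'\cap L(\cR)$. Equivalently, $[\widehat{\cS}]$ is the set of $\gamma\in[\cR]$ acting trivially on $\Fix_{\cS}(L^{\infty}(X)^{\omega})$. This, not tensor-type averaging, is the Bannon--Marrakchi--Ozawa device. The paper's argument then runs as follows.
\begin{enumerate}[(1)]
\item Coamenability passes up to $\widehat{\cS}\leq\cR$, giving $L^2(X)\preceq L^2(\cR/\widehat{\cS})$.
\item Lemma~\ref{lem: analogue of BMO} gives the reverse, $L^2(\cR/\widehat{\cS})\preceq L^2(X)$. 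Take a minimal-$\|\cdot\|_2$ point $k$ of the weak$^*$-compact convex set of limits of averages of $(\overline{v}\,\alpha_{\gamma_j}(v))_j$ over almost $\widehat{\cS}$-invariant $v\in\Meas(X,S^1)$. Multiplying $k$ by $\alpha_{\gamma_j}(v)\overline{v}$ for unitaries $v$ of the ultrapower fixed-point algebra, and using uniqueness of the minimal-norm point, gives $\alpha_{\gamma_j}(f)k_j=fk_j$ for all $f$ in that algebra. By the defining property of $\widehat{\cS}$ this forces $k_j=1_{\{x:[\gamma_j^{-1}(x)]_{\widehat{\cS}}=[x]_{\widehat{\cS}}\}}$.
\item Strong ergodicity of $\cR$ is used as spectral gap, $P_{\C 1}\in\alpha(C^{*}(\cR))$. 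This upgrades weak equivalence with $L^2(X)$ to an honest $[\cR]$-invariant vector in $L^2(\cR/\widehat{\cS})$ (Corollary~\ref{cor: weak containment strong ergodic}).
\item Hence $[\cR|_E:\widehat{\cS}|_E]<\infty$ for some positive-measure $\widehat{\cS}$-invariant $E$, so $\widehat{\cS}|_E$ is strongly ergodic.
\item Since $[\widehat{\cS}]$ fixes $\Fix_{\cS}(L^{\infty}(X)^{\omega})$ pointwise, $\cS|_E$ is strongly ergodic as well.
\end{enumerate}
Ergodicity of $\cS$ enters only at the very end, to force $E=X$.
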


By the ergodic decomposition, the study of inclusions $\cS\leq \cR$ of probability measure-preserving equivalence relations can be reduced to the case where $\cR$ is ergodic. However, we cannot further reduce to the case that $\cS$ is also ergodic. Thus, it is natural to investigate a generalization of the above Theorem to the case where $\cR$ is assumed strongly ergodic and where we only assume $\cS$ is coamenable in $\cR$. 

\begin{thm}\label{thm: I said the real intro thm}
Let $\cS\leq \cR$ be ergodic, discrete, measure-preserving equivalence relations on a standard probability space $(X,\mu)$. If $\cS\leq \cR$ is   coamenable, and $\cR$ is strongly ergodic, then there is a positive measure, $\cS$-invariant set $E\subseteq X$ so that $\cS|_{E}$ is strongly ergodic.
\end{thm}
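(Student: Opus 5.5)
The plan is to adapt the Bannon--Marrakchi--Ozawa averaging argument to the measurable setting, working with the algebra of asymptotically $\cS$-invariant sequences. Fix a free ultrafilter $\omega$ and set
\[A=\{(f_{n})_{n}\in \ell^{\infty}(\N,L^{\infty}(X,\mu)): \|f_{n}\circ\theta-f_{n}\|_{1}\to_{n\to\omega}0 \text{ for all } \theta\in[\cS]\}/\sim_{\omega},\]
an abelian von Neumann subalgebra of $L^{\infty}(X,\mu)^{\omega}$ that contains $L^{\infty}(X,\mu)^{\cS}$ as constant sequences.

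\textbf{Step 1 (reduction to a minimal projection).} I will first show that the conclusion is equivalent to $A$ having a minimal projection. Suppose $p=(1_{F_{n}})_{n}$ is minimal in $A$. A measurable selection argument should replace $F_{n}$, up to $o(1)$ in measure, by the $\cS$-invariant set $E=\lim_{\omega}F_{n}$, whose existence comes from minimality. Then $\cS|_{E}$ is strongly ergodic: any nontrivial asymptotically invariant sequence inside $E$ would split $p$.

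So, for contradiction, I assume $A$ is diffuse. Then for every $m$ we can partition $1$ into $2^{m}$ projections in $A$, each of trace $2^{-m}$, represented by asymptotically $\cS$-invariant sets $F_{n}^{(1)},\dots,F_{n}^{(2^{m})}$.

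\textbf{Step 2 (averaging up to $\cR$ via coamenability).} I will use the characterization of coamenability from \cite{HayesCoAmen}. It provides functions $\xi_{k}$ on $\{(x,[y]_{\cS}): (x,y)\in\cR\}$ that are fiberwise finitely supported unit vectors and are almost invariant under $[\cR]$ in $L^{2}$. For an asymptotically $\cS$-invariant sequence $F_{n}$, define
\[h_{n,k}(x)=\sum_{[y]_{\cS}}|\xi_{k}(x,[y]_{\cS})|^{2}\,1_{F_{n}}(\sigma_{[y]}(x)),\]
where the maps $\sigma_{[y]}$ are finitely many partial isometries in the full pseudogroup of $\cR$ that choose coset representatives. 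Since $\xi_{k}$ has finite fiberwise support and $F_{n}$ is asymptotically $\cS$-invariant, the choice of representatives matters only up to $o_{n}(1)$ for fixed $k$. A diagonal choice $k\to\infty$ slowly relative to $n$ then makes $(h_{n,k(n)})_{n}$ asymptotically $[\cR]$-invariant. Strong ergodicity of $\cR$ forces $h_{n,k(n)}\approx\Phi(p)$, a constant. This yields a unital positive map $\Phi\colon A\to\C$ (a state) with $\Phi(1)=1$.

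\textbf{Step 3 (contradiction, the main obstacle).} It remains to show that a diffuse $A$ is incompatible with $\Phi$. The approach I would try first is to control the variance, in the spirit of Bannon--Marrakchi--Ozawa's proof that their averaging map is multiplicative. I would apply Step 2 to products of sets, $1_{F_{n}}\cdot 1_{F_{n}}\circ\sigma$, and use the almost $[\cR]$-invariance of $\xi_{k}$ together with strong ergodicity of $\cR$ once more. The goal is to show that
\[\sum_{[y]}|\xi_{k}(x,[y])|^{2}\bigl(1_{F_{n}}(\sigma_{[y]}x)-\Phi(p)\bigr)^{2}\to 0,\]
which would make $\Phi$ multiplicative on $A$, hence a character. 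For the partition from Step 1, a character must give mass $1$ to one piece, whereas invariance and symmetry considerations push $\Phi(p^{(i)})$ to be comparable for all $i$. Making this precise is where I expect the real difficulty to lie.

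As a fallback, I would try a lower bound $\Phi(p)\geq\delta\,\tau(p)$ with $\delta>0$ uniform. Such a bound should come from the component of $\xi_{k}$ on the trivial coset, after first cutting down to an $\cS$-invariant set on which $\xi_{k}$ retains uniform mass there. Combined with the splitting into pieces of arbitrarily small trace, this would again contradict diffuseness.
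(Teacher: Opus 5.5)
\textbf{Verdict: genuine gap.} Steps 1 and 2 are plausible, but Step 3 cannot be carried out as proposed. (Step 1 also needs an unstated fact. Minimality of $p$ only gives $p\le 1_E$ for an $\cS$-invariant $E$ on which $\cS$ is ergodic. To conclude $p=1_E$ you need the dichotomy that, for an ergodic relation, the algebra of asymptotically invariant sequences is either $\C$ or diffuse.)

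Step 3 is not a technical loose end; it is the whole content of the theorem, and your mechanism cannot deliver it.

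\emph{The variance target is the goal restated.} Since $1_{F_n}$ is $\{0,1\}$-valued and $\|\xi_{k,x}\|=1$, your variance expression equals $h_{n,k}(x)(1-2\Phi(p))+\Phi(p)^2$, which tends to $\Phi(p)-\Phi(p)^2$. So ``variance $\to 0$'' is literally $\Phi(p)\in\{0,1\}$, not a reduction of it.

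\emph{It cannot follow from almost invariance of $\xi_k$ and strong ergodicity of $\cR$.} Take $\cR$ strongly ergodic, $X=X_1\sqcup X_2$, and $\cS=\cR|_{X_1}\sqcup\cR|_{X_2}$. Then $[\cR:\cS]=2$, so the inclusion is coamenable and the theorem's conclusion holds, and $A=\C1_{X_1}\oplus\C1_{X_2}$. For the exactly invariant field $\xi_x=2^{-1/2}(\delta_{[x]_{\cR}\cap X_1}+\delta_{[x]_{\cR}\cap X_2})$ you get $\Phi(1_{X_1})=1/2$, so $\Phi$ is not a character.

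\emph{For the natural choice of $\xi_k$, $\Phi$ sees nothing of $A$.} For a free action with $\cR=\cR_{\Gamma,X}$, $\cS=\cR_{\Lambda,X}$ and $\xi_k(x,[gx]_{\cS})=\eta_k(\Lambda g)$, where $\eta_k\in\ell^2(\Lambda\backslash\Gamma)$ are almost invariant, one computes $\int h_{n,k}\,d\mu=\sum_{\Lambda g}|\eta_k(\Lambda g)|^2\mu(g^{-1}F_n)=\mu(F_n)$. So $\Phi=\tau|_A$ whatever the structure of $A$.

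\emph{You need two properties for the same $\xi_k$, and nothing supplies both.} The $x$-independent choice gives your ``comparability'' of the $\Phi(p^{(i)})$ but never multiplicativity. An $x$-dependent choice concentrated on one component can give multiplicativity, but then there is no reason for comparability. A (non-normal) character on a diffuse $A$ is not contradictory by itself.

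\emph{The fallback gives nothing on its own.} The bound $\Phi(p)\ge\delta\tau(p)$ is satisfied by $\Phi=\tau$ with $\delta=1$. It only helps combined with multiplicativity, which is the missing piece.

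\textbf{The missing idea.} Averaging elements of $A$ over $\cR/\cS$ yields at best a trace-like functional. The information has to come from how $[\cR]$ acts on $A$. The paper proceeds as follows.
\begin{enumerate}[(i)]
\item Form $\widehat{\cS}$ with $L(\widehat{\cS})=A'\cap L(\cR)$, so $[\widehat{\cS}]$ is exactly the set of $\gamma\in[\cR]$ fixing $A$ pointwise.
\item Coamenability passes to $\widehat{\cS}\le\cR$, giving $L^2(X)\preceq L^2(\cR/\widehat{\cS})$.
\item A minimal-norm argument in $\overline{\co}^{wk^*}\{(\overline{v}\alpha_{\gamma_j}(v))_j\}$, over almost $\cS$-invariant $v\colon X\to S^1$, gives the reverse containment $L^2(\cR/\widehat{\cS})\preceq L^2(X)$.
\item Strong ergodicity enters as spectral gap, $P_{\C1}\in\alpha(C^*(\cR))$. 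This upgrades weak equivalence to an honest $[\cR]$-invariant vector in $L^2(\cR/\widehat{\cS})$, hence $[\cR|_E:\widehat{\cS}|_E]<\infty$ for some positive-measure $\widehat{\cS}$-invariant $E$.
\item Thus $\widehat{\cS}|_E$ is strongly ergodic, and this passes to $\cS|_E$ because $A$ commutes with $L(\widehat{\cS})$.
\end{enumerate}
This route never has to decide whether $A$ is diffuse.
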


Recall that a probability measure-preserving relation $\cR$ on a standard probability space $(X,\mu)$ has \emph{countable ergodic decomposition} if there is a countable set $I$, and a  partition (modulo null sets) $(E_{i})_{i\in I}$ by positive measure, $\cR$-invariant subsets of $X$ so that $\cR|_{E_{i}}$ is ergodic for each $i\in I$. We refer to $\cR|_{E_{i}}$ as the \emph{ergodic components} of $\cR$. 

It would be natural to iterate Theorem \ref{thm: I said the real intro thm} via a measure exhaustion argument to deduce that a coamenable subrelation of a strongly ergodic relation has countably many ergodic components, each of which are strongly ergodic. To do this, one would simply have to prove that any positive measure subset $F\subseteq X$ contains a positive measure set $F_{0}$ for which $\cS|_{F_{0}}$ is strongly ergodic. However, a subtlety appears when attempting this argument which is that (see \cite[Example 1]{HayesCoAmen}) there are cases where $\cS\leq \cR$ is coamenable, but nevertheless there are positive measure subsets $E$ of $X$ for which $\cS|_{E}\leq \cR|_{E}$ is not coamenable. The relevant notion that fixes this is to assume that $\cS\leq \cR$ is \emph{everywhere coamenable}, meaning that $\cS|_{E}\leq \cR|_{E}$ is coamenable for every positive measure $E\subseteq X$. We refer to \cite[Section 4]{HayesCoAmen} for a detailed discussion of other equivalent notions. This is precisely the condition we need to apply a measure exhaustion argument, and Theorem \ref{thm: I said the real intro thm} implies the following corollary. 

\begin{cor}\label{cor: general SE statmeent intro}
Let $\cS\leq \cR$ be ergodic, discrete, measure-preserving equivalence relations on a standard probability space $(X,\mu)$. If $\cS\leq \cR$ is  everywhere coamenable, and $\cR$ is strongly ergodic, then $\cS$ has countable ergodic decomposition and each ergodic component of $\cS$ is strongly ergodic.
\end{cor}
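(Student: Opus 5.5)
The plan is a measure exhaustion argument driven by Theorem \ref{thm: I said the real intro thm}, applied to restrictions of the inclusion. Two preliminary facts are needed. First, $\cR|_{F}$ is strongly ergodic for every positive measure $F\subseteq X$. Second, $\cS|_{F}\leq \cR|_{F}$ is coamenable for every such $F$, which is exactly the everywhere coamenable hypothesis. Granting these, Theorem \ref{thm: I said the real intro thm} applied to $\cS|_{F}\leq \cR|_{F}$ gives a positive measure, $\cS|_{F}$-invariant set $F_{0}\subseteq F$ on which $\cS|_{F_{0}}$ is strongly ergodic. If $F$ is $\cS$-invariant, then $F_{0}$ is $\cS$-invariant as a subset of $X$, since $\cS$-classes of points in $F$ lie in $F$.

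The exhaustion then runs as follows. Let $\mathcal{E}$ be the collection of positive measure, $\cS$-invariant sets $E$ with $\cS|_{E}$ strongly ergodic; in particular each such $\cS|_{E}$ is ergodic. Two distinct members $E,E'$ of $\mathcal{E}$ are either equal or disjoint modulo null sets: $E\cap E'$ is $\cS|_{E}$-invariant, so by ergodicity its measure is $0$ or $\mu(E)$, and symmetrically for $E'$. By Zorn's lemma, or a greedy choice of sets of near-maximal measure, take a maximal family $(E_{i})_{i\in I}$ of pairwise disjoint members of $\mathcal{E}$. The index set $I$ is countable because the sets are disjoint of positive measure in a probability space. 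If $F=X\setminus\bigcup_{i}E_{i}$ had positive measure, it would be $\cS$-invariant, and the previous paragraph would produce $F_{0}\subseteq F$ in $\mathcal{E}$, contradicting maximality. So $(E_{i})_{i}$ partitions $X$. This is a countable ergodic decomposition, and each component is strongly ergodic.

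The main point to verify is that $\cR|_{F}$ is strongly ergodic and ergodic, so that Theorem \ref{thm: I said the real intro thm} applies; this is also where the ergodicity of $\cR$ enters. Ergodicity of $\cR|_{F}$ is standard. For strong ergodicity, suppose $A_{n}\subseteq F$ are asymptotically $[\cR|_{F}]$-invariant. Using ergodicity of $\cR$, choose finitely many partial isomorphisms $\phi_{1},\dots,\phi_{k}$ in the full pseudogroup of $\cR$ with domains in $F$ whose ranges cover $X\setminus F$ up to small measure; the standard argument gives exact covering with countably many and truncation. Extend $A_{n}$ to $B_{n}=A_{n}\cup\bigcup_{j}\phi_{j}(A_{n}\cap\dom\phi_{j})$, choosing the ranges disjoint. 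One then checks that $(B_{n})$ is asymptotically $[\cR]$-invariant: for $\gamma\in[\cR]$, the pieces of $\gamma$ conjugated back to $F$ via the $\phi_{j}$ lie in $[\cR|_{F}]$. Strong ergodicity of $\cR$ then forces $\mu(B_{n})(1-\mu(B_{n}))\to 0$, hence $\mu(A_{n})(\mu(F)-\mu(A_{n}))\to 0$. This is routine but slightly fiddly, and if the paper records it as a known fact I would simply cite it. The coamenability half of the first paragraph needs no argument beyond the hypothesis.
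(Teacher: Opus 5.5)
Your proposal is correct and follows the paper's argument: a Zorn's lemma maximal family of pairwise disjoint, positive measure, $\cS$-invariant sets on which $\cS$ is strongly ergodic is countable, and if its complement $F$ had positive measure, everywhere coamenability together with Theorem \ref{thm: the main result} applied to $\cS|_{F}\leq \cR|_{F}$ would contradict maximality. Your additional check that $\cR|_{F}$ is still strongly ergodic (spreading an asymptotically invariant sequence from $F$ to $X$ via partial isomorphisms with disjoint ranges) supplies a step the paper uses without comment, and it works; the only caveat is that the conjugated pieces of $\gamma$ lie in $[[\cR|_{F}]]$ rather than $[\cR|_{F}]$, and asymptotic invariance passes to these routinely.
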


We remark that the conclusion of the above theorem is related to work of Chifan-Ioana \cite{SolidErg}, indeed they proved the remarkable result that if $\cS$ is a nowhere amenable relation (in the sense that all nontrivial restrictions are nonamenable) which is a subquotient relation of a Bernoulli orbit equivalence relation of a nonamenable group, then $\cS$ has countable ergodic decomposition and each ergodic component of $\cS$ is strongly ergodic. As discussed in \cite{SolidErg}, this property is of utility in the study of percolations of graphs (see \cite{csoka2025quantitativeindistinguishabilitysparsedense} for recent connections) and can be used to simplify part of the celebrated Gaboriau-Lyons theorem \cite{Gaboriau-Lyons} asserting that every nonamenable group contains a ``measurable" copy of the free group on $2$-generators.

In \cite[Proposition 7.2]{HayesCoAmen} we show that if $\Lambda\leq \Gamma$ is a coamenable inclusion of countable groups, and if $\Gamma\actson (X,\mu)$ is any probability measure-preserving action (not assumed essentially free) on a standard probability space $(X,\mu)$, then the orbit equivalence relation for the $\Lambda$ action is everywhere coamenable in the orbit equivalence relation for the $\Gamma$ action. Thus Corollary \ref{cor: general SE statmeent intro} immediately implies the following.

\begin{cor}\label{cor: intro groups}
Let $\Lambda\leq \Gamma$ be a coamenable inclusion of countable groups. Suppose $(X,\mu)$ is a standard probability space and $\Gamma\actson(X,\mu)$ is a strongly ergodic probability measure-preserving action. Then $\Lambda\acston (X,\mu)$ has countably many ergodic components, each of which is strongly ergodic. 
\end{cor}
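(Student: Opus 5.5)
The argument reduces to Corollary \ref{cor: general SE statmeent intro}, using the fact recalled just before the statement. First, note that $\Gamma\actson(X,\mu)$ being strongly ergodic means, by the discussion in the introduction, that the orbit equivalence relation $\cR=\cR_{\Gamma,X}$ is strongly ergodic. It is in particular ergodic, since a nontrivial invariant set $E$ gives the constant sequence $E_n=E$, which violates strong ergodicity. Let $\cS=\cR_{\Lambda,X}\leq \cR$ be the orbit equivalence relation of the restricted action. Because $\Lambda\leq\Gamma$ is coamenable, \cite[Proposition 7.2]{HayesCoAmen} shows that $\cS\leq\cR$ is everywhere coamenable. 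That result needs no freeness of the action, so none is required here either.

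Next, apply Corollary \ref{cor: general SE statmeent intro} to $\cS\leq\cR$. It yields a countable partition $(E_i)_{i\in I}$ of $X$, modulo null sets, into positive measure $\cS$-invariant sets. On each piece $\cS|_{E_i}$ is ergodic and strongly ergodic.

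Finally, translate back to the action. A set is $\cS$-invariant exactly when it is $\Lambda$-invariant, so the $E_i$ are the ergodic components of $\Lambda\actson(X,\mu)$. Strong ergodicity of $\cS|_{E_i}$ means that any sequence $F_n\subseteq E_i$ with $\mu(\gamma F_n\Delta F_n)\to0$ for all $\gamma$ in the full group of $\cS|_{E_i}$ satisfies $\mu(F_n)(\mu(E_i)-\mu(F_n))\to0$. Almost-invariance under each $\lambda\in\Lambda$ implies almost-invariance under the full group, by the same remark made in the introduction for $\Gamma$ (cut $\gamma$ into countably many pieces on which it agrees with a group element). Hence $\Lambda\actson (E_i,\mu|_{E_i}/\mu(E_i))$ is strongly ergodic.

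There is no real obstacle here. The only care needed is in the last step: matching the relation-level notion of strong ergodicity on a restricted, renormalized piece with the action-level definition.
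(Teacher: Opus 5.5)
Your proposal is correct and follows the paper's route. The paper deduces this corollary immediately by combining \cite[Proposition 7.2]{HayesCoAmen}, which gives everywhere coamenability of $\cR_{\Lambda,X}\leq\cR_{\Gamma,X}$ for arbitrary (not necessarily free) actions, with Corollary \ref{cor: general SE statmeent intro}. Your extra remarks only spell out details the paper leaves implicit: that strong ergodicity forces ergodicity of $\cR$, and that strong ergodicity of $\cS|_{E_i}$ transfers to the restricted $\Lambda$-action, since $\Lambda$-almost invariance implies $[\cS|_{E_i}]$-almost invariance.
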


\subsection{Connections to $C^{*}$-Algebra Theory.} Our methods in this paper rely heavily on the representation theory of $\cR$. 
 We recall the precise notion of a representation in Section \ref{sec: unitary rep}, but the essential idea is that one should have a unitary representation of the full group, together with an appropriately continuous representation of $L^{\infty}(X)$, and these representations should be compatible in a natural sense. 
 In Section \ref{sec: unitary rep}, we rephrase this notion as being equivalent to a $*$-representation of a natural $*$-algebra (known as the groupoid ring $\C(\cR)$) which contains $L^{\infty}(X,\mu)$ as a $*$-subalgebra and $[\cR]$ as a subgroup of the unitary group (see Definition \ref{defn: groupoid ring} for more details). We use the fact that unitary representations play an important role in our proof as an opportunity to define a natural $C^{*}$-completion, denoted $C^{*}(\cR)$, of $\C(\cR)$. representations  of $\cR$.
We connect weak containment of representations of $\cR$ to weak containment of this $C^{*}$-algebra. One unitary representation of $\cR$ that is of interesting is the representation $\alpha$ of $\cR$ on $L^{2}(X)$ where $L^{\infty}(X)$ acts by multiplication operators and $[\cR]$  acts by $\alpha_{\gamma}(f)(x)=f(\gamma^{-1}(x))$. As we discuss in \ref{sec: unitary rep}, this example should be thought of the ``trivial" representation of $\cR$. We characterize strong ergodicity as saying that any representation of $\cR$ is weakly equivalent to $L^{2}(X)$ must contain $L^{2}(X)$ (analogous to \cite[Proposition 3.2]{BMOFull} ), and characterize almost invariant vectors via weak containment of the representation of $\cR$ on $L^{2}(X)$. This provides further justification that $L^{2}(X)$ should be thought of as the ``trivial representation" of $\cR$. We further characterize coamenability of inclusions view weak containment of $L^{2}(X)$ into $L^{2}(\cR/\cS)$.

\subsection{Strategy of the proof.}

Our main strategy follows that of \cite{BMOFull}. We sketch the proof in the case that both $\cS,\cR$ are ergodic. 
 First, as mentioned above, we show
(see Corollary \ref{cor: weak containment strong ergodic}) that strong ergodicity of $\cR$ is equivalent to saying that if a representation $\cH$ is weakly equivalent to $L^{2}(X)$ (in the sense of weak containment, see the discussion in Section \ref{sec: weak containment}), then $L^{2}(X)$ embeds in $\cH$. This is similar to \cite[Proposition 3.2]{BMOFull}.

We also use that $\cS$ being strongly ergodic is the same as the action of $[\cS]$ on the ultrapower algebra $L^{\infty}(X)^{\omega}$   (discussed in Section \ref{sec: ultrapower}
) having no nonconstant invariant vectors. Adapting \cite[Proof of Theorem A]{BMOFull}, we build an intermediate relation $\cS\leq \widehat{\cS}\leq \cR$ with the property that if $\gamma\in [\cR]$ acts trivially on all elements of $L^{\infty}(X)^{\omega}$ which are fixed by $\widehat{\cS}$ then $\gamma\in [\widehat{\cS}]$. Adapting \cite[Lemma 3.4]{BMOFull}, we show that this property implies that $L^{2}(\cR/\widehat{\cS})$ is weakly contained in $L^{2}(X)$. Coamenability of $\cS\leq \cR$ then forces $L^{2}(X)$ to be weakly equivalent to $L^{2}(\cR/\widehat{\cS})$. As mentioned above, strong ergodicity of $\cR$ then implies that $L^{2}(\cR/\cS)$ has an invariant vector, and analogous to the group setting (see Corollary \ref{cor: weak containment strong ergodic}) we must have that $\widehat{\cS}$ is finite index in $\cR$. This implies that $\widehat{\cS}$ is strongly ergodic, and as in \cite[Proof of Theorem A]{BMOFull}, the way $\widehat{\cS}$ is constructed will then imply strong ergodicity of $\cS$. Minor changes have to be made here in the case that $\cS$ is not ergodic, namely we end up only having that $\widehat{\cS}$ is finite index in $\cR$ after restricting to a positive measure $\cS$-invariant set. Ultimately this leads to the proof of \ref{thm: I said the real intro thm}, which as mentioned implies Corollaries \ref{cor: general SE statmeent intro},\ref{cor: intro groups}.

\textbf{Acknowledgements.} I thank Amine Marrakchi for asking me at the CIRM conference ``Orbit equivalence and topological and measurable dynamics" whether for a coamenable inclusion $\cS\leq \cR$ we have that $\cS$ is strongly ergodic if and only if $\cR$ is, which was the inception for this work. I thank CIRM for its hospitality and the organizers of this conference for hosting me. I thank Felipe Flores for interesting discussions related to this work. 

\tableofcontents

\section{Preliminaries}

A \emph{standard measure space} is a  pair $(X,\mu)$ where $X$ is a standard Borel space, and $\mu$ is a $\sigma$-finite Borel measure on $X$. It is called a \emph{standard probability space} if $\mu$ is a probability measure. 
We slightly abuse notation and say that $E\subseteq X$ is \emph{$\mu$-measurable} if it is in the domain of the completion of $\mu$. If $\mu$ is clear from the context, we will often simply say ``measurable".
An \emph{equivalence relation over $(X,\mu)$} is a Borel subset $\cR\subseteq X\times X$ so that the relation $\thicksim$ on $X$ given by $x\thicksim y$ if $(x,y)\in \cR$ is an equivalence relation. For $x\in X$, we let $[x]_{\cR}=\{y\in X:(x,y)\in \cR$. We say that $\cR$ is \emph{discrete} if for almost every $x\in X$ we have that $[x]_{\cR}$ is countable. If $\cR$ is discrete, we may turn $\cR$ into a $\sigma$-finite measure space by endowing $\cR$ with the Borel measure
\[\mu_{\cR}(E)=\int_{X}|\{y:(x,y)\in E\}|\,d\mu(x)\mbox{ for all Borel $E\subseteq \cR$.}\]
We will continue to use $\mu_{\cR}$ for the completion of $\mu_{\cR}$. If $\cR$ is discrete, we say that it is \emph{measure-preserving} if the map $\cR\to \cR$ given by $(x,y)\mapsto (y,x)$ preserves $\mu_{\cR}$. We say that $\cR$ is \emph{probability measure-preserving} if $\cR$ is  measure-preserving and $\mu$ is a probability measure.  If $(X,\mu)$ is a probability space, and $E\subseteq X$ is Borel with $\mu(E)>0$ we let
\[\cR|_{E}=\cR\cap (E\times E).\]
If $E$ has positive measure, then $\cR|_{E}$ is a measure-preserving relation over the probability space $(E,\frac{\mu(E\cap \cdot)}{\mu(E)})$. If $\Gamma$ is a countable group, and $\Gamma\actson (X,\mu)$ is a measure-preserving action, with $\Gamma\actson X$ Borel, then $\cR_{\Gamma,X}=\{(x,gx):g\in \Gamma\}$ is a discrete, measure-preserving equivalence relation. We let $[\cR]$ be  all bimeasurable bijections $\gamma\colon X\to X$ so that $\gamma(x)\in [x]_{\cR}$ for almost every $x\in X$. We identify two elements of $[\cR]$ if they agree almost everywhere.  We let $[[\cR]]$ be all bimeasurable bijections $\gamma\colon A\to B$ such that:
\begin{itemize}
\item $A,B\subseteq X$ are measurable,
\item $\gamma(x)\in [x]_{\cR}$ for almost every $x\in A$.
\end{itemize}
We usually use $\dom(\gamma),\ran(\gamma)$ for $A,B$. We typically identify two elements of $[[\cR]]$ if they agree up to sets of measure zero. However just as with the theory of $L^{p}$-spaces we will often blur the lines between a bimeasurable bijection as above and its equivalence class in $[[\cR]]$. Thus, for example, we will say that $\gamma\in [[\cR]]$ is \emph{Borel} to mean bimeasurable bijection as above where $A,B$ are Borel subsets of $X$ and $\gamma$ is a Borel map. 
For $\gamma\in [[\cR]]$, we let $\gamma^{-1}$ be the compositional inverse of $\gamma$. 
If $\gamma_{1},\gamma_{2}\in [[\cR]]$ we define $\gamma_{1}\circ \gamma_{2}$ by saying that $\dom(\gamma_{1}\circ \gamma_{2})=\dom(\gamma_{2})\cap \gamma_{2}^{-1}(\dom(\gamma_{1}))$, and $(\gamma_{1}\circ \gamma_{2})(x)=\gamma_{1}(\gamma_{2}(x))$. 
For $\gamma\in [[\cR]]$ and $f\colon X\to \C$, we define $\alpha_{\gamma}(f)\colon X\to \C$ by $[\alpha_{\gamma}(f)](x)=1_{\ran(\gamma)}(x)f(\gamma^{-1}(x))$. This defines a bounded operator on $L^{2}(X)$, which we still denote as $\alpha_{\gamma}$. 

For Hilbert spaces $\cH,\cK$ we recall the strong and weak operator topologies on $B(\cH,\cK)$. The \emph{strong operator topology} is defined by saying that for a $T\in B(\cH,\cK)$ we have a basis of open neighborhoods of $T$ given 
\[U_{F,\varepsilon}(T)=\bigcap_{\xi\in F}\{S\in B(\cH,\cK):\|(T-S)\xi\|<\varepsilon\},\]
where $F,\varepsilon$ range over finite $F\subseteq \cH$ and $\varepsilon>0$. Similarly, the \emph{weak operator topology} is given by saying that for $T\in B(\cH,\cK)$ we have a basis of open neighborhoods of $T$ given by
\[V_{F,G,\varepsilon}(T)=\bigcap_{\xi\in F,\eta\in G}\{S\in B(\cH,\cK):|\ip{(T-S)\xi,\eta}|<\varepsilon\},\]
where $F\subseteq \cH,G\subseteq \cK$ are finite and $\varepsilon>0$. We often use SOT and WOT to abbreviate strong operator topology and weak operator topology.

If $V$ is a vector space over the reals (or complexes) and $E\subseteq V$, we use $\co(E)$ for its convex hull,
\[\co(E)=\bigcup_{n=1}^{\infty}\left\{\sum_{j=1}^{n}\lambda_{j}x_{j}:x_{1},\cdots,x_{n}\in E, \lambda_{1},\cdots,\lambda_{n}\in [0,1],\sum_{j=1}^{n}\lambda_{j}=1\right\}.\]
If $A$ is a unital $C^{*}$-algebra a \emph{state} is, by definition, a $\varphi\in A^{*}$ so that $\varphi(1)=1$ and $\varphi(x^{*}x)\geq 0$ for all $x\in A$. 

For a normed vector space $V$, we use $\Ball(V)=\{v\in V:\|v\|\leq 1\}$. We recall that the \emph{weak topology} on $V$ is defined by saying that a neighborhood basis of $v\in V$ is given by
\[W_{F,\varepsilon}(v)=\bigcap_{L\in F}\{w\in V:|L(v-w_|<\varepsilon\},\]
where $F$ ranges over finite subsets of $V^{*}$ and $\varepsilon$ ranges over all positive numbers. For $A\subseteq V$, we use $\overline{A}^{wk}$ for its weak closure. By contrast, we recall the \emph{weak$^{*}$ topology} on $V^{*}$ is defined by saying that a neighborhood basis of $L\in V^{*}$ is given by
\[\widetilde{W}_{F,\varepsilon}(L)=\bigcap_{v\in F}\{\widetilde{L}\in V^{*}:|(L-\widetilde{L})(v)|<\varepsilon\},\]
where $F$ ranges over finite subsets of $V$ and $\varepsilon$ ranges over all positive real numbers. For $B\subseteq V^{*}$ we use $\overline{B}^{wk^{*}}$ for its weak$^{*}$ closure.

\section{Unitary representations of orbit equivalence relations}\label{sec: unitary rep}

We begin by recalling the general notion of a measurable field of Hilbert spaces over a probability space, and what a unitary representation of an orbit equivalence relation means in this context.

\begin{defn}Let  $(X,\mu)$ be a standard probability space, then a \emph{measurable field of Hilbert spaces over $X$} is a family $(\mathcal{H}_{x})_{x\in X}$ of separable Hilbert spaces, together with a family $\Meas(\mathcal{H}_{x})\subseteq \prod_{x\in X}\mathcal{H}_{x}$  so that:
\begin{itemize}
\item for every $(\xi_{x})_{x},(\eta_{x})_{x}\in \Meas(\mathcal{H}_{x})$ we have that $x\mapsto \ip{\xi_{x},\eta_{x}}$ is measurable,
\item if $\eta=(\eta_{x})_{x\in X}\in \prod_{x\in X}\mathcal{H}_{x}$ and $x\mapsto \ip{\xi_{x},\eta_{x}}$ is measurable for all $\xi=(\xi_{x})_{x}\in \Meas(\mathcal{H}_{x}),$ then $\eta\in \Meas(\mathcal{H}_{x}),$
\item there is a sequence $(\xi^{(n)})_{n=1}^{\infty}$ with $\xi^{(n)}=(\xi^{(n)})_{x\in X}$ in $\Meas(\mathcal{H}_{x})$ so that $\mathcal{H}_{x}=\overline{\Span\{\xi^{(n)}_{x}:n\in \N\}}$ for almost every $x\in X.$
\end{itemize}
The direct integral, denoted $\int_{X}^{\oplus} \mathcal{H}_{x}\,d\mu(x),$ is defined to be all $\xi\in \Meas(\mathcal{H}_{x})$ so that $\int_{X}\|\xi_{x}\|^{2}\,d\mu(x)<\infty,$ where we identify two elements of $\Meas(\mathcal{H}_{x})$ if they agree outside a set of measure zero. We put an inner product on $\int_{X}^{\oplus}\mathcal{H}_{x}$ by
\[\ip{\xi,\eta}=\int_{X}\ip{\xi_{x},\eta_{x}}\,d\mu(x),\]
and this gives $\int_{X}^{\oplus}\mathcal{H}_{x}$ the structure of a Hilbert space.
\end{defn}
We shall typically drop ``over $X$" in ``a measurable field of Hilbert space over $X$" if $X$ is clear from the context.
Suppose that $(T_{y})_{y\in Y}\in \prod_{y\in Y}B(\cH_{y})$. We then say that $(T_{y})_{y\in Y}$ is a \emph{measurable field of operators} if for any $(\xi_{y})_{y\in Y}\in \Meas(\mathcal{H}_{y})$, we have that $(T_{y}\xi_{y})_{y\in Y}\in \Meas(\mathcal{H}_{y})$. Suppose that $(T_{y})_{y\in Y}$ is a measurable field of operators and $\esssup_{y\in Y}\|T_{y}\|<+\infty$. We may then define $\int_{Y}^{\bigoplus}T_{y}\,d\nu(y):=T$ by $(T\xi)_{y}=T_{y}\xi_{y}$. By \cite[Section IV.7 Equation (6)]{Taka}, it follows that 
\[\left\|\int_{Y}^{\bigoplus}T_{y}\,d\nu(y)\right\|=\esssup_{y\in Y}\|T_{y}\|.\]

\begin{defn}\label{defn:rep of a relation}
Let $\cR$ be a discrete, measure-preserving equivalence relation on $(X,\mu)$. A representation of $\cR$ is a measurable field $\mathcal{H}_{x}$ of Hilbert spaces over $(X,\mu),$ as well as unitaries $\pi(x,y)\in \mathcal{U}(\mathcal{H}_{y},\mathcal{H}_{x})$ satisfying the following axioms:
\begin{itemize}
\item For every pair  measurable sections $(\xi_{x})_{x},(\eta_{x})_{x}$ of $(\mathcal{H}_{x})_{x}$ the function $(x,y)\in \cR\mapsto \ip{\pi(x,y)\xi_{y},\eta_{x}}$ is measurable.
\item For almost every $x\in X,$ it is true that for all $y,z\in [x]_{\cR}$ we have $\pi(x,y)\pi(y,z)=\pi(x,z).$
\end{itemize}
\end{defn}

We will typically say ``let $((\cH_{x})_{x\in X},\pi)$ be a unitary representation of $\cR$" to indicate that $(\cH_{x})_{x\in X}$ is a measurable field of Hilbert spaces, and $\pi(x,y)\in \cU(\mathcal{H}_{y},\mathcal{H}_{x})$ satisfies the above.

Let us present a few examples of representations of $[\cR]$.

\begin{example}\label{ex: group case}
Suppose $\Gamma$ is a countable group and $\Gamma\actson (X,\mu)$ is a free, probability measure-preserving action. Let $\pi\colon G\to \cU(\cH)$ be a unitary representation. In this case, we let $\cR=\{(x,gx):x\in X,g\in \Gamma\}$. We then set $\cH_{x}=\cH$ and $\pi(x,gx)=g^{-1}$ for all $g\in G$ and almost every $x\in X$ (this is almost surely well-defined since the action is free).
\end{example}

\begin{example}\label{ex: trival rep} Consider the representation $\alpha\colon [\cR]\to \mathcal{U}(L^{2}(X,\mu))$ given by $(\alpha(\gamma)f)(x)=f(\gamma^{-1}(x))$ for all $f\in L^{2}(X,\mu),x\in X,\gamma\in [\cR]$. The space $L^{2}(X,\mu)$ can be written as the direct integral of $\mathcal{H}_{x}=\C$ and $\alpha$ is induced by the maps $\alpha(x,y)\in \mathcal{U}(\C,\C)$ for $(x,y)\in \cR$ by $\alpha(x,y)=\id$. This example should be regarded as analogous to the trivial representation of a group. Indeed, if $\cR$ is given by a free action of a group, then this example is isomorphic with Example \ref{ex: group case} with $\pi$ being the trivial representation on a one-dimensional Hilbert space. 
\end{example}

\begin{example}\label{ex: left/right reg rep}
We can write $L^{2}(\cR)$ as the direct integral of $\ell^{2}([x]_{\cR})$ by identifying $f\in L^{2}(\cR)$ with $f=(f_{x})_{x}\in \Meas(\ell^{2}([x]_{\cR}))$ given by $f_{x}(y)=f(x,y)$ and $\lambda(x,y)\in \mathcal{U}(\ell^{2}([x]_{\cR}),\ell^{2}([y]_{\cR}))=\mathcal{U}(\ell^{2}([x]_{\cR})$ is  given by $\lambda(x,y)=\id$.
We can also consider $L^{2}(\cR)$ as the direct integral of $\ell^{2}([y]_{\cR})$ in a second manner, by identifying $f\in L^{2}(\cR)$ with $f=(f^{y})_{y}\in\Meas(\ell^{2}([y]_{\R}))$ by $f^{y}(x)=f(x,y)$. Under this identification, we define $r(x,y)\in \mathcal{U}(\ell^{2}([x]_{\cR}),\ell^{2}([y]_{\cR}))=\mathcal{U}(\ell^{2}([x]_{\cR}))$ by $r(x,y)=\id$. 

We may regard $\lambda,r$ as the analogues of the left/right regular representations of a group. Indeed, if $\cR$ is given by a free action of a group $\Gamma$ then $\lambda,r$ are isomorphic to the example given in \ref{ex: group case} with $\pi$ being the left/right regular representation.
\end{example}

\begin{example}\label{ex: quasi-regular rep}
Suppose $\cS\leq \cR$ are discrete, measure-preserving equivalence relations over a standard probability space $(X,\mu)$. Since $\cS$ is a subrelation of $\cR$, each $\cR$-equivalence class is a disjoint union of $\cS$-equivalence classes. For $x\in X$, we let $[x]_{\cR}/\cS$ be the space of $\cS$-equivalence classes in $[x]_{\cR}=\{y\in X:(x,y)\in \cR\}$.

We can give the family $(\ell^{2}([x]_{\cR}/\cS))_{x}$ a measurable structure by declaring that $\xi=(\xi_{x})_{x}\in \prod_{x\in X}\ell^{2}([x]_{\cR}/\cS)$ is measurable if $x \mapsto \xi_{x}([\gamma(x)]_{\cS})$ is measurable, for all $\gamma\in [\cR]$. General facts about direct integrals imply this collection of measurable vectors satisfy the above axioms (see Lemma 3.3 in \cite{AFH}). 

It is direct to check that $\int_{X}^{\oplus}\ell^{2}([x]_{\cR}/\cS)\,d\mu(x)$ is canonically isomorphic with the space $L^{2}(\cR/\cS)$ discussed in \cite{HayesCoAmen, AFH} and that the representation $\lambda_{\cR/\cS}$ discussed therein corresponds to the unitary representation $\lambda_{\cR/\cS}(x,y)=\id.$

The representation $\lambda_{\cR/\cS}$ should be regarded as analogous to the quasi-regular representation of a group $\Gamma$ on $\ell^{2}(\Gamma/\Delta)$ for a subgroup $\Delta$. If $\cR$ is a given by a free action of a countable group $\Gamma$ and $\cS$ is given by the action of a subgroup $\Lambda$, then $\lambda_{\cR/\cS}$ is isomorphic to the example given in Example \ref{ex: group case} with $\pi$ being the quasi-regular representation on $\ell^{2}(\Gamma/\Delta)$. 
\end{example}

Given a representation $(\mathcal{H}_{x},\pi)$ of $\cR,$ let $\mathcal{H}=\int_{X}^{\bigoplus}\mathcal{H}_{x}\,d\mu(x).$ We will proceed to discuss how the representation $\pi$ allows us to define bounded operators on this total space $\cH$.

\begin{defn}\label{defn: groupoid ring}
Let $\cR$ be a discrete, probability measure-preserving equivalence relation on a standard probability space $(X,\mu)$. We define the groupoid ring $\C(\cR)$ to be all $k\in L^{\infty}(\cR)$ so that the functions 
\[y\mapsto |\{(x,y):k(x,y)\ne 0\}|,\]
\[x\mapsto |\{(x,y):k(y,x)\ne 0\}\,\]
are essentially bounded functions on $X$. We turn $\C(\cR)$ into an algebra using the convolution product 
\[(k_{1}*k_{2})(x,y)=\sum_{z\in [x]_{\cR}}k_{1}(x,z)k_{2}(z,y)\]
for multiplication. Equipping $\C(\cR)$ with the $*$-operation
\[k^{*}(x,y)=\overline{k(y,x)},\]
we see that $\C(\cR)$ is a $*$-algebra. 
\end{defn}
We regard $L^{\infty}(X)\subseteq \C(\cR)$ via $f\mapsto ((x,y)\mapsto \delta_{x=y}f(x)$). It is an exercise to check that this embedding is an injective $*$-homomorphism. We have an injective homomorphism $[\cR]\subseteq \cU(\C(\cR)):=\{u\in \C(\cR):u^{*}u=1=uu^{*}\}$ via $\gamma\mapsto 1_{\gr(\gamma^{-1})}$, where $\gr(\gamma^{-1})=\{(x,\gamma^{-1}(x)):x\in X\}.$ We typically identify $[\cR]$ with its image under this homomorphism. 
By \cite[Lemma 3.3]{SauerBetti}, it follows that for every $f\in \C(\cR)$ we may write $f=\sum_{\gamma\in F}f_{\gamma}\gamma$ where $f_{\gamma}\in L^{\infty}(X)$ and $F\subseteq[\cR]$ is finite. It is also direct to check that 
\[\gamma f\gamma^{-1}=\alpha_{\gamma}(f) \text{ for all $\gamma\in [\cR],f\in L^{\infty}(X)$.}\]

Suppose $((\cH)_{x\in X},\pi)$ is a unitary representation of $\cR$, and $f\in \C(\cR)$, then for $\xi=(\xi_{x})_{x}\in \cH:=\int_{X}^{\oplus} \cH_{x}\,d\mu(x)$ we define
\[(\pi(f)\xi)_{x}=\sum_{y}f(x,y)\pi(x,y)\xi_{y}.\]
Let us show this is a bounded operator. Choose $C\geq 0$ so that 
\[\sum_{y}|f(x,y)|\leq C,\]
\[\sum_{y}|f(y,x)|\leq C\]
for almost every $x\in X$. 
Note that 
\begin{align*}
 \|\pi(f)\xi\|_{\cH}^{2}\leq \int_{X}\left(\sum_{y}|f(x,y)|\|\xi_{y}\|\right)^{2}\,d\mu(x)&\leq \int_{X} \left(\sum_{y}|f(x,y)|\right)\left(\sum_{y}|f(x,y)|\|\xi_{y}\|^{2}\right)\,d\mu(x)\\
 &\leq C\int_{X} \sum_{y}|f(x,y)|\|\xi_{y}\|^{2}\,d\mu(x)\\
&=C\int_{X}\sum_{y}|f(y,x)|\|\xi_{x}\|^{2}\,d\mu(x)\\
 &\leq C^{2}\int_{X}\|\xi_{x}\|^{2}\,d\mu(x)\\
 &=C^{2}\|\xi\|_{\cH}^{2}.
\end{align*}
We remark that the above argument is similar to what occurs in \cite[Proposition 4.2]{ElekLip}.
It is direct to check that the map $\pi\colon \C(\cR)\to B(\cH)$ defined above is $*$-homomorphism.
 Under the identification discussed above, we have nice formulas for $\pi$ on $L^{\infty}(X,\mu)$, $[\cR]$:
\[\pi(f)=\int_{X}^{\oplus}f(x)\,d\mu(x) \textnormal{ for all $f\in L^{\infty}(X,\mu)$,}\]
\[(\pi(\gamma)\xi)_{x}=\pi(x,\gamma^{-1}(x))\xi_{\gamma^{-1}(x)}, \textnormal{ for all $\gamma\in [\cR]$,$x\in X$, and $\xi=(\xi_{x})_{x}\in \int_{X}^{\oplus}\cH_{x}\,d\mu(x).$}\]
Because of the above discussion, if $(\Meas((\cH_{x})_{x\in X},\pi)$ is a unitary representation of $\cR$, and $\cH=\int_{X}^{\oplus}\cH_{x}\,d\mu(x)$ we will often refer to $\cH$ as a representation of $\cR$, instead of the pair $(\Meas((\cH_{x})_{x\in X},\pi)$.

A $*$-representation $\phi\colon L^{\infty}(X,\mu)\to B(\cH)$ is \emph{normal} if it is weak$^{*}$-WOT continuous.
Our goal in this section is to identify unitary representations of $\cR$ on with $*$-representations of $\C(\cR)$ which are normal on $L^{\infty}(X)$. The following lemma shows us that such $*$-representations of $\C(\cR)$ are automatically continuous on $[\cR]$, this will help us in later (see the proof of Proposition \ref{prop: rep correspond}) to reduce our understanding of representations of $\cR$ to $*$-representations of a subgroup of $[\cR]$ which generates the relation.

\begin{lem}\label{lem: implied continuity}
Let $\cH$ be a Hilbert space and $\pi\colon \C(\cR)\to B(\cH)$ a $*$-representation so that $\pi|_{L^{\infty}(X)}$ is normal. Then $\pi|_{[[\cR]]}$ is continuous if we give $[[\cR]]$ the metric 
\[d(\gamma,\sigma)=\mu((\dom(\gamma)\cup \dom(\sigma))\setminus\{x\in \dom(\gamma)\cap \dom(\sigma):\gamma(x)=\sigma(x)\}),\]
and $B(\cH)$ the strong operator topology.
\end{lem}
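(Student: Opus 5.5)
The plan is to reduce strong continuity on $[[\cR]]$ to normality of $\pi$ on projections of $L^{\infty}(X)$, using an algebraic identity in $\C(\cR)$. First I will make precise how partial bijections sit inside $\C(\cR)$. Extending the embedding of $[\cR]$, regard $\gamma\in[[\cR]]$ as $1_{\gr(\gamma^{-1})}\in\C(\cR)$. This is a partial isometry with $\gamma^{*}\gamma=1_{\dom(\gamma)}$.

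Next, check from the convolution formula that right multiplication by $1_{A}$, for $A\subseteq X$ measurable, restricts the domain:
\[\gamma 1_{A}=\gamma|_{A\cap\dom(\gamma)}.\]
Indeed, $(\gamma 1_{A})(x,y)=\gamma(x,y)1_{A}(y)$. Given $\gamma,\sigma\in[[\cR]]$, set
\[A=\{x\in\dom(\gamma)\cap\dom(\sigma):\gamma(x)=\sigma(x)\},\qquad B=(\dom(\gamma)\cup\dom(\sigma))\setminus A,\]
so that $d(\gamma,\sigma)=\mu(B)$. Then $\gamma=\gamma1_{A}+\gamma1_{B}$ and $\sigma=\sigma1_{A}+\sigma1_{B}$, since $\gamma 1_{X\setminus(A\cup B)}=0$ and likewise for $\sigma$. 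Moreover $\gamma1_{A}=\sigma1_{A}$ as elements of $\C(\cR)$, because they are the same partial bijection. Hence
\[\gamma-\sigma=\gamma1_{B}-\sigma1_{B}.\]

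Applying the $*$-homomorphism $\pi$ and using that $\pi(\gamma),\pi(\sigma)$ are contractions (as $\gamma^{*}\gamma\le 1$ in the $C^{*}$-sense, via $\pi(\gamma)^{*}\pi(\gamma)=\pi(1_{\dom\gamma})$, a projection), we get for $\xi\in\cH$:
\[\|\pi(\gamma)\xi-\pi(\sigma)\xi\|\le 2\|\pi(1_{B})\xi\|=2\ip{\pi(1_{B})\xi,\xi}^{1/2}.\]
Here the last equality uses that $\pi(1_{B})$ is a projection.

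It therefore suffices to show the following: if $\mu(B_{n})\to 0$, then $\ip{\pi(1_{B_{n}})\xi,\xi}\to0$. Metric spaces allow a sequential argument, and fixing $\gamma$ and letting $\sigma_{n}\to\gamma$ gives exactly such sets $B_{n}$. For any $g\in L^{1}(X)$ we have $\left|\int 1_{B_{n}}g\,d\mu\right|\le\int_{B_{n}}|g|\,d\mu\to0$ by absolute continuity of the integral, so $1_{B_{n}}\to0$ weak$^{*}$ in $L^{\infty}(X)$. Normality then gives $\pi(1_{B_{n}})\to0$ in WOT, which is exactly what is needed.

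The main obstacle is only bookkeeping: verifying the identity $\gamma1_{A}=\sigma1_{A}$ and the decomposition with the correct side conventions (right versus left multiplication, and $\gr(\gamma^{-1})$), and handling measure-zero ambiguities. No analytic difficulty remains once the reduction to projections is in place.
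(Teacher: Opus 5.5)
Your proof is correct and follows essentially the same route as the paper: split $\gamma-\sigma$ along the set where the two partial bijections agree, bound the remainder by $2\|\pi(1_{B})\xi\|$ using that $\pi(\gamma),\pi(\sigma)$ are contractions, and conclude by normality since $1_{B_n}\to 0$ weak$^{*}$. The one difference is that you take the agreement set on the domain side (the paper uses range-side sets $E_{n}$). This makes $\gamma 1_{A}=\sigma 1_{A}$ literally consistent with the convention $\gamma\mapsto 1_{\gr(\gamma^{-1})}$ and removes the change-of-variables step the paper needs.
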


\begin{proof}
Suppose $\gamma_{n}\in [[\cR]]$ is a sequence and $\gamma\in [[\cR]]$ has $d(\gamma_{n},\gamma)\to_{n\to\infty}0$. Set $E_{n}=\{x\in \ran(\gamma)\cap \ran(\gamma_{n}):\gamma^{-1}(x)=\gamma_{n}^{-1}(x)\}$.  Then $\gamma 1_{E_{n}}=\gamma_{n} 1_{E_{n}}$, so viewing $\gamma,\gamma_{n}\in \C(\cR)$ we have:
\[\gamma-\gamma_{n}=\gamma 1_{\ran(\gamma)\setminus E_{n}}+\gamma_{n} 1_{\ran(\gamma_{n})\setminus E_{n}}.\]
Thus for any $\xi\in \cH$, we have that 
\[\|(\pi(\gamma)-\pi(\gamma_{n}))\xi\|\leq \|\pi(1_{\ran(\gamma)\setminus E_{n}})\xi\|+\|\pi(1_{\ran(\gamma_{n})\setminus E_{n}})\xi\|=\ip{\pi(1_{\ran(\gamma)\setminus E_{n}})\xi,\xi}^{1/2}+\ip{\pi(1_{\ran(\gamma_{n})\setminus E_{n}})\xi,\xi}^{1/2}.\]
Using the change of variables $x=\gamma(y)$ we see that 
\[\mu(\ran(\gamma)\setminus E_{n})=\mu(\dom(\gamma)\setminus \{x\in\dom(\gamma)\cap \dom(\gamma_{n}):\gamma(x)=\gamma_{n}(x)\})\to_{n\to\infty}0.\]
Similarly, $\mu(\ran(\gamma_{n})\setminus E_{n})\to_{n\to\infty}0$. Thus $1_{\ran(\gamma)\setminus E_{n}},1_{\ran(\gamma_{n})\setminus E_{n}}\to_{n\to\infty}0$ in the weak$^{*}$-topology and so the above shows that $\|(\pi(\gamma)-\pi(\gamma_{n}))\xi\|\to_{n\to\infty}0$.

\end{proof}

To show that representations of $\C(\cR)$ whose restriction to $L^{\infty}(X)$ are normal, it will be helpful to go through representations of a related algebra.

Let $(X,\mu)$ be a standard probability space and $G\actson (X,\mu)$ a probability measure-preserving action with $G$ countable. We let $L^{\infty}(X)\rtimes_{\text{alg}}G$ be the collection of all formal sums $f=\sum_{g\in G}f_{g}g$ where $\{g\in G:f_{g}\ne 0\}$ is finite. We equip $L^{\infty}(X)\rtimes_{\text{alg}}G$ with the natural structure as a vector space over $\C$.  For $f=\sum_{g\in G}f_{g}g,k=\sum_{g\in G}k_{g}g$ we define
\[fk=\sum_{g}\left(\sum_{h}f_{h}\alpha_{h}(k_{h^{-1}g})\right)g,\]
\[f^{*}=\sum_{g}\alpha_{g}(\overline{f}_{g^{-1}})g.\]
It is direct to check that $L^{\infty}(X)\rtimes_{\text{alg}}G$ is a $*$-algebra with the above operations. 

Suppose $(\cH_{x})_{x\in X}$ is a measurable field of Hilbert spaces over a standard probability space $(X,\mu)$ and that $G$ is a countable group with $G\acston (X,\mu)$ by probability measure-preserving actions. Suppose for almost every $x\in X$ and all $g\in G$ we have a $c(g,x)\in \cU(\cH_{x},\cH_{gx})$ with the property that $x\mapsto \ip{c(g,x)\xi_{x},\eta_{gx}}$ is measurable whenever $(\xi_{x})_{x\in X},(\eta_{x})_{x\in X}\in \Meas((\cH_{x})_{x\in X})$.
We call $c$ a \emph{cocycle} if for almost every $x\in X$ we have $c(g,hx)c(h,x)=c(gh,x)$ for all $g,h\in F$.  

\begin{prop}\label{prop:cocycle intro}
Let $(X,\mu)$ be a standard probability space and $G\actson (X,\mu)$ a probability measure-preserving action with $G$ countable. If $\cH$ is a separable Hilbert space and $\pi\colon L^{\infty}(X)\rtimes_{\text{alg}}G\to B(\cH)$ is a $*$-homomorphism with $\pi|_{L^{\infty}(X)}$ being normal, then we may find a measurable field $(\cH_{x})_{x\in X}$ of Hilbert spaces, a unitary $U\in \cU\left(\cH,\int_{X}^{\oplus}\cH_{x}\,d\mu(x)\right)$ and a cocycle $c(g,x)\in \cU(\cH_{x},\cH_{gx})$ so that 
\[U\pi(f)U^{*}=\int_{X}^{\oplus}f(x)\,d\mu(x), \text{ for all $f\in L^{\infty}(X)$,}\]
\[(U\pi(g)U^{*}\xi)_{x}=c(g,g^{-1}x)\xi_{g^{-1}x}, \text{ almost everywhere, for all $g\in G,\xi=(\xi_{x})_{x\in X}\in \int_{x}^{\oplus}\cH_{x}\,d\mu(x)$.}\]
\end{prop}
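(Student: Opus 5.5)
The plan is to first disintegrate the normal representation of $L^{\infty}(X)$, and then show that the unitaries $\pi(g)$ are "decomposable up to the action," which yields the cocycle.

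\textbf{Disintegration.} Since $\cH$ is separable and $\pi|_{L^{\infty}(X)}$ is a normal unital $*$-representation, the direct integral decomposition theorem for normal representations of abelian von Neumann algebras (e.g.\ \cite[Section IV.8]{Taka}) applies. It gives a measurable field $(\cH_{x})_{x\in X}$ and a unitary $U\colon\cH\to\int_{X}^{\oplus}\cH_{x}\,d\mu(x)$ with $U\pi(f)U^{*}=\int_{X}^{\oplus}f(x)\,d\mu(x)$. This is the standard multiplicity decomposition: split $\cH$ into cyclic subspaces, each isomorphic to $L^{2}(X_{n},\mu)$ with $\pi(f)$ acting by multiplication. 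Here we use that the vector states $f\mapsto\langle\pi(f)\xi,\xi\rangle$ are normal, hence given by measures absolutely continuous with respect to $\mu$. We may then replace $\pi$ by $U\pi(\cdot)U^{*}$.

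\textbf{Covariance.} For $g\in G$ set $V_{g}=\pi(g)$; this is unitary because $g^{*}g=1=gg^{*}$ in $L^{\infty}(X)\rtimes_{\text{alg}}G$. It satisfies $V_{g}M_{f}V_{g}^{*}=M_{\alpha_{g}(f)}$, where $\alpha_{g}(f)(x)=f(g^{-1}x)$. Define the unitary
\[W_{g}\colon\int^{\oplus}\cH_{x}\to\int^{\oplus}\cH_{gx},\qquad (W_{g}\xi)_{x}=\xi_{g^{-1}x}.\]
More precisely, let $\cK^{g}$ be the field $x\mapsto\cH_{g^{-1}x}$, and let $T_{g}$ be the map sending $\xi$ to the section $x\mapsto\xi_{g^{-1}x}$. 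This is unitary onto $\int^{\oplus}\cK^{g}_{x}$ because $g$ preserves $\mu$, and it satisfies $T_{g}M_{f}T_{g}^{*}=M_{\alpha_{g}(f)}$. Then $T_{g}^{*}V_{g}$ commutes with every $M_{f}$, and hence is decomposable between the fields $(\cH_{x})$ and $(\cK^{g}_{x})$. This uses the standard fact that operators intertwining the diagonal algebras of two direct integrals are decomposable \cite[Section IV.8]{Taka}. Thus we can write $T_{g}^{*}V_{g}=\int^{\oplus}D_{g}(x)\,d\mu(x)$ with $D_{g}(x)\colon\cH_{x}\to\cH_{gx}$, after a reindexing. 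Unitarity of $V_{g}$ forces $D_{g}(x)$ to be unitary almost everywhere, by the norm formula and the same argument applied to the adjoint. Unwinding the definitions gives $(V_{g}\xi)_{x}=c(g,g^{-1}x)\xi_{g^{-1}x}$ with $c(g,y)\in\cU(\cH_{y},\cH_{gy})$. Measurability of $x\mapsto\langle c(g,x)\xi_{x},\eta_{gx}\rangle$ follows from the measurability of decomposable fields.

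\textbf{Cocycle identity.} Since $\pi(g)\pi(h)=\pi(gh)$, comparing the two decompositions gives $c(g,hx)c(h,x)=c(gh,x)$ for almost every $x$, for each fixed pair $g,h$. Countability of $G$ then lets us discard a single null set on which the identity holds for all pairs simultaneously.

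\textbf{Main obstacle.} The main obstacle is the decomposability step: making rigorous that an operator between $\int^{\oplus}\cH_{x}$ and the "translated" field, intertwining multiplication operators, is given by a measurable field of operators. The first approach is to reduce to the case of a single field. Partition $X$ by the dimension $n(x)=\dim\cH_{x}\in\{1,\dots,\infty\}$, which is $g$-invariant up to null sets by the intertwining. Trivialize each piece as $\ell^{2}_{n}$ using a measurable orthonormal frame obtained by Gram--Schmidt applied to the sequence $\xi^{(n)}$. This identifies both fields with constant fields, where the claim is the classical statement that the commutant of $L^{\infty}(X)\otimes 1$ on $L^{2}(X)\otimes\ell^{2}_{n}$ is $L^{\infty}(X,B(\ell^{2}_{n}))$.
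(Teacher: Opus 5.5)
Your proposal is correct and follows essentially the paper's route. You disintegrate $\pi|_{L^{\infty}(X)}$, show $x\mapsto\dim\cH_x$ is $G$-invariant, and trivialize the field on each level set. You then use that operators commuting with the diagonal algebra are decomposable to define $c(g,\cdot)$; the paper's $V_{\alpha_g}^{*}\pi(g)$ is your $T_g^{*}V_g$ in the constant-field case. The cocycle identity is then read off from $\pi(g)\pi(h)=\pi(gh)$ together with countability of $G$.

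Two small points. First, for non-constant fields $T_g^{*}V_g$ is not well-typed; the intertwiner you want is $V_gT_g^{*}\colon\int^{\oplus}\cK^{g}_{x}\,d\mu(x)\to\int^{\oplus}\cH_{x}\,d\mu(x)$. Second, the $G$-invariance of $\dim\cH_x$ is where the paper does real work: it pushes an orthonormal frame supported on $\{x:\dim\cH_x\geq n\}$ forward by $\pi(g)$ and checks that it stays orthonormal on the translated set. Your phrase ``by the intertwining'' should be expanded into that argument or backed by a citation of uniqueness of multiplicity.
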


\begin{proof}

First, note that there is a unique (modulo null sets) measurable $E\subseteq X$ so that $\ker(\pi|_{L^{\infty}(X)})=\{f\in L^{\infty}(X):f1_{X\setminus E}=f\}$. Indeed, fixing a countable dense sequence $(\xi_{n})_{n}$ the fact that $\pi|_{L^{\infty}(X)}$ is normal implies that there are $k_{n}\in L^{1}(X)$ with $\ip{\pi(f)\xi_{n},\xi_{n}}=\int fk_{n}\,d\mu(x)$. If we set $E=\bigcup_{n}\{x:k_{n}(x)\ne 0\}$, we see that $E$ has the desired property. By \cite[Theorem VIII.4.8]{Conway} we have that $\pi|_{L^{\infty}(E)}$ is isometric. 

We now claim that $\pi(L^{\infty}(X))$ is a von Neumann algebra. Indeed, by Kaplansky's density theorem (see \cite[Theorem 44.1]{ConwayOT}),
\[\Ball(\overline{\pi(L^{\infty}(X))}^{WOT})=\Ball(\overline{\pi(L^{\infty}(E))}^{WOT})=\overline{\Ball(\pi(L^{\infty}(E)))}^{WOT}.\]
But since  $\pi|_{L^{\infty}(E)}$ is isometric, $\Ball(\pi(L^{\infty}(E))=\pi(\Ball(L^{\infty}(E)))$ and $\Ball(L^{\infty}(E))$ is weak$^{*}$ compact, the weak$^{*}$-WOT continuity of $\pi|_{L^{\infty}(X)}$ forces 
\[\Ball(\overline{\pi(L^{\infty}(X))}^{WOT})=\Ball(\pi(L^{\infty}(E))).\]
Thus $\overline{\pi(L^{\infty}(X))}^{WOT}=\pi(L^{\infty}(E))=\pi(L^{\infty}(X)).$

Thus we may regard $\pi|_{L^{\infty}(E)}$ as an isomorphism between the von Neumann algebras $L^{\infty}(E)$ and $\pi(L^{\infty}(X))$. 
It then follows from \cite[Propostion 11 in Part 2, Chapter 3, Section 6]{DixmierW*} that we may assume  $\cH=\int_{E}^{\oplus}\cH_{x}\,d\mu(x)$ for some measurable field $(\cH_{x})_{x\in E}$ of Hilbert spaces, and that $\pi(f)=\int_{X}^{\oplus}f(x)\,d\mu(x)$ for all $f\in L^{\infty}(E)$. We set $\cH_{x}=0$ for $x\in X\setminus E$ so that we still have $\cH=\int_{E}^{\oplus}\cH_{x}\,d\mu(x)$, and $\pi(f)=\int_{X}^{\oplus}f(x)\,d\mu(x)$ for all $f\in L^{\infty}(X)$.  It remains to construct the cocycle $c$.

\emph{Claim: the function $x\mapsto \dim(\cH_{x})$ is almost every $G$-invariant.}
To prove the claim it suffices to show that for all $n\in \N$, the set $E_{n}=\{x\in X:\dim(\cH_{x})\geq n\}$ is $G$-invariant modulo null sets. To see this, note by \cite[Proposition 1 of Part II, Chapter 1, Section 4]{DixmierW*} we may find measurable vector fields $(e^{(j)}_{x})_{x\in X}\in \Meas((\cH_{x})_{x\in X})$ for $j=1,\cdots,n$ so that $\pi(1_{E_{n}})e^{(j)}=e^{(j)}$ and $\ip{e^{(j)}_{x},e^{(k)}_{x}}=\delta_{j=k}$ for all $x\in E_{n}$. For $g\in G$, write $\pi(g)e^{(j)}=(\xi^{(j)}_{x})_{x\in X}\in \Meas((\cH_{x})_{x\in x})$. Since $\pi$ is a $*$-homomorphism, we have that $\pi(1_{gE_{n}})\xi^{(j)}=\xi^{(j)}$ for all $j=1,\cdots,n$ and for all $f\in L^{\infty}(X)$,
\begin{align*}
 \int f(x)\ip{\xi^{(j}_{x},\xi^{(k)}_{x}}\,d\mu(x)=\ip{\pi(f)\xi^{(j)},\xi^{(k)}}=\ip{\pi(\alpha_{g^{-1}}(f))e^{(j)},e^{(k)}}&=\delta_{j=k}\int_{E_{n}}f(gx)\,d\mu(x)\\
 &=\delta_{j=k}\int_{gE_{n}}f\,d\mu.   
\end{align*}
In particular, applying this for $f=1_{F}$ where $F$ ranges over all measurable subsets of $gE_{n}$ we see that $\ip{\xi^{(j)}_{x},\xi^{(k)}_{x}}=\delta_{j=k}$ for almost every $x\in gE_{n}$. Thus for almost every $x\in gE_{n}$ we have that $\dim(\cH_{x})\geq n$. Thus $\mu(gE_{n}\setminus E_{n})=0$ for all $g\in G$, proving the claim.

Having shown the claim, we now prove the theorem. We may partition $X$ into the countably many $G$-invariant sets $X_{n}=\{x\in X:\dim(\cH_{x})=n\}$ for $n\in \N\cup\{\infty\}$. If we define the appropriate cocycle on each $X_{n}$, then by $G$-invariance of each $X_{n}$ this produces the appropriate cocycle on $X$. So we may assume that $\dim(\cH_{x})$ is almost everywhere constant. In this case, by \cite[Proposition 3 of Part II, Chapter 1, Section 4]{DixmierW*} we may assume that there is a fixed Hilbert space $\widetilde{\cH}$ with $\cH_{x}=\widetilde{\cH}$, so $\cH=L^{2}(X,\widetilde{\cH})$. For $g\in G$, consider the operator $V_{\alpha_{g}}\in B(\cH)$ given by $(V_{\alpha_{g}}k)(x)=k(g^{-1}x)$. The since $\pi(g)\pi(f)\pi(g^{-1})=\pi(\alpha_{g}(f))$ for all $f\in L^{\infty}(X)$, we have that $V_{\alpha_{g}}^{*}\pi(g)$ commutes with $\pi(L^{\infty}(X))$. Hence, by \cite[Theorem 1 of Part II, Chapter 2, Section 5]{DixmierW*} we may find a measurable assignment $x\mapsto U_{g,x}\in \cU(\widetilde{\cH})$ so that $V_{\alpha_{g}}^{*}\pi(g)=\int_{X}^{\oplus}U_{g,x}\,d\mu(x):=U_{g}$. Then, for $k\in L^{2}(X,\widetilde{\cH})$
\[(\pi(g)k)(x)=(V_{\alpha_{g}}U_{g}k)(x)=(U_{g}k)(g^{-1}x)=U_{g,g^{-1}x}k(g^{-1}x).\]
Set $c(g,x)=U_{g,x}$. Then the fact that $\pi(g)\pi(h)=\pi(gh)$ and $(\pi(g)k)(x)=c(g,g^{-1}x)k(g^{-1}x)$ almost everywhere for all $g\in G,k\in L^{2}(X,\widetilde{\cH})$ forces $c(gh,x)=c(g,hx)c(h,x)$ almost everywhere for all $g,h \in G$ (e.g. by checking on $k_{j}(x)=\zeta^{(j)}$ where $(\zeta^{(j)})_{j\in J}$ is a dense sequence in $\widetilde{\cH}$).

\end{proof}

We now show that the construction described after Definition \ref{defn: groupoid ring} gives a complete description (modulo isomorphism) of all $*$-representations of $\C(\cR)$ whose restriction to $L^{\infty}(X)$ is normal.

The following is likely well known, but unable to find an appropriate reference, we include a proof for completeness.

\begin{prop}\label{prop: rep correspond}
Let $(\cR,X,\mu)$ be a discrete, probability measure-preserving equivalence relation. 
\begin{enumerate}[(i)]
\item \label{item:from unitary reps of R to *-reps} If $((\cH_{x})_{x\in X},\pi)$ is a unitary representation of $\cR$, then $\pi\colon \C(\cR)\to B(\cH)$ given by $(\pi(k)\xi)_{x}=\sum_{y}k(x,y)\pi(x,y)\xi_{y}$ if $\xi=(\xi_{x})_{x}\in \int_{X}^{\oplus}\cH_{x}\,d\mu(x)$ is a $*$-representation whose restriction to $L^{\infty}(X)$ is normal.
\item \label{item: from *-reps to unitary reps } Suppose $\rho\colon \C(\cR)\to B(\cK)$ is a $*$-representation on a Hilbert space $\cK$ so that $\rho|_{L^{\infty}(X)}$ is normal. Then there is a unitary representation $((\cH_{x})_{x\in X},\pi)$ of $\cR$ and a $U\in \cU(\cH,\cK)$ (where $\cH=\int_{X}^{\oplus}\cH_{x}\,d\mu(x)$) which satisfies 
\[\rho(f)=U\pi(f)U^{*} \text{ for all $f\in \C(\cR)$.}\]
\end{enumerate}
\end{prop}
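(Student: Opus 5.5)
Part (\ref{item:from unitary reps of R to *-reps}) is essentially the computation carried out after Definition \ref{defn: groupoid ring}. There it was shown that $\pi(k)$ is bounded with $\|\pi(k)\|\leq C$. I would verify that $\pi(k_{1}*k_{2})=\pi(k_{1})\pi(k_{2})$ by expanding the double sum and using the cocycle identity $\pi(x,z)\pi(z,y)=\pi(x,y)$, which holds almost everywhere. Absolute convergence, guaranteed by the row and column bounds, justifies the rearrangement. To check $\pi(k^{*})=\pi(k)^{*}$, I would compute $\ip{\pi(k)\xi,\eta}$ as an integral over $\cR$ against $\mu_{\cR}$, then use that the flip $(x,y)\mapsto(y,x)$ preserves $\mu_{\cR}$ together with $\pi(x,y)^{*}=\pi(y,x)$. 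Measurability of the resulting field comes from the first axiom of Definition \ref{defn:rep of a relation}. Normality on $L^{\infty}(X)$ is immediate: $\pi(f)$ is the diagonal operator, so $\ip{\pi(f)\xi,\eta}=\int f\ip{\xi_{x},\eta_{x}}\,d\mu$ with $\ip{\xi_{x},\eta_{x}}\in L^{1}$.

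For part (\ref{item: from *-reps to unitary reps }) I would reduce to Proposition \ref{prop:cocycle intro}. By Feldman--Moore, choose a countable group $G$ and a Borel action $G\actson X$, which we may take measure preserving, with $\cR=\cR_{G,X}$ almost everywhere. Each $g\in G$ then lies in $[\cR]\subseteq\C(\cR)$, giving a $*$-homomorphism $L^{\infty}(X)\rtimes_{\text{alg}}G\to\C(\cR)$, $\sum f_{g}g\mapsto\sum f_{g}g$. This map respects the relation $gfg^{-1}=\alpha_{g}(f)$. Composing with $\rho$ and applying Proposition \ref{prop:cocycle intro} yields a field $(\cH_{x})$, a unitary $U$ and a cocycle $c$. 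Separability of $\cK$ should follow from normality and standardness, or can be assumed by decomposing into cyclic pieces.

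Next I would define $\pi(x,y)$. For $(x,y)\in\cR$ pick $g$ with $x=gy$ and set $\pi(x,y)=c(g,y)$. The main obstacle is well-definedness when the action is not free: if $gy=hy$, we need $c(g,y)=c(h,y)$, i.e.\ $c(h^{-1}g,y)=\id$ whenever $h^{-1}g$ fixes $y$. I would handle this inside $\C(\cR)$. For $k\in G$, let $F_{k}=\Fix(k)$. Then $k1_{F_{k}}=1_{F_{k}}$ as elements of $\C(\cR)$, since both equal the indicator of the diagonal over $F_{k}$. Hence $\rho(k)\rho(1_{F_{k}})=\rho(1_{F_{k}})$. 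Translated through $U$, this says $c(k,x)\xi_{x}=\xi_{x}$ for almost every $x\in F_{k}$ and every measurable $\xi$, so $c(k,\cdot)=\id$ almost everywhere on $F_{k}$. Since $G$ is countable, we can discard a single null set. With well-definedness in hand, the cocycle identity gives $\pi(x,y)\pi(y,z)=\pi(x,z)$ almost everywhere. Measurability of $(x,y)\mapsto\ip{\pi(x,y)\xi_{y},\eta_{x}}$ follows by partitioning $\cR$ into the countably many Borel pieces $\{(gy,y)\}$, choosing the least $g$ in a fixed enumeration, and using measurability of $c$.

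Finally I would show $\rho(f)=U\pi(f)U^{*}$ for all $f\in\C(\cR)$, with $U$ oriented as in the statement. The two sides agree on $L^{\infty}(X)$ and on each $g\in G$ by construction, so they agree on the $*$-algebra $A$ generated by $L^{\infty}(X)$ and $G$. To pass to all of $\C(\cR)$, write $f=\sum_{\gamma\in F}f_{\gamma}\gamma$ using \cite[Lemma 3.3]{SauerBetti}, which reduces the task to $\gamma\in[\cR]$. Each such $\gamma$ is a $d$-limit of elements of $[[\cR]]$ of the form $\sum_{n\leq N}g_{n}1_{A_{n}}\in A$, obtained by partitioning $X$ according to which $g$ agrees with $\gamma$. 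By Lemma \ref{lem: implied continuity}, applied to both $\rho$ and the representation from part (\ref{item:from unitary reps of R to *-reps}), which is normal, both sides are SOT-continuous on $[[\cR]]$. They therefore agree on $\gamma$, which completes the proof.
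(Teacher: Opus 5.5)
Your proposal is correct and follows the paper's proof essentially step for step. The shared steps are the Feldman--Moore reduction to Proposition \ref{prop:cocycle intro}; the identity $g1_{E}=1_{E}$ for $E\subseteq \Fix(g)$, which kills the cocycle on stabilizers so that $\pi(x,y)=c(g,y)$ is well defined; and the extension from $L^{\infty}(X)\cup G$ to all of $[\cR]$ via Lemma \ref{lem: implied continuity} and \cite[Lemma 3.3]{SauerBetti}. One correction to your aside: separability of $\cK$ does not follow from normality (take $\rho(k)=\alpha(k)\otimes 1$ on $L^{2}(X)\otimes\ell^{2}(I)$ with $I$ uncountable). Since a direct integral of separable fibers over a standard space is separable, $\cK$ must be assumed separable outright, as the paper implicitly does when it invokes Proposition \ref{prop:cocycle intro}.
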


\begin{proof}

(\ref{item:from unitary reps of R to *-reps}): We have already remarked that $\pi$ is a $*$-homomorphism. To see that $\pi|_{L^{\infty}(X)}$ is normal, it suffices to note that 
\[\ip{\pi(f)\xi,\eta}=\int_{X}f(x)\ip{\xi_{x},\eta_{x}}\,d\mu(x) \textnormal{ for all $f\in L^{\infty}(X)$},\]
and the function $x\mapsto \ip{\xi_{x},\eta_{x}}$ is in $L^{1}$ due to the Cauchy-Schwarz inequality. 

(\ref{item: from *-reps to unitary reps }):By \cite[Theorem 1]{FelMooreI} we may find a  countable group $G\subseteq [\cR]$ so that each element of $G$ is Borel, and which satisfies $\cR=\cR_{G,X}$. We then have a natural $*$-homomorphism $q\colon L^{\infty}(X)\rtimes_{\text{alg}}G\to \C(\cR)$ which is the identity when restricted to $L^{\infty}(X)$, $G$. By Proposition \ref{prop:cocycle intro}, it then follows that we may assume that we have a measurable field $(\cH_{x})_{x\in X}$ of Hilbert spaces,  that $\cK=\cH=\int_{X}^{\oplus}\cH_{x}$, and that there is a cocycle $c(g,x)\in \cU(\cH_{x},\cH_{gx})$ so that:
\begin{itemize}
\item $\pi(f)=\int_{X}^{\oplus} f(x)\,d\mu(x)$ for all $f\in L^{\infty}(X)$.
    \item $(\pi(q(g))\xi)_{x}=c(g,g^{-1}x)\xi_{g^{-1}x}$ for all $g\in G$.
\end{itemize}

\emph{Claim. For almost every $x\in X$ we have that $c(g,x)=c(h,x)$ for all $g,h\in G$ with $gx=hx$.}
By the cocycle condition, for almost every $x$ we have that if $gx=hx$, then
\[c(g,x)^{-1}c(h,x)=c(g^{-1},gx)c(h,x)=c(g^{-1},hx)c(h,x)=c(g^{-1}h,x).\]
So it suffices to show that for almost every $x\in X$ we have that $c(g,x)=\id$ for all $g\in \Stab(x)$. By the countability of $G$, it then suffices to show that $c(g,x)=x$ for almost every $x\in \Fix(g)$. Fix a measurable $E\subseteq \Fix(g)$. For $\xi=(\xi_{x})_{x},\zeta=(\zeta_{x})_{x}\in \int_{X}^{\oplus}\cH_{x}\,d\mu(x)$ we have
\begin{align*}
\int_{E}\ip{c(g,x)\xi_{x},\zeta_{x}}\,d\mu(x)=\int_{E}\ip{c(g,g^{-1}x)\xi_{g^{-1}x},\zeta_{x}}\,d\mu(x)=\ip{\pi(g)\xi,\pi(1_{E})\zeta}&=\ip{\pi(1_{E}g)\xi,\zeta}\\
&=\ip{\pi(1_{E})\xi,\zeta}\\
&=\int_{E}\ip{\xi_{x},\zeta_{x}}\,d\mu(x),
\end{align*}
where the first equality uses that $x\in \Fix(g)$ implies $g^{-1}x=x$ and the second-to-last equality uses that $1_{E}g=1_{E}$ in $\C(\cR)$ whenever $E\subseteq \Fix(g)$ is measurable. Since this is true for all $E\subseteq \Fix(g)$ measurable, we conclude that $\ip{c(g,x)\xi_{x},\zeta_{x}}=\ip{\xi_{x},\zeta_{x}}$ for almost every $x\in \Fix(g)$. Letting $\xi,\zeta$ run over a family of measurable vector fields which fiberwise generate $\cH_{x}$ for almost every $x$, we see that $c(g,x)=\id$ for almost every $x\in \Fix(g)$. This proves the claim.

Having proven the claim, it follows that for almost every $y\in X$, and for all $x\in[y]_{\cR}$ we have a well-defined unitary $\pi(x,y)\in \cU(\cH_{x},\cH_{y})$ given by $\pi(x,y)=c(g,y)$ where $gy=x$. It is then direct to show that $\pi$ is a unitary representation of $\cR$. We may thus define a $*$-representation $\widetilde{\pi}\colon \C(\cR)\to B(\cH)$ by $(\widetilde{\pi}(k)\xi)_{x}=\sum_{y}k(x,y)\pi(x,y)\xi_{y}$ and we have that $\widetilde{\pi}|_{L^{\infty}(X)}$ is weak$^{*}$-continuous. Moreover, $\widetilde{\pi}(k)=\pi(k)$ for $k\in L^{\infty}(X)\cup G$, by design. For arbitrary $\gamma\in [\cR]$, we may choose sets $(E_{g})_{g\in G}$ which partition $X$ modulo null sets and satisfy $E_{g}\subseteq\{x\in X:\gamma(x)=gx\}$. Write $G=\bigcup_{n}F_{n}$ where $F_{n}\subseteq F_{n+1}$ are nonempty, finite subsets of $G$, and set $\gamma_{n}=\gamma|_{\bigcup_{g\in F_{n}}E_{g}}$. Then $\gamma_{n}=\sum_{g\in F_{n}}g1_{E_{g}}$ in $\C(\cR)$ and by Lemma \ref{lem: implied continuity} we have that $\pi(\gamma_{n})\to\pi(\gamma)$, $\widetilde{\pi}(\gamma_{n})\to \widetilde{\pi}(\gamma)$ in the strong operator topology. Since $\pi(\gamma_{n})=\widetilde{\pi}(\gamma_{n})$ for every $n$, and $\gamma$ is arbitrary, we conclude that $\pi=\widetilde{\pi}$ by \cite[Lemma 3.3]{SauerBetti}.

\end{proof}

Motivated by the above, we define $\|\cdot\|$ on $\C(\cR)$ by saying that 
\[\|k\|=\sup\{\|\pi(k)\|:\pi\colon \C(\cR)\to B(\cH) \text{ is a $*$-representation so that $\pi|_{L^{\infty}(X)}$ is normal}\}.\]
The discussion following Definition \ref{defn: groupoid ring} shows that $\|k\|<+\infty$ for every $k\in \C(\cR)$. Moreover, letting $\lambda$ be as in Example \ref{ex: left/right reg rep} we have that $(\lambda(k)\xi)_{x}(y)=k(x,y)$ where $\xi=(\xi_{x})_{x}$ is the measurable vector field $\xi_{x}=\delta_{x}$. Thus $\|\cdot\|$ is indeed a norm on $\C(\cR)$.
We let $C^{*}(\cR)$ be the completion of $\C(\cR)$ under this norm. Note that $C^{*}(\cR)$ is a $C^{*}$-algebra.

\section{Weak containment for representations of $\cR$} \label{sec: weak containment}

\subsection{General facts about weak containment}
We recall here the notion of weak containment for representations of $C^{*}$-algebra.

\begin{thm}
Let $A$ be a unital $C^{*}$-algebra, $\cH_{j},j=1,2$ Hilbert spaces and $\pi_{j}\colon A\to B(\cH_{j}),j=1,2$ two $*$-representations. Then the following are equivalent 
\begin{enumerate}[(i)]
\item $\|\pi_{1}(a)\|\leq \|\pi_{2}(a)\|$ for all $a\in A$,
\item $\ker(\pi_{2})\subseteq \ker(\pi_{1})$,
\item for all $\xi\in \cH_{1}$, we have that 
\[\varphi_{\xi}\in \overline{\Span}^{wk^{*}}\{\varphi_{\zeta}:\zeta\in \cH_{2}\},\]
\item for all $\xi\in \cH_{1}$, we have that 
\[\varphi_{\xi}\in \overline{\co}^{wk^{*}}\{\varphi_{\zeta}:\zeta\in \cH_{2},\|\zeta\|=\|\xi\|\}.\]
\end{enumerate}
\end{thm}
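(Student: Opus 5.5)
The plan is to prove the cycle (i)$\Rightarrow$(ii)$\Rightarrow$(iii)$\Rightarrow$(iv)$\Rightarrow$(i), with one shortcut: (iii)$\Rightarrow$(i) is proved directly, and the normalization in (iv) is obtained from (ii) via irreducible representations. Throughout, $\varphi_{\xi}(a)=\ip{\pi(a)\xi,\xi}$ for the relevant representation.

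\textbf{(i)$\Rightarrow$(ii).} This is immediate: if $\pi_{2}(a)=0$ then $\|\pi_{1}(a)\|\leq 0$.

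\textbf{(ii)$\Rightarrow$(i).} Let $J=\ker(\pi_{2})$. Then $\pi_{2}$ factors through an injective, hence isometric, $*$-homomorphism $A/J\to B(\cH_{2})$. Since $J\subseteq\ker(\pi_{1})$, the map $\pi_{1}$ also factors through $A/J$, and $*$-homomorphisms are contractive. This gives $\|\pi_{1}(a)\|\leq\|a+J\|=\|\pi_{2}(a)\|$.

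\textbf{(iii)$\Rightarrow$(ii).} If $\pi_{2}(a)=0$, then $\varphi_{\zeta}(b^{*}a^{*}ab)=0$ for all $\zeta\in\cH_{2}$ and $b\in A$. By weak$^{*}$ density and linearity the same holds for $\varphi_{\xi}$ with $\xi\in\cH_{1}$. Hence $\|\pi_{1}(a)\pi_{1}(b)\xi\|=0$ for all $b$ and $\xi$, and since $A$ is unital, $\pi_{1}(a)=0$.

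\textbf{(ii)$\Rightarrow$(iv).} This is the substantive direction. We may assume $\|\xi\|=1$. Let $K=\overline{\co}^{wk^{*}}\{\varphi_{\zeta}:\|\zeta\|=1,\ \zeta\in\cH_{2}\}$, a weak$^{*}$ compact convex set of states. Suppose $\varphi_{\xi}\notin K$. Hahn--Banach separation in the weak$^{*}$ topology gives a self-adjoint $h\in A$ with
\[\varphi_{\xi}(h)>\sup_{\psi\in K}\psi(h)=\sup_{\|\zeta\|=1}\ip{\pi_{2}(h)\zeta,\zeta}=\max\operatorname{sp}(\pi_{2}(h)).\]
Self-adjointness can be arranged by taking real parts, since states are hermitian. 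Now use (ii): $\operatorname{sp}(\pi_{1}(h))\subseteq\operatorname{sp}_{A/J}(h+J)=\operatorname{sp}(\pi_{2}(h))$, because $\pi_{2}$ induces an injective map on $A/J$ and spectra are preserved under injective $*$-homomorphisms of $C^{*}$-algebras. Therefore $\varphi_{\xi}(h)\leq\max\operatorname{sp}(\pi_{1}(h))\leq\max\operatorname{sp}(\pi_{2}(h))$, a contradiction.

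\textbf{(iv)$\Rightarrow$(iii).} This is trivial.

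The main obstacle is the separation step. One has to check that the supremum over $K$ equals the top of the numerical range of $\pi_{2}(h)$, and that this coincides with $\max\operatorname{sp}(\pi_{2}(h))$; for self-adjoint operators this is standard. The spectral inclusion via $A/J$ completes the argument.
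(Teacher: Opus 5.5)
Your proof is correct. It differs from the paper mainly in being self-contained.

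\textbf{Comparison with the paper.} The paper proves (i)$\Leftrightarrow$(ii) exactly as you do: factor $\pi_{1}$ through $\pi_{2}(A)\cong A/\ker(\pi_{2})$ and use that $*$-homomorphisms are contractive. For the equivalence of (ii), (iii) and (iv), however, it simply cites \cite[Theorem 3.4.4 and Proposition 3.4.9]{DixmierC*}. You instead close the loop (ii)$\Rightarrow$(iv)$\Rightarrow$(iii)$\Rightarrow$(ii) directly:
\begin{enumerate}[(a)]
\item (iii)$\Rightarrow$(ii) follows because, for $a\in\ker(\pi_{2})$, the functionals vanishing at $a^{*}a$ form a weak$^{*}$-closed subspace.
\item (ii)$\Rightarrow$(iv) follows from weak$^{*}$ Hahn--Banach separation by a self-adjoint $h$, combined with the comparison $\operatorname{sp}(\pi_{1}(h))\subseteq\operatorname{sp}_{A/J}(h+J)=\operatorname{sp}(\pi_{2}(h))$.
\end{enumerate}
This is essentially the mechanism behind Dixmier's results. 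Your version lands directly on the normalized convex-hull form (iv), bypassing the intermediate statement about weak$^{*}$ limits of finite sums of positive forms and the separate normalization step. The paper's route is shorter on the page; yours makes the theorem independent of Dixmier's conventions at the cost of a few lines.

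\textbf{Implicit unitality.} Your spectral inclusion for $\pi_{1}$ and spectral permanence for $\pi_{2}$ both require the representations to be unital. Non-emptiness of your set $K$ for $\xi\neq 0$ then follows: if $\cH_{2}=0$, (ii) forces $\pi_{1}=0$, hence $\cH_{1}=0$. This is a defect of the statement, not of your argument. Take $\pi_{1}=0$ on some $\cH_{1}\neq 0$ and $\pi_{2}$ unital. Then (i), (ii) and (iii) hold. But (iv) fails, because every element of $\overline{\co}^{wk^{*}}\{\varphi_{\zeta}:\|\zeta\|=\|\xi\|\}$ takes the value $\|\xi\|^{2}$ at $1$, while $\varphi_{\xi}(1)=0$. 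So unital representations must be assumed, as the paper does explicitly in the proposition that follows. It would be worth stating this hypothesis where you use it.

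\textbf{Minor.} Your opening plan says the normalization in (iv) comes ``via irreducible representations,'' but the argument you actually give never uses them.
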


\begin{proof}
The equivalence of the last three is contained in (\cite[Theorem 3.4.4. and  Proposition 3.4.9]{DixmierC*}). The fact that the first implies the second is an exercise. To see that the second implies the first, note that our hypothesis implies that $\pi_{1}=\Psi\circ \pi_{2}$ for a $*$-homomorphism $\Psi\colon \pi_{2}(A)\to \pi_{1}(A)$. Since $*$-homomorphisms of $C^{*}$-algebras are contractive (see \cite[Proposition VIII.1.11 (d)]{Conway}) it follows that 
\[\|\pi_{1}(a)\|=\|\Psi(\pi_{2}(a))\|\leq \|\pi_{2}(a)\|, \text{ for all $a\in A$}.\]
\end{proof}

In the setup of the above Theorem, we say that $\pi_{1}$ is weakly contained in $\pi_{2}$ and write $\pi_{1}\preceq \pi_{2}$ if any (equivalently all) of the above conditions hold. We say that $\pi_{1},\pi_{2}$ are weakly equivalent if $\pi_{1}\preceq \pi_{2}$, $\pi_{2}\preceq \pi_{1}$.

The following allows one to reduce checking weak containment to checking it on a set of vectors whose $A$-translates densely span $\cH_{1}$, which is frequently helpful. This proposition is surely well known (see e.g. the proof of \cite[Lemma F.1.3]{BHV} for the special case of groups), but being unable to find a suitable reference we include a proof for completeness. 

\begin{prop}\label{prop: linear reduction weak cont}
Let $A$ be a unital $C^{*}$-algebra, $\cH_{j},j=1,2$ Hilbert spaces and $\pi_{j}\colon A\to B(\cH_{j}),j=1,2$ two unital $*$-representations. Set $V_{\pi_{j}}=\overline{\Span}^{wk^{*}}\{\varphi_{\zeta}:\zeta\in \cH_{j}\}\subseteq A^{*},j=1,2$. Then
\[\{\xi\in \cH_{1}:\varphi_{\xi}\in V_{\pi_{2}}\}\]
is a  closed, linear, $A$-invariant subspace of $\cH_{1}$.
\end{prop}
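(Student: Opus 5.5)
Write $\cK=\{\xi\in \cH_{1}:\varphi_{\xi}\in V_{\pi_{2}}\}$, where $\varphi_{\xi}(a)=\ip{\pi_{1}(a)\xi,\xi}$. We check the three properties separately: $A$-invariance, linearity, and closedness. Throughout we use that $V_{\pi_{2}}$ is a weak$^{*}$-closed linear subspace of $A^{*}$, invariant under the maps $\psi\mapsto \psi(b^{*}\,\cdot\,b)$ for $b\in A$. This invariance holds because $\varphi_{\zeta}(b^{*}ab)=\varphi_{\pi_{2}(b)\zeta}(a)$, and the map $\psi\mapsto \psi(b^{*}\cdot b)$ is linear and weak$^{*}$-continuous, so it preserves weak$^{*}$-closed spans.

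\emph{$A$-invariance.} For $\xi\in\cK$ and $b\in A$, we have $\varphi_{\pi_{1}(b)\xi}=\varphi_{\xi}(b^{*}\cdot b)$. This lies in $V_{\pi_{2}}$ by the invariance just noted.

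\emph{Closedness.} If $\xi_{n}\to\xi$ in norm, then $\|\varphi_{\xi_{n}}-\varphi_{\xi}\|\le (\|\xi_{n}\|+\|\xi\|)\|\xi_{n}-\xi\|\to 0$. Norm limits are weak$^{*}$ limits, so $\varphi_{\xi}\in V_{\pi_{2}}$. Closure under scalars is immediate, since $\varphi_{c\xi}=|c|^{2}\varphi_{\xi}$.

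\emph{Additivity.} This is the main obstacle, because $\varphi_{\xi+\eta}=\varphi_{\xi}+\varphi_{\eta}+\varphi_{\xi,\eta}+\varphi_{\eta,\xi}$, where $\varphi_{\xi,\eta}(a)=\ip{\pi_{1}(a)\xi,\eta}$. So it suffices to show that the off-diagonal coefficients $\varphi_{\xi,\eta}$ lie in $V_{\pi_{2}}$ whenever $\xi,\eta\in\cK$.

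To handle the off-diagonal terms, I would pass to kernels. Let $J=\{a\in A:\psi(a)=0 \text{ for all } \psi\in V_{\pi_{2}}\}$. Since $V_{\pi_{2}}$ is weak$^{*}$-closed, the bipolar theorem gives $V_{\pi_{2}}=J^{\perp}$. For $\zeta\in\cH_{2}$, the functional $\varphi_{\zeta}$ kills $a$ for all $\zeta$ exactly when $\ip{\pi_{2}(a)\zeta,\zeta}=0$ for all $\zeta$, which by polarization means $\pi_{2}(a)=0$. Hence $J=\ker\pi_{2}$, a closed two-sided ideal.

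Now take $\xi\in\cK$ and $a\in J$. Then $a^{*}a\in J$, so $\|\pi_{1}(a)\xi\|^{2}=\varphi_{\xi}(a^{*}a)=0$. Thus $\xi\in\cK$ exactly when $\pi_{1}(J)\xi=0$; the converse holds because $\pi_{1}(J)\xi=0$ forces $\varphi_{\xi}$ to vanish on $J$, i.e. $\varphi_{\xi}\in J^{\perp}=V_{\pi_{2}}$. This reformulation shows at once that $\cK$ is a linear subspace, since the condition $\pi_{1}(J)\xi=0$ is linear in $\xi$. The same argument also makes the closedness and invariance steps immediate: the condition is closed in $\xi$, and $J$ is an ideal, so $\pi_{1}(J)\pi_{1}(b)\xi\subseteq\pi_{1}(J)\xi=0$.

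The delicate point is the identification $V_{\pi_{2}}=(\ker\pi_{2})^{\perp}$ via Hahn–Banach and the bipolar theorem. Once that is in place, the rest is routine.
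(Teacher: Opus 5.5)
Your proof is correct, but it handles the key step, additivity, by a different route than the paper.

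\emph{The paper's route.} It treats closedness, invariance and scaling as routine. For sums it uses an orthogonal decomposition. Given $\xi_{1},\xi_{2}$ in the set $W$, it writes $\xi_{2}=\xi_{2}'+\xi_{2}''$ with $\xi_{2}'\in\overline{\pi_{1}(A)\xi_{1}}$ and $\xi_{2}''\perp\overline{\pi_{1}(A)\xi_{1}}$. This cyclic subspace reduces $\pi_{1}$, so all cross terms vanish. Hence $\xi_{1}+\xi_{2}'\in W$ by closedness and invariance, and $\varphi_{\xi_{2}''}=\varphi_{\xi_{2}}-\varphi_{\xi_{2}'}\in V_{\pi_{2}}$. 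Finally $\varphi_{\xi_{1}+\xi_{2}}=\varphi_{\xi_{1}+\xi_{2}'}+\varphi_{\xi_{2}''}\in V_{\pi_{2}}$.

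\emph{Your route.} You use the bipolar theorem to identify $V_{\pi_{2}}=(\ker\pi_{2})^{\perp}$. You then show the set equals $\{\xi:\pi_{1}(\ker\pi_{2})\xi=0\}$, from which all three properties follow at once.

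\emph{What each buys.} Your argument gives more information: an explicit description of the subspace as the largest subrepresentation of $\pi_{1}$ whose kernel contains $\ker\pi_{2}$. This is essentially a vector-wise version of the equivalence (ii)$\Leftrightarrow$(iii) in the weak containment theorem the paper quotes from Dixmier. The cost is Hahn--Banach duality and an essential use of the weak$^{*}$-closedness of $V_{\pi_{2}}$. The paper's argument stays at the Hilbert-space level. It uses only that $V_{\pi_{2}}$ is a norm-closed linear subspace invariant under $\psi\mapsto\psi(b^{*}\cdot b)$. So it applies verbatim to variants such as the norm-closed span of the vector states of $\pi_{2}$, where your kernel description no longer holds.
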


\begin{proof}
Let $W\subseteq \cH$ be the set of $\xi\in \cH_{1}$ so that $\varphi_{\xi}\in V_{\pi_{2}}$. It is direct to show that $W$ is closed, $\pi_{1}(A)$ invariant and closed under scaling. It remains to show that $W$ is closed under sums. Let $\xi_{1},\xi_{2}\in W$. Write $\xi_{2}=\xi_{2}'+\xi_{2}''$ where $\xi_{2}'\in \overline{\pi_{1}(A)\xi_{1}}^{\|\cdot\|}$, and $\xi_{2}''\perp \overline{\pi_{1}(A)\xi_{1}}^{\|\cdot\|}.$ Then $\xi_{1}+\xi_{2}'\in \overline{\pi_{1}(A)\xi_{1}}$, and so $\xi_{1}+\xi_{2}'\in W$, since $\xi_{1}\in W$.  Moreover,
\[\varphi_{\xi_{2}''}=\varphi_{\xi_{2}}-\varphi_{\xi_{2}'}\in V_{\pi_{2}},\]
so $\xi_{2}''\in W$. Since
\[\varphi_{\xi_{1}+\xi_{2}}=\varphi_{\xi_{1}+\xi_{2}'}+\varphi_{\xi_{2}''},\]
it follows that $\xi_{1}+\xi_{2}\in W$. 
\end{proof}

For unitary representations of equivalence relations, there is a separate notion of weak containment.

\begin{defn}
Let $\cR$ be a discrete, probability measure-preserving, equivalence relation on a standard probability space $(X,\mu)$. Let $((\cH_{x})_{x\in X},\pi),((\cK_{x})_{x\in X},\rho)$ be two unitary representations of $\cR$. We say that $\pi\preceq \rho$ if for all measurable $Q\subseteq \cR$ with $\mu_{\cR}(Q)<+\infty$, and for all $\xi\in \Meas((\cH_{x})_{x\in X})$, and $\varepsilon>0$, there are $\zeta_{1},\cdots,\zeta_{k}\in \Meas((\cK_{x})_{x\in X})$ with
\[\mu_{\cR}\left(\left\{(x,y)\in Q:\left|\ip{\pi(x,y)\xi_{y},\xi_{x}}-\sum_{i=1}^{k}\ip{\rho(x,y)\zeta_{i,y},\zeta_{i,x}}\right|>\varepsilon\right\}\right)<\varepsilon.\]
\end{defn}

In Proposition \ref{prop: rep correspond}, we showed that unitary representations of $\cR$ correspond to representations of $\C(\cR)$ which are normal when restricted to $L^{\infty}(X)$. Hence unitary representations of $\cR$ correspond to certain representations of $C^{*}(\cR)$. We start this section by showing that the notions of weak containment as representations of $\cR$ and as representations of $C^{*}(\cR)$ coincide. 
For this, it will be helpful to check weak$^{*}$-convergence on $\C(\cR)$ rather than on all of $C^{*}(\cR)$. We will use the following proposition to do this. 

\begin{prop}\label{prop:dense set reduction}
Let $A$ be a unital $C^{*}$-algebra, $\cH_{j},j=1,2$ Hilbert spaces and let $\pi_{j}\colon A\to B(\cH_{j})$ be two $*$-homomorphisms. Suppose that are sets $B\subseteq A$ and $D\subseteq \cH_{1}$ so that $\overline{\Span}^{\|\cdot\|}(B)=A$ and $\overline{\Span}^{\|\cdot\|}(\pi_{1}(A)D)=\cH_{1}$. Assume that for all $\xi\in D$, there is a constant $C_{\xi}\geq 0$ so that for all $b_{1},\cdots,b_{k}\in B$ we have 
\[(\varphi_{\xi}(b_{j}))_{j=1}^{k}\in \overline{\co}\{(\varphi_{\zeta}(b_{j}))_{j=1}^{k}:\zeta\in \cH_{2},\|\zeta\|\leq C_{\xi}\}.\]
Then $\pi_{1}\preceq \pi_{2}$. 

\end{prop}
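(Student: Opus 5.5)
We reduce to the characterization of weak containment via spans of vector states in the Theorem above, combined with Proposition \ref{prop: linear reduction weak cont}. Set $V_{\pi_{2}}=\overline{\Span}^{wk^{*}}\{\varphi_{\zeta}:\zeta\in\cH_{2}\}$ and $W=\{\xi\in\cH_{1}:\varphi_{\xi}\in V_{\pi_{2}}\}$. By Proposition \ref{prop: linear reduction weak cont}, $W$ is a closed, linear, $\pi_{1}(A)$-invariant subspace. So if $D\subseteq W$, then $\pi_{1}(A)D\subseteq W$. Hence $\overline{\Span}^{\|\cdot\|}(\pi_{1}(A)D)=\cH_{1}\subseteq W$, and condition (iii) of the Theorem gives $\pi_{1}\preceq\pi_{2}$. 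The whole proof therefore reduces to showing $\varphi_{\xi}\in V_{\pi_{2}}$ for each $\xi\in D$. In fact we aim for the stronger statement
\[\varphi_{\xi}\in K_{\xi}:=\overline{\co}^{wk^{*}}\{\varphi_{\zeta}:\zeta\in\cH_{2},\|\zeta\|\leq C_{\xi}\}.\]

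Fix $\xi\in D$. The first step is to note that every functional in $\co\{\varphi_{\zeta}:\|\zeta\|\leq C_{\xi}\}$ is positive with norm $\varphi(1)\leq C_{\xi}^{2}$. These functionals lie in the ball of radius $C_{\xi}^{2}$ in $A^{*}$, which is weak$^{*}$ compact by Banach--Alaoglu, so $K_{\xi}$ is a weak$^{*}$-closed, norm-bounded set. The hypothesis says: for every finite $b_{1},\dots,b_{k}\in B$ and every $\varepsilon>0$, there is $\psi\in\co\{\varphi_{\zeta}:\|\zeta\|\leq C_{\xi}\}$ with $|\psi(b_{j})-\varphi_{\xi}(b_{j})|<\varepsilon$ for all $j$. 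Here the closure in $\C^{k}$ is the Euclidean closure, and the convex hull of the image equals the image of the convex hull by linearity of evaluation. Indexing by pairs $(F,\varepsilon)$ with $F\subseteq B$ finite gives a net $\psi_{F,\varepsilon}$ in the bounded set above with $\psi_{F,\varepsilon}(b)\to\varphi_{\xi}(b)$ for every $b\in B$, and hence for every $b\in\Span(B)$.

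The second step, which is the main point to get right, upgrades convergence on $\Span(B)$ to weak$^{*}$ convergence on all of $A$. We use the uniform bound $\|\psi_{F,\varepsilon}\|\leq C_{\xi}^{2}$ together with $\|\varphi_{\xi}\|=\|\xi\|^{2}$. Given $a\in A$ and $\delta>0$, choose $b\in\Span(B)$ with $\|a-b\|<\delta$. Then
\[|\psi(a)-\varphi_{\xi}(a)|\leq (C_{\xi}^{2}+\|\xi\|^{2})\delta+|\psi(b)-\varphi_{\xi}(b)|.\]
So $\psi_{F,\varepsilon}\to\varphi_{\xi}$ weak$^{*}$, and therefore $\varphi_{\xi}\in K_{\xi}\subseteq V_{\pi_{2}}$. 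If one prefers, compactness of the ball can instead be used to pass to a weak$^{*}$-convergent subnet; its limit agrees with $\varphi_{\xi}$ on the norm-dense set $\Span(B)$, and continuity then identifies the limit as $\varphi_{\xi}$.

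Combining the two steps, $D\subseteq W$, and so $\pi_{1}\preceq\pi_{2}$ by the first paragraph. Unitality of $\pi_{j}$ is not needed for the argument above. If Proposition \ref{prop: linear reduction weak cont} is invoked literally, one only needs that its proof uses invariance and linearity, and not the unit.
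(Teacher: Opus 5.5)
Your proof is correct and is essentially the paper's argument. You reduce to $\xi\in D$ via Proposition~\ref{prop: linear reduction weak cont}, then combine the uniform bound $\|\psi\|\leq C_{\xi}^{2}$ on the approximating functionals with norm-density of $\Span(B)$ to place $\varphi_{\xi}$ in the weak$^{*}$-closed convex hull. The paper only packages the density step differently, as closedness in $A^{k}$ of the set $\Upsilon_{k}$ of good $k$-tuples rather than as convergence of a net.

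One small correction to your closing remark: the proof of Proposition~\ref{prop: linear reduction weak cont} does use the unit, through $\xi_{1}\in\overline{\pi_{1}(A)\xi_{1}}$. Since $\varphi_{\xi}=\varphi_{\pi_{1}(1)\xi}$, running that proof with $\pi_{1}(1)\xi_{1},\pi_{1}(1)\xi_{2}$ in place of $\xi_{1},\xi_{2}$ removes the dependence.
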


\begin{proof}
By Proposition \ref{prop: linear reduction weak cont}, it suffices to show that for all $\xi\in D$ we have that 
\[\varphi_{\xi}\in \overline{\co}^{wk^{*}}\{\varphi_{\zeta}:\zeta\in \cH_{2},\|\zeta\|\leq C_{\xi}\}.\]
Set $K=\overline{\co}^{wk^{*}}\{\varphi_{\zeta}:\zeta\in \cH_{2},\|\zeta\|\leq C_{\xi}\}$. For $k\in \N$, let
\[\Upsilon_{k}=\{(x_{j})_{j=1}^{k}\in A^{k}:(\varphi_{\xi}(x_{j}))_{j=1}^{k}\in \overline{\{(\varphi(x_{j}))_{j=1}^{k}:\varphi\in K\}}\}.\]
It suffices to show that $\Upsilon_{k}=A^{k}$ for all $k\in \N$. 
Since $\|\varphi\|\leq C_{\xi}^{2}$ for all $\varphi\in K$, it follows that $\Upsilon_{k}$ is a closed subset of $A^{k}$ for each $k\in \N$.
Our assumptions imply that $\Upsilon_{k} \supseteq (\Span(B))^{k}$, and thus $\Upsilon_{k}\supseteq \overline{(\Span(B))^{k}}=A^{k}$. 
\end{proof}

If $((\cH_{x})_{x\in X},\pi)$ is a unitary representation of $\cR$, and $\xi\in \int_{X}^{\oplus}\cH_{x}\,d\mu(x)$, $\gamma\in [\cR]$, we define $\omega_{\xi,\gamma}^{\pi}\colon X\to \C$ by $\omega_{\xi,\gamma}^{\pi}(x)=\ip{\pi(x,\gamma^{-1}(x))\xi_{\gamma^{-1}(x)},\xi_{x}}$. By Cauchy-Schwartz, we see that $\omega_{\xi,\gamma}^{\pi}\in L^{1}(X)$.
We now prove that weak containment of unitary representations of $\cR$ coincides with weak containment of the induced representations of $C^{*}(\cR)$. We remark that our method follows the same strategy as \cite[Proposition 4.6]{floresharbourgroupoid}.

\begin{lem}
Let $((\cH_{x})_{x\in X},\pi),((\cK_{x})_{x\in X},\rho)$ be two unitary representations of $\cR$, and regard $\cH=\int_{X}^{\oplus}\cH_{x}\,d\mu(x),\cK=\int_{X}^{\oplus}\cK_{x}\,d\mu(x)$ as representations of $C^{*}(\cR)$.
Then $\cH \preceq \cK$ as representations of $\cR$ if and only if $\cH\preceq \cK$ as representations of $C^{*}(\cR)$.  
\end{lem}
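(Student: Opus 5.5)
The plan is to compute the vector states on the generators $f\gamma$ (with $f\in L^{\infty}(X)$, $\gamma\in[\cR]$) and translate both notions of weak containment into statements about the functions $\omega^{\pi}_{\xi,\gamma}$. For $\xi\in\cH$ we have $(\pi(\gamma)\xi)_{x}=\pi(x,\gamma^{-1}(x))\xi_{\gamma^{-1}(x)}$, so
\[\varphi_{\xi}(f\gamma)=\ip{\pi(f\gamma)\xi,\xi}=\int_{X}f(x)\,\omega^{\pi}_{\xi,\gamma}(x)\,d\mu(x).\]
Every finite measure $Q\subseteq\cR$ is, up to small measure, contained in a finite union of graphs $\gr(\gamma^{-1})$ with $\gamma\in[\cR]$ (by \cite[Theorem 1]{FelMooreI}, $\cR$ is covered by countably many graphs). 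Hence the $\cR$-definition says: for every finite $F\subseteq[\cR]$ and $\varepsilon>0$ there are $\zeta_{1},\dots,\zeta_{k}\in\cK$ with $\omega^{\pi}_{\xi,\gamma}\approx\sum_{i}\omega^{\rho}_{\zeta_{i},\gamma}$ in measure for $\gamma\in F$. One point to handle is that the definition allows $\xi$ merely measurable, not square integrable. I would reduce to bounded $\xi$ by cutting down with $1_{\{\|\xi_{x}\|\le M\}}$; this changes things only on small sets, so it is harmless for convergence in measure.

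($\Rightarrow$) I would apply Proposition \ref{prop:dense set reduction} with $B=\{f\gamma\}$, whose span is $\C(\cR)$ and hence dense, and with $D$ the set of $\xi\in\cH$ with $\|\xi_{x}\|\le1$ a.e. These $\xi$ span a dense subspace. Given $b_{j}=f_{j}\gamma_{j}$, I would produce $\zeta_{i}$ with $\omega^{\pi}_{\xi,\gamma_{j}}\approx\sum_{i}\omega^{\rho}_{\zeta_{i},\gamma_{j}}$ in measure. The main obstacle is upgrading convergence in measure to $L^{1}$ convergence, which is what the integrals against $f_{j}$ require, and controlling $\sum_{i}\|\zeta_{i}\|^{2}$.

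For this I would first also include $\gamma=\id$, so that $\sum_{i}\|\zeta_{i,x}\|^{2}\approx\|\xi_{x}\|^{2}\le1$ off a small set $E$. Then I would replace each $\zeta_{i}$ by $1_{X\setminus E'}\zeta_{i}$, where $E'=\{x:\sum_{i}\|\zeta_{i,x}\|^{2}>2\}$. By Cauchy--Schwarz, $|\sum_{i}\omega^{\rho}_{\zeta_{i},\gamma}|\le2$ everywhere after truncation. The truncation only alters $\omega$ on $E'\cup\gamma(E')$, which has small measure. Boundedness combined with convergence in measure gives $L^{1}$ convergence, and the integrals against $f_{j}$ follow by dominated convergence. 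The total norm satisfies $\sum_{i}\|\zeta_{i}\|^{2}\le2$. Finally I would set $\zeta=(\zeta_{1},\dots,\zeta_{k})$ and pass to the direct sum $\cK^{\oplus k}$; this is fine because $\cK^{\oplus k}$ is weakly equivalent to $\cK$. Alternatively, I can write $\sum_{i}\varphi_{\zeta_{i}}$ as $\varphi_{\zeta}$ times a scalar within the convex hull of norm-bounded vector states, which is the normalized form appearing in condition (iv). Either way the hypothesis of Proposition \ref{prop:dense set reduction} holds with $C_{\xi}=\sqrt2$.

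($\Leftarrow$) Assume $\cH\preceq\cK$ as $C^{*}(\cR)$-representations, and take $\xi$ bounded as above, $Q$ of finite measure and $\varepsilon>0$. Cover $Q$ up to $\varepsilon$ by the graphs of $\gamma_{1},\dots,\gamma_{m}$. By condition (iii) of the weak-containment theorem, $\varphi_{\xi}$ is a weak$^{*}$ limit of finite sums $\sum_{i}\varphi_{\zeta_{i}}$. Testing on $f\gamma_{j}$ gives weak convergence in $L^{1}$ of $\sum_{i}\omega^{\rho}_{\zeta_{i},\gamma_{j}}$ to $\omega^{\pi}_{\xi,\gamma_{j}}$, but this is not convergence in measure. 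To fix it I would use the positive-type structure: testing on $f\ge0$ at $\gamma=\id$ forces $\sum_{i}\|\zeta_{i,x}\|^{2}\to\|\xi_{x}\|^{2}$ weakly. I would also consider the vector-valued functions $x\mapsto(\zeta_{i,x})_{i}$ paired against $\pi$. The cleanest route is likely a localization. Partition $X$ into small sets $A_{l}$ and use the states on $1_{A_{l}}\gamma1_{A_{l'}}$, together with the fact that (iv) permits convex combinations. Then show that
\[\int\Bigl|\omega^{\pi}_{\xi,\gamma}-\sum_{i}\omega^{\rho}_{\zeta_{i},\gamma}\Bigr|\,d\mu\]
can be made small by a Hilbert-space parallelogram argument: replace $\xi$ by $\pi(h)\xi$ for $h\in L^{\infty}(X)$, so the states $\varphi_{\pi(h)\xi}$ approximate pointwise data. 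I expect this direction, obtaining in-measure rather than weak approximation, to be the main difficulty. If it resists, I would follow \cite[Proposition 4.6]{floresharbourgroupoid} directly.
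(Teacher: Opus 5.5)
Your direction ($\Rightarrow$) is sound and is essentially the paper's argument for that implication. It includes $\gamma=\id$ in the covering, truncates the $\zeta_i$ so that $\sum_i\|\zeta_{i,x}\|^2$ is pointwise controlled, and feeds a convex combination of vector states of norm at most $C_\xi$ into Proposition \ref{prop:dense set reduction}. Restricting $D$ to vectors with $\|\xi_x\|\le 1$ a.e.\ is a small simplification: the paper works with general $\xi\in\cH$ and must control the exceptional set using absolute continuity of $A\mapsto\int_A\|\xi_x\|^2\,d\mu$.

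The gap is in ($\Leftarrow$). You correctly reduce to approximating $\omega^{\pi}_{\xi,\gamma_j}$ in measure by $\sum_i\omega^{\rho}_{\zeta_i,\gamma_j}$. You also correctly note that weak$^*$ approximation of $\varphi_\xi$ only gives weak convergence in $L^1(X)$. However, the localization/parallelogram scheme you then sketch is not an argument, since it is unclear what estimate it would produce, and the step is left open.

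The missing idea is convexity plus Hahn--Banach. Use condition (iv), not (iii): (iii) only gives complex linear combinations of vector states, which are not of the form $\sum_i\ip{\rho(x,y)\zeta_{i,y},\zeta_{i,x}}$ required by the definition. By (iv), $\varphi_\xi$ is a weak$^*$ limit of convex combinations of vector states $\varphi_\zeta$ with $\|\zeta\|=\|\xi\|$. Testing on $f\gamma_j$ with $f\in L^\infty(X)$ shows that $(\omega^{\pi}_{\xi,\gamma_j})_{j=1}^{m}$ lies in the weak closure, in $L^{1}(X)^{\oplus m}$, of the convex set $\co\{(\omega^{\rho}_{\zeta,\gamma_j})_{j=1}^{m}:\|\zeta\|\le\|\xi\|\}$.

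By Mazur's theorem, the weak and norm closures of a convex set coincide, so you get convex combinations within $\varepsilon^2$ in $L^1$-norm. Writing $\sum_i\lambda_i\omega^{\rho}_{\zeta_i,\gamma}=\sum_i\omega^{\rho}_{\sqrt{\lambda_i}\zeta_i,\gamma}$ puts this in the form the definition requires. Chebyshev's inequality then turns the $L^1$ estimate into the in-measure estimate on each graph $\gr(\gamma_j^{-1})$. Combined with your covering of $Q$ by finitely many graphs and your truncation of $\xi$, this finishes the direction. This is exactly the paper's route, and no localization is needed.
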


\begin{proof}
First suppose that  $\cH\preceq \cK$ as representations of $C^{*}(\cR)$. Let $\xi=(\xi_{x})_{x}\in \int_{X}^{\oplus}\cH_{x}\,d\mu(x)$, and $\gamma_{1},\cdots,\gamma_{n}\in [\cR]$. The fact that    $\cH\preceq \cK$ implies that if we view $(\omega_{\xi,\gamma_{j}}^{\pi})_{j=1}^{n}\in L^{1}(X)^{\oplus n}$ and give $L^{1}(X)^{\oplus n}$ the $\ell^{1}$-direct sum of the $\|\cdot\|_{L^{1}(X)}$ norm, then $(\omega_{\xi,\gamma_{j}}^{\pi})_{j=1}^{n}$ is in the weak closure of 
\[ \co\{(\omega_{\zeta,\gamma_{j}}^{\rho})_{j=1}^{n}:\zeta\in \cK,\|\zeta\|\leq \|\xi\|\}.\]
Thus by separating Hahn-Banach (see \cite[Theorem V.1.4]{Conway}), we have that 
\[(\omega_{\xi,\gamma_{j}}^{\pi})_{j=1}^{n}\in \overline{\co}^{\|\cdot\|_{1}}\{(\omega_{\zeta,\gamma_{j}}^{\rho})_{j=1}^{n}:\zeta\in \cK,\|\zeta\|\leq \|\xi\|\}.\]
So given $\varepsilon>0$, we may find $\zeta_{1},\cdots,\zeta_{k}\in \cK$ with $\|\zeta_{j}\|\leq \|\xi\|$, and $\lambda_{1},\cdots,\lambda_{k}\in [0,1]$ with $\sum_{i=1}^{k}\lambda_{i}=1$ and which satisfy
\[\sum_{j=1}^{n}\left\|\omega_{\xi,\gamma_{j}}^{\pi}-\sum_{i=1}^{k}\lambda_{i}\omega_{\zeta_{i},\gamma_{j}}^{\rho}\right\|_{1}<\varepsilon^{2}\]
This implies that 
\begin{align*}
&\int_{\bigcup_{j=1}^{n}\gr(\gamma_{j}^{-1})}\left|\ip{\pi(x,y)\xi_{y},\xi_{x}}-\sum_{i=1}^{k}\lambda_{i}\ip{\rho(x,y)\zeta_{i,y},\zeta_{i,x}}\right|\,d\mu(x)\\
&\leq \sum_{j=1}^{n}\int_{\gr(\gamma_{j}^{-1})}\left|\ip{\pi(x,y)\xi_{y},\xi_{x}}-\sum_{i=1}^{k}\lambda_{i}\ip{\rho(x,y)\zeta_{i,y},\zeta_{i,x}}\right|\,d\mu(x)<\varepsilon^{2}.
\end{align*}
So by Chebyshev's inequality,
\[\mu_{\cR}\left(\left\{(x,y)\in \bigcup_{j=1}^{n}\gr(\gamma_{j}^{-1}):\left|\ip{\pi(x,y)\xi_{y},\xi_{x}}-\sum_{i=1}^{k}\lambda_{i}\ip{\rho(x,y)\zeta_{i,y},\zeta_{i,x}}\right|>\varepsilon\right\}\right)<\varepsilon.\]

Now suppose that $\xi\in \Meas((\cH_{x})_{x\in X})$, a finite measure $Q\subseteq \cR$, and an $\varepsilon>0$ are given. Choose $\gamma_{1},\cdots,\gamma_{n}\in [\cR]$ so that 
\[\mu_{\cR}(Q\setminus \bigcup_{j=1}^{n}\gr(\gamma_{j})^{-1})<\varepsilon.\]
Then choose $\widetilde{\xi}=(\widetilde{\xi}_{x})_{x}\in \cH$ so that $E:=\{x:\widetilde{\xi}_{x}\ne \xi_{x}\}$ has $\mu(E)<\frac{\varepsilon}{n}$. By the above, we may find $\zeta_{1},\cdots,\zeta_{k}\in \cK$ with
\[\mu_{\cR}\left(\left\{(x,y)\in \bigcup_{j=1}^{n}\gr(\gamma_{j}^{-1}):\left|\ip{\pi(x,y)\widetilde{\xi}_{y},\widetilde{\xi}_{x}}-\sum_{i=1}^{k}\lambda_{i}\ip{\rho(x,y)\zeta_{i,y},\zeta_{i,x}}\right|>\varepsilon\right\}\right)<\varepsilon.\]
Set 
\[Q_{0}=Q\cap \bigcup_{j=1}^{n}\{(x,y)\in \gr(\gamma_{j}^{-1}(x)):y\in X\setminus E\}.\]
Then
\[\mu_{\cR}(Q\setminus Q_{0})<2\varepsilon,\]
and by the above
\[\mu_{\cR}\left(\left\{(x,y)\in Q_{0}:\left|\ip{\pi(x,y)\xi_{y},\xi_{x}}-\sum_{i=1}^{k}\lambda_{i}\ip{\rho(x,y)\zeta_{i,y},\zeta_{i,x}}\right|>\varepsilon\right\}\right)<\varepsilon.\]
Thus 
\[\mu_{\cR}\left(\left\{(x,y)\in Q:\left|\ip{\pi(x,y)\xi_{y},\xi_{x}}-\sum_{i=1}^{k}\lambda_{i}\ip{\rho(x,y)\zeta_{i,y},\zeta_{i,x}}\right|>\varepsilon\right\}\right)<3\varepsilon.\]
So $\cH\preceq \cK$ as representations of $\cR$.

Conversely, suppose that $\cH\preceq \cK$ as representations of $\cR$. Let $\xi=(\xi_{x})_{x}\in \cH$, and $f_{1},\cdots,f_{\ell}\in L^{\infty}(X),\gamma_{1},\cdots,\gamma_{n}\in [\cR]$, and $\varepsilon>0$ be given. We assume, without loss of generality that $\id=\gamma_{1}.$ Set $C=\max_{j}\|f_{j}\|_{\infty}$, and let $Q=\bigcup_{j=1}^{n}\gr(\gamma_{j}^{-1})$. Choose $\delta>0$ so that if $A\subseteq X$ is measurable with $\mu(A)<\delta$, then
\[\int_{A}\|\xi_{x}\|^{2}\,d\mu(x)<\frac{\varepsilon}{C+1}.\]

Since $\cH\preceq \cK$ as representations of $\cR$, we may find $\zeta_{1},\zeta_{2},\cdots,\zeta_{k}\in \Meas((\cK_{x})_{x})$ so that if 
\[F=\left\{(x,y)\in Q:\left|\ip{\pi(x,y)\xi_{y},\xi_{x}}-\sum_{i=1}^{k}\ip{\rho(x,y)\zeta_{i,y},\zeta_{i,x}}\right|>\frac{\varepsilon}{C+1}\right\},\]
then $\mu_{\cR}(F)<\min(\delta/n,\varepsilon)$. Let $T=\{x\in X:(x,x)\in F\}$, and  $S=X\setminus T$. For $i=1,\cdots,k$, set $\widetilde{\zeta}_{i,x}=1_{S}(x)\zeta_{i,x}$. Since $\gamma_{1}=\id$, for $x\in S$ we have
\[\left\|\|\xi_{x}\|^{2}-\sum_{i=1}^{k}\|\zeta_{i,x}\|^{2}\right|<\frac{\varepsilon}{C+1}.\]
In particular, we obtain $\sum_{i=1}^{k}\|\widetilde{\zeta}_{i}\|^{2}\leq \|\xi\|^{2}+1.$ 
Set $C_{\xi}=\sqrt{\|\xi\|^{2}+1}$.
Letting $\pi_{2}\colon \cR\to X$ be the projection onto the second coordinate, we have that $\pi_{2}(F)$ is analytic, and hence $\mu$-measurable. Set $B=\bigcup_{j=1}^{n}\gamma_{j}(\pi_{2}(F))$. Note that $\mu(\pi_{2}(F))\leq \mu_{\cR}(F)<\delta/n.$ Thus our choice of $\delta$ implies that for all $1\leq s\leq \ell,1\leq j\leq n$ we have:
\[\int_{B}|f_{s}(x)||\ip{\xi_{\gamma_{j}^{-1}(x)},\xi_{x}}|\,d\mu(x)\leq C\left(\int_{B}\|\xi_{x}\|^{2}\,d\mu(x)\right)^{1/2}\left(\int_{\gamma_{j}^{-1}(B)}\|\xi_{x}\|^{2}\,d\mu(x)\right)^{1/2}<\varepsilon.\]
Similarly,
\begin{align*}
&\int_{B}|f_{s}(x)|\left|\sum_{i=1}^{k}\ip{\widetilde{\zeta}_{\gamma_{j}^{-1}(x)},\widetilde{\zeta}_{x}}\right|\,d\mu(x)\\
&\leq C\left(\int_{B\cap S}\|\xi_{x}\|^{2}+\frac{\varepsilon}{C+1}\,d\mu(x)\right)^{1/2}\left(\int_{\gamma_{j}^{-1}(B)\cap S}\|\xi_{x}\|^{2}+\frac{\varepsilon}{C+1}\,d\mu(x)\right)^{1/2}<2\varepsilon.
\end{align*}
Thus 
\begin{align*}
&\left|\ip{\pi(f_{s}\gamma_{j})\xi,\xi}-\sum_{i=1}^{k}\ip{\rho(f_{s}\gamma_{j})\widetilde{\zeta}_{i},\widetilde{\zeta_{i}}}\right|\\
&\leq 3\varepsilon+C\int_{X\setminus B}\left|\ip{\pi(x,\gamma_{j}^{-1}(x))\xi_{\gamma_{j}^{-1}(x)},\xi_{x}}-\sum_{i=1}^{k}\ip{\rho(x,\gamma_{j}^{-1}(x))\widetilde{\zeta}_{i,\gamma_{j}^{-1}(x)},\widetilde{\zeta}_{i,x}}\right|\,d\mu(x).    
\end{align*}
For $x\in X\setminus B$, we have that $\gamma_{j}^{-1}(x)\notin \pi_{2}(F)$ and thus $\gamma_{j}^{-1}(x)\in S$ and $(x,\gamma_{j}^{-1}(x))\notin F$. Thus our choice of $F$ implies that 
\[\left|\ip{\pi(f_{s}\gamma_{j})\xi,\xi}-\sum_{i=1}^{k}\ip{\rho(f_{s}\gamma_{j})\widetilde{\zeta}_{i},\widetilde{\zeta_{i}}}\right|<4\varepsilon.\]
Since 
$\sum_{i=1}^{k}\|\widetilde{\zeta}_{i}\|^{2}\leq C_{\xi}^{2},$
we may write 
\[\sum_{i=1}^{k}\ip{\rho(\cdot)\widetilde{\zeta}_{i},\widetilde{\zeta}_{i}}\]
as a convex combination of vector states coming from vectors which have norm at most $C_{\xi}$. We are thus in a position to apply Proposition \ref{prop:dense set reduction}.

\end{proof}

\subsection{Coamenability and weak containment}
For a discrete, probability measure-preserving, relation $\cR$ on a standard probability space $(X,\mu)$, we regard $L^{2}(X)$ as a representation $\alpha$ of $\cR$ by regarding $L^{2}(X)=\int_{X}^{\oplus}\C\,d\mu(x)$ and setting $\alpha(x,y)=\id$. Then $\alpha(\gamma)=\alpha_{\gamma}$ for $\gamma\in [\cR]$ and $\alpha(f)\xi=f\xi$ for $f\in L^{\infty}(X),\xi\in L^{2}(X)$.

For $\cS\leq \cR$, recall the representation $\lambda_{\cR/\cS}$ discussed in Example \ref{ex: quasi-regular rep}. As discussed there, this is the analogue of the quasi-regular representation of a group on the coset space of a subgroup. Additionally, we recall that $\alpha$ is supposed to be the analogue of the trivial representation of a group. We thus should expect that, analogous to the group case, that coamenability should be the same as weak containment of $\alpha$ inside $\lambda_{\cR/\cS}$. We proceed to prove that this is the case. 

\begin{prop}\label{prop: coamenable weak containment characterize}
Let $\cS\leq \cR$ be discrete, probability measure-preserving, relations on a standard probability space $(X,\mu)$. Then $\cS\leq \cR$ is a coamenable inclusion if and only if $L^{2}(X)\preceq L^{2}(\cR/\cS)$ as representations of $\cR$.
\end{prop}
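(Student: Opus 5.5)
Throughout, I would use the Reiter-type characterization of coamenability from \cite{HayesCoAmen}: $\cS\leq \cR$ is coamenable if and only if there is a sequence $\xi_{n}=(\xi_{n,x})_{x}\in L^{2}(\cR/\cS)$ with $\|\xi_{n,x}\|_{\ell^{2}([x]_{\cR}/\cS)}=1$ for almost every $x$, and
\[\|\lambda_{\cR/\cS}(\gamma)\xi_{n}-\xi_{n}\|\to_{n\to\infty}0 \quad \text{for all } \gamma\in[\cR].\]
If the definition in \cite{HayesCoAmen} is the invariant-mean one, I would first quote the equivalence with this condition proved there. The proof then has two directions, both run at the level of states on $C^{*}(\cR)$. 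By the lemma identifying the two notions of weak containment, it does not matter which version of $\preceq$ we use. The basic computation is that, for the vector $1\in L^{2}(X)$, $f\in L^{\infty}(X)$ and $\gamma\in[\cR]$,
\[\varphi_{1}(f\gamma)=\int f\,d\mu,\]
since $\alpha(x,y)=\id$ and $\mu$ is a probability measure. For $\zeta\in L^{2}(\cR/\cS)$,
\[\varphi_{\zeta}(f\gamma)=\int f(x)\ip{\zeta_{\gamma^{-1}(x)},\zeta_{x}}\,d\mu(x),\]
where the inner product is taken in $\ell^{2}([x]_{\cR}/\cS)$. This is legitimate because $\lambda_{\cR/\cS}(x,y)=\id$ and $[x]_{\cR}=[\gamma^{-1}(x)]_{\cR}$.

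\emph{Coamenable $\Rightarrow$ weak containment.} Take $\xi_{n}$ as above. Almost invariance together with $\|\xi_{n,x}\|=1$ gives
\[\int \bigl|1-\ip{\xi_{n,\gamma^{-1}(x)},\xi_{n,x}}\bigr|\,d\mu(x)\leq \|\lambda(\gamma)\xi_{n}-\xi_{n}\|\,\|\xi_{n}\|\to 0,\]
using Cauchy--Schwarz fiberwise. Hence $\varphi_{\xi_{n}}(f\gamma)\to\varphi_{1}(f\gamma)$ for all $f\in L^{\infty}(X)$ and $\gamma\in[\cR]$. I would then apply Proposition \ref{prop:dense set reduction} with $B=\{f\gamma: f\in L^{\infty}(X),\ \gamma\in[\cR]\}$, whose span is $\C(\cR)$ by \cite[Lemma 3.3]{SauerBetti} and hence dense in $C^{*}(\cR)$. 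I would take $D=\{1\}$, which is cyclic since $\alpha(L^{\infty}(X))1$ is dense in $L^{2}(X)$, and $C_{1}=1$. The convex hull condition holds trivially because single vector states $\varphi_{\xi_{n}}$ converge on finitely many elements of $B$.

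\emph{Weak containment $\Rightarrow$ coamenable.} By condition (iv) of the weak containment theorem, $\varphi_{1}$ is a weak$^{*}$-limit of convex combinations $\sum_{i}\lambda_{i}\varphi_{\zeta_{i}}$ with $\|\zeta_{i}\|\leq 1$. Such a combination is a single vector state for the vector $\zeta=(\sqrt{\lambda_{i}}\zeta_{i})_{i}$ in $L^{2}(\cR/\cS)\otimes\ell^{2}$, whose fibers are $\ell^{2}([x]_{\cR}/\cS)\otimes\ell^{2}$ with trivial cocycle. I would remove the amplification by setting
\[\eta_{x}(c)=\|\zeta_{x}(c,\cdot)\|_{\ell^{2}} \quad \text{for } c\in[x]_{\cR}/\cS.\]
Then $\|\eta_{x}\|=\|\zeta_{x}\|$, and since $\lambda(x,y)=\id$ the reverse triangle inequality gives $\|\eta_{\gamma^{-1}(x)}-\eta_{x}\|\leq\|\zeta_{\gamma^{-1}(x)}-\zeta_{x}\|$. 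So it suffices to produce almost invariant vectors in the amplification.

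Evaluating the approximating states on $f\cdot\id$ for all $f$ shows that $\|\zeta_{x}\|^{2}\to 1$ weakly in $L^{1}$. To upgrade this to norm convergence I would repeat the Hahn--Banach step from the proof of the lemma: $\varphi_{1}$ lies in the weak closure of a convex set of tuples of $L^{1}$-functions, hence in its norm closure. This gives $\|\zeta_{x}\|^{2}\to 1$ in $L^{1}$ and $\ip{\zeta_{\gamma^{-1}(x)},\zeta_{x}}\to 1$ in $L^{1}$, simultaneously for any finitely many $\gamma$. Expanding
\[\|\lambda(\gamma)\zeta-\zeta\|^{2}=2\|\zeta\|^{2}-2\,\mathrm{Re}\,\varphi_{\zeta}(\gamma)\to 0\]
gives almost invariance. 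Finally I would normalize fiberwise, replacing $\eta_{x}$ by $\eta_{x}/\|\eta_{x}\|$ where $\|\eta_{x}\|$ is close to $1$ and by a fixed unit vector elsewhere (say $\delta_{[x]_{\cS}}$, which is $\lambda$-invariant up to the exceptional set). The $L^{1}$ convergence makes the exceptional set small and the perturbation small in $L^{2}$. A diagonal argument over a countable dense subset of $[\cR]$, which is separable in the uniform metric, together with the continuity in Lemma \ref{lem: implied continuity}, produces the required sequence.

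The main obstacle I expect is this last direction's bookkeeping. One issue is turning weak$^{*}$ convergence of states into the fiberwise $L^{1}$ control needed for the normalization. The other is making sure the normalization to fiberwise unit vectors does not destroy almost invariance. The rest is a fairly direct translation of the group-theoretic argument.
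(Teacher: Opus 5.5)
Your proof is correct. The forward direction is essentially the paper's: unit-fiber almost invariant vectors from \cite[Theorem 3.5]{HayesCoAmen}, convergence of the vector states on elements $f\gamma$, then Proposition \ref{prop:dense set reduction} with $D=\{1\}$. The converse, however, takes a genuinely different route.

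The paper uses the norm form of weak containment. Take a countably supported probability measure $\nu$ on $[\cR]$ whose support generates $\cR$, and any $\cR$-invariant $E$ with $\mu(E)>0$. Then $\|\lambda_{\cR/\cS}(1_{E}\nu 1_{E})\|\geq \|\alpha(1_{E}\nu 1_{E})\|\geq 1$, since $\alpha(1_{E}\nu 1_{E})1_{E}=1_{E}$. The Kesten-type spectral criterion in \cite[Theorem 3.5]{HayesCoAmen} then gives coamenability.

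You instead use the state form of weak containment, in four steps:
\begin{enumerate}
\item You realize convex combinations of vector states as single vector states in the amplification.
\item You upgrade weak $L^{1}$ convergence to norm convergence by Hahn--Banach. This is the same device the paper uses to show that $L^{2}(X)\preceq \cH$ produces unit-fiber almost invariant vectors in $\cH^{\oplus\N}$.
\item You de-amplify via $\eta_{x}(c)=\|\zeta_{x}(c,\cdot)\|$, which is legitimate exactly because $\lambda_{\cR/\cS}$ has trivial cocycle.
\item You renormalize fiberwise.
\end{enumerate}
The paper's argument is a few lines, but it leans on the deeper spectral characterization and has to restrict to invariant sets to handle non-ergodic $\cR$. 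Yours needs only the Reiter-type characterization, so a single characterization of coamenability serves both directions. The $L^{1}$ control of $x\mapsto\|\zeta_{x}\|^{2}$ also takes care of non-ergodicity automatically.

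Two minor corrections.
\begin{enumerate}
\item Extending almost invariance from a countable dense subset of $[\cR]$ to all of $[\cR]$ requires equicontinuity uniform in $n$. This comes from the unit fibers: $\|\lambda_{\cR/\cS}(\gamma)\xi_{n}-\lambda_{\cR/\cS}(\sigma)\xi_{n}\|^{2}\leq 4\mu(\{x:\gamma(x)\neq \sigma(x)\})$. It does not come from Lemma \ref{lem: implied continuity}, which concerns a fixed vector. Alternatively, use a countable generating subgroup and the almost-everywhere form of the Reiter condition, as the paper does.
\item $\delta_{[x]_{\cS}}$ is not $\lambda_{\cR/\cS}$-invariant, but it need not be. Any measurable unit field on the exceptional set works, since that set has small measure.
\end{enumerate}
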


\begin{proof}
First, suppose that $\cS\leq \cR$ is a coamenable inclusion. Then, by \cite[Theorem 3.5]{HayesCoAmen} we may find $\xi^{(n)}\in L^{2}(\cR/\cS)$ with $\|\xi^{(n)}_{x}\|_{2}=1$ for almost every $x\in X$ and with 
\[\lim_{n\to\infty}\|\xi^{(n)}_{x}-\xi^{(n)}_{y}\|_{2}=0 \text{ for almost every $(x,y)\in\cR$.}\]
Then for every $k\in \C(\cR)$ we have 
\[\ip{\lambda_{\cR/\cS}(k)\xi^{(n)},\xi^{(n)}}=\int\sum_{y\in [x]_{\cR}}k(x,y)\ip{\xi^{(n)}_{y},\xi^{(n)}_{x}}\,d\mu(x).\]
Since $\|\xi^{(n)}_{x}\|_{2}=1$ almost everywhere, the dominated convergence theorems allows us to conclude that 
\[\lim_{n\to\infty}\ip{\lambda_{\cR/\cS}(k)\xi^{(n)},\xi^{(n)}}=\int_{X}\sum_{y}k(x,y)\,d\mu(x)=\ip{\alpha(k)1,1}.\]
Since $\overline{\{\alpha(k)1:k\in \C(\cR)\}}=L^{2}(X)$, and $\|\xi^{(n)}\|_{2}=1$, we apply Proposition \ref{prop:dense set reduction}  to conclude that $L^{2}(X)\preceq L^{2}(\cR/\cS)$.

For the reverse implication, let $\nu\in \Prob([\cR])$ be countably supported so that if $G$ is the subgroup of $[\cR]$ generated by $\supp(\nu)$, then $Gx=[x]_{\cR}$ for almost every $x\in X$. Let $E\subseteq X$ be an $[\cR]$-invariant measurable set with $\mu(E)>0$. Since $L^{2}(X)\preceq L^{2}(\cR/\cS)$, we have that 
\[\|\lambda_{\cR|_{E}/\cS|_{E}}(\nu)\|=\|\lambda_{\cR/\cS}(1_{E})\lambda_{\cR/\cS}(\nu)\lambda_{\cR/\cS}(1_{E})\|\geq \frac{1}{\mu(E)^{1/2}}\|\alpha_{\cR/\cS}(1_{E})\alpha_{\cR/\cS}(\nu)\alpha_{\cR/\cS}(1_{E})1_{E}\|=1,\]
the last line following from $\cR$-invariance of $E$. Hence, by \cite[Theorem 3.5]{HayesCoAmen} it follows that $\cS\leq \cR$ is coamenable.

\end{proof}

\subsection{Weak containment and almost invariant vectors}

As discussed in Example \ref{ex: trival rep}, for a discrete, probability measure-preserving equivalence relation $\cR$ on a standard probability space $(X,\mu)$, the representation $L^{2}(X)$ is the analogue of the trivial representation of a group. For unitary representations of groups, weakly containing the trivial representation is the same as having almost invariant vectors. In this subsection, we prove the appropriate analogue for representations of equivalence relations.

For this, it will be helpful to prove the following lemma, which is similar to what we did in \cite[Theorem 2.8]{HayesCoAmen}.

\begin{lem}\label{lem: inv set reduction}
Let $\cR$ be a discrete, probability measure-preserving, equivalence relation on a standard probability space $(X,\mu)$. Suppose that $\varphi\in C^{*}(\cR)^{*}$ is a state, and that $\varphi(1_{E})=\mu(1_{E})$ for every $\cR$-invariant measurable set $E$, and that $\varphi(\gamma)=1$ for every $\gamma\in [\cR]$. Then $\varphi(a)=\ip{\alpha(a)1,1}_{L^{2}(X)}$ for all $a\in C^{*}(\cR)$. 
    
\end{lem}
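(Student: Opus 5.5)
The plan is to pass to the GNS representation of $\varphi$. This reduces the statement to identifying the restriction of $\varphi$ to $L^{\infty}(X)$, which is an $[\cR]$-invariant state. We then pin it down using invariant sets and the comparison of sets inside the full group.

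\emph{Step 1: reduction to $L^{\infty}(X)$.} Let $(\pi,\cH_{\varphi},\xi_{\varphi})$ be the GNS triple of $\varphi$. For $\gamma\in[\cR]$, $\pi(\gamma)$ is unitary and $\ip{\pi(\gamma)\xi_{\varphi},\xi_{\varphi}}=1=\|\xi_{\varphi}\|^{2}$. Equality in Cauchy--Schwarz then forces $\pi(\gamma)\xi_{\varphi}=\xi_{\varphi}$. Consequently, for $f\in L^{\infty}(X)$ and $\gamma\in[\cR]$:
\begin{itemize}
\item $\varphi(f\gamma)=\ip{\pi(f)\pi(\gamma)\xi_{\varphi},\xi_{\varphi}}=\varphi(f)$;
\item $\varphi(\alpha_{\gamma}(f))=\varphi(\gamma f\gamma^{-1})=\varphi(f)$.
\end{itemize}
On the other side, $\ip{\alpha(f\gamma)1,1}=\int f\,d\mu$. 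By \cite[Lemma 3.3]{SauerBetti}, every element of $\C(\cR)$ is a finite sum $\sum f_{\gamma}\gamma$, and $\C(\cR)$ is norm dense in $C^{*}(\cR)$. It therefore suffices to show that $\varphi(f)=\int f\,d\mu$ for all $f\in L^{\infty}(X)$.

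Now $\psi:=\varphi|_{L^{\infty}(X)}$ is a positive unital functional. Hence $|\psi(f)|\leq\|f\|_{\infty}$, so it is enough to verify $\psi(1_{A})=\mu(A)$ for all measurable $A$. Linearity and sup-norm density of simple functions then finish the argument. Note that we cannot assume $\psi$ is normal, since the GNS representation need not be normal on $L^{\infty}(X)$. So everything must be done with finitely many sets and sup-norm estimates.

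\emph{Step 2: comparison of sets.} Let $\mathcal{I}$ be the $\sigma$-algebra of $\cR$-invariant sets and $E_{\mathcal{I}}$ the conditional expectation onto it. I would use the standard fact (as in \cite[Theorem 2.8]{HayesCoAmen}) that if $A,B$ are measurable with $E_{\mathcal{I}}(1_{A})=E_{\mathcal{I}}(1_{B})$ a.e., then there is $\gamma\in[\cR]$ with $\gamma(A)=B$ modulo null sets. This follows from a measure-exhaustion argument with elements of $[[\cR]]$, extended to all of $X$ because the complements also have equal conditional measures. Invariance of $\psi$ then gives $\psi(1_{A})=\psi(1_{B})$ whenever $E_{\mathcal{I}}(1_{A})=E_{\mathcal{I}}(1_{B})$.

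\emph{Step 3: computing $\psi(1_{A})$.} Fix $A$ and $n\in\N$. I would partition $X$ into finitely many invariant sets $E_{1},\dots,E_{m}$ on which $E_{\mathcal{I}}(1_{A})$ lies in an interval of length $1/n$. Inside each $E_{i}$, I would split $A\cap E_{i}$ and $E_{i}$ into pieces of equal conditional measure. This uses a relative non-atomicity (Maharam-type) lemma for the conditional measures; on atomic ergodic components with finite classes, the interval-length bound should instead be handled by the finite structure of those components. The goal is measurable sets $A_{1},\dots,A_{n}$ with $E_{\mathcal{I}}(1_{A_{j}})=\tfrac1n E_{\mathcal{I}}(1_{A})$, together with a partition $X=\bigsqcup_{k=1}^{n}P_{k}$ with $E_{\mathcal{I}}(1_{P_{k}})=\tfrac1n$.

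By Step 2, all $\psi(1_{P_{k}})$ are equal, so each equals $1/n$. Comparing $A\cap E_{i}$ with unions of the pieces $P_{k}\cap E_{i}$ then sandwiches $\psi(1_{A\cap E_{i}})$ between $\psi(1_{E_{i}})(c_{i}\mp 1/n)$, up to an error of order $1/n$, where $c_{i}$ is the approximate value of $E_{\mathcal{I}}(1_{A})$ on $E_{i}$. Here we use positivity of $\psi$ and the hypothesis $\psi(1_{E_{i}})=\mu(E_{i})$. Summing over $i$ and letting $n\to\infty$ gives $\psi(1_{A})=\int E_{\mathcal{I}}(1_{A})\,d\mu=\mu(A)$.

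\emph{Main obstacle.} The main obstacle is Step 2/3: the relative (non-ergodic) version of the fact that sets with equal conditional measures are $[\cR]$-conjugate, and the relative equipartition. I would prove both by measure exhaustion using partial isomorphisms in $[[\cR]]$ applied on ergodic components, and treat the finite-class (atomic) part separately.
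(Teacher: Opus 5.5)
Your proposal is correct and follows the paper's proof. The paper does exactly your Step 1: the GNS vector is fixed by $[\cR]$, so $\varphi(f\gamma)=\varphi(f)$ and $\varphi\circ\alpha_{\gamma}=\varphi$ on $L^{\infty}(X)$, and density of $\C(\cR)$ finishes. For the remaining fact, that an $[\cR]$-invariant, possibly non-normal state on $L^{\infty}(X)$ agreeing with $\mu$ on $\cR$-invariant sets equals $\mu$, the paper simply cites the proof of \cite[Lemma 4.2]{HVTypeIIICartan} (see also \cite[Theorem 2.8]{HayesCoAmen}); your Steps 2--3 reprove this by hand via full-group comparison and relative equipartition.

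One small point to add: $\psi$ is only finitely additive, so when you treat the finite-class pieces $X_{N}=\{x:|[x]_{\cR}|=N\}$ separately, control the tail through the invariant set $Y=\bigsqcup_{N>M}X_{N}$ using $0\leq\psi(1_{A\cap Y})\leq\psi(1_{Y})=\mu(Y)$.
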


\begin{proof}
By \cite[Theorem VIII.5.14]{Conway}, we may find a Hilbert space $\cH_{\varphi}$, a unit vector $\xi_{\varphi}\in \cH_{\varphi}$ and a $*$-homomorphism $\pi_{\varphi}\colon C^{*}(\cR)\to B(H_{\varphi})$ with $\varphi(x)=\ip{\pi_{\varphi}(x)\xi_{\varphi},\xi_{\varphi}}$. Expanding $\|\pi_{\varphi}(\gamma)\xi_{\varphi}-\xi_{\varphi}\|^{2}$ and using that $\varphi(\gamma)=1$, we see that $\pi_{\varphi}(\gamma)\xi_{\varphi}=\xi_{\varphi}$. In particular, $\varphi(\gamma x)=\varphi(x)=\varphi(x\gamma)$ for every $x\in C^{*}(\cR)$ and $\gamma\in [\cR]$. 
 
Thus for all $f\in L^{\infty}(X)$ we have 
\[\varphi(\alpha_{\gamma}(f))=\varphi(\gamma f\gamma^{-1})=\varphi(f).\]
Hence, by the proof of \cite[Lemma 4.2]{HVTypeIIICartan} (see also \cite[Theorem 2.8]{HayesCoAmen}) our hypothesis implies that $\varphi(f)=\int_{X}f\,d\mu$ for all $f\in L^{\infty}(X)$. Thus for all $f\in L^{\infty}(X),\gamma\in [\cR]$ we have that 
\[\varphi(f\gamma)=\varphi(f)=\int f\,d\mu=\ip{\alpha(f\gamma)1,1}_{L^{2}(X)}.\]
Since $\overline{\Span\{f\gamma:f\in L^{\infty}(X),\gamma\in [\cR]\}}=C^{*}(\cR)$, the result follows by continuity of $\varphi$ and $\ip{\alpha(\cdot) 1,1}_{L^{2}(X)}$. 
\end{proof}

If $((\cH_{x})_{x\in X},\pi)$ is a unitary representation of a discrete, probability measures-preserving relation $\cR$ on $(X,\mu)$, then we can construct a new representation  $((\cH_{x}^{\oplus \N})_{x\in X},\pi^{\oplus \N})$. Here we equip $((\cH_{x}^{\oplus \N})_{x\in X}$ with the measurable structure so that $(\xi_{x})_{x}\in \prod_{x\in X}\cH_{x}^{\oplus \N}$ is measurable if and only if for every $j\in \N$ we have $(\xi_{x}(j))_{x\in X}\in \Meas((\cH_{x})_{x\in X})$. Moreover we set
$\pi^{\oplus \N}(x,y)=(\pi(x,y))^{\oplus \N}$. Note that we have a natural identification 
\[\left(\int_{X}^{\oplus} \cH_{x}\,d\mu(x)\right)^{\oplus \N}\cong \int_{X}^{\oplus}\cH_{x}^{\oplus \N}\,d\mu(x)\]
via
\[(\xi_{x})_{x\in X}\mapsto (j\mapsto (\xi_{x}(j)_{x\in X}).\]

\begin{prop}
Let $\cR$ be a discrete, probability measure-preserving, equivalence relation on a standard probability space $(X,\mu)$. Let $((\cH_{x})_{x\in X},\pi)$ be a unitary representation of $\cR$, and set $\cH=\int_{X}^{\oplus}\cH_{x}\,d\mu(x)$. The following are equivalent.
\begin{enumerate}[(i)]
\item \label{item: weak containment} $L^{2}(X)$ is weakly contained in $\cH$,
\item \label{item: a.i. measurable sets} for every $\cR$-invariant measurable set $E$, we have that $L^{2}(E)$ has $[\cR]$-almost invariant vectors. 
\item \label{item: a.i. in inf direct sum} there is a sequence $\xi^{(n)}\in \cH^{\oplus \N}$ with $\|\xi^{(n)}_{x}\|_{2}=1$ for almost every $x\in X$ and so that 
\[\|\pi^{\oplus \N}(x,y)\xi^{(n)}_{y}-\xi^{(n)}_{x}\|_{2}\to_{n\to\infty}0 \text{ for almost every $(x,y)\in\cR$.}\]
\end{enumerate}

\end{prop}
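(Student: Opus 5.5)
We prove the cycle (\ref{item: a.i. in inf direct sum})$\Rightarrow$(\ref{item: weak containment})$\Rightarrow$(\ref{item: a.i. measurable sets})$\Rightarrow$(\ref{item: a.i. in inf direct sum}). In (\ref{item: a.i. measurable sets}) we read ``$L^{2}(E)$'' as the subrepresentation $\pi(1_{E})\cH=\int_{E}^{\oplus}\cH_{x}\,d\mu(x)$. We read ``$[\cR]$-almost invariant vectors'' as: for every finite $F\subseteq[\cR]$ and $\varepsilon>0$ there is a unit vector $\xi\in\pi(1_{E})\cH$ with $\|\pi(\gamma)\xi-\xi\|<\varepsilon$ for all $\gamma\in F$.

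\emph{(\ref{item: a.i. in inf direct sum})$\Rightarrow$(\ref{item: weak containment}).} This copies the first half of Proposition \ref{prop: coamenable weak containment characterize}. For $k\in\C(\cR)$ we have
\[\ip{\pi^{\oplus\N}(k)\xi^{(n)},\xi^{(n)}}=\int_{X}\sum_{y}k(x,y)\ip{\pi^{\oplus\N}(x,y)\xi^{(n)}_{y},\xi^{(n)}_{x}}\,d\mu(x).\]
The integrand is dominated, since the pointwise norms are $1$ and $k$ has uniformly bounded row sums. By dominated convergence, this tends to $\int_{X}\sum_{y}k(x,y)\,d\mu(x)=\ip{\alpha(k)1,1}$. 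Since $\alpha(\C(\cR))1$ is dense in $L^{2}(X)$, Proposition \ref{prop:dense set reduction} gives $L^{2}(X)\preceq\cH^{\oplus\N}$. Finally, $\cH^{\oplus\N}$ and $\cH$ have the same kernel as representations of $C^{*}(\cR)$, so they are weakly equivalent and $L^{2}(X)\preceq\cH$.

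\emph{(\ref{item: weak containment})$\Rightarrow$(\ref{item: a.i. measurable sets}).} We use the norm characterization of weak containment. Fix an $\cR$-invariant $E$ with $\mu(E)>0$ and a finite $F\subseteq[\cR]$. Set
\[b=\tfrac{1}{2|F|}\sum_{\gamma\in F}(\gamma+\gamma^{-1})\in\C(\cR),\]
which is self-adjoint of norm $\le1$. Since $E$ is invariant, $1_{E}$ commutes with $b$, and $\alpha(1_{E}b)1_{E}=1_{E}$. Hence $1\le\|\alpha(1_{E}b)\|\le\|\pi(1_{E}b)\|$. More precisely, $1$ lies in the spectrum of $\alpha(1_{E}b)$, and spectrum shrinks under weak containment because of the kernel inclusion. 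So $1\in\sigma(\pi(1_{E}b))$ with $\|\pi(1_{E}b)\|\le1$, which yields unit vectors $\xi\in\pi(1_{E})\cH$ with $\mathrm{Re}\ip{\pi(b)\xi,\xi}\to1$. Since each $\mathrm{Re}\ip{\pi(\gamma)\xi,\xi}\le1$, every term must tend to $1$, and therefore $\|\pi(\gamma)\xi-\xi\|\to0$ for $\gamma\in F$.

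\emph{(\ref{item: a.i. measurable sets})$\Rightarrow$(\ref{item: a.i. in inf direct sum}).} This is the main obstacle: we must pass from globally almost invariant unit vectors to vectors with fiberwise norm exactly $1$ and pointwise almost-everywhere convergence. The plan is as follows.
\begin{enumerate}[(a)]
\item Fix a countable group $G=\{g_{1},g_{2},\dots\}\subseteq[\cR]$ generating $\cR$, as in Proposition \ref{prop: rep correspond}.
\item Fix $n$, and take a finite partition $E_{1},\dots,E_{m}$ of $X$ into $\cR$-invariant sets. Using (\ref{item: a.i. measurable sets}), choose unit vectors $\eta^{(i)}\in\pi(1_{E_{i}})\cH$ that are $1/n$-invariant under $g_{1},\dots,g_{n}$. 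Put $\zeta=\sum_{i}\mu(E_{i})^{1/2}\eta^{(i)}$.
\item The function $h(x)=\|\zeta_{x}\|^{2}$ satisfies $\|\alpha_{g}h-h\|_{1}\le 2\|\pi(g)\zeta-\zeta\|$ by Cauchy--Schwarz. Also $\int_{E_{i}}h=\mu(E_{i})$.
\item Choose the partition finely enough (approximating the conditional expectation onto the invariant $\sigma$-algebra) that $\|h-1\|_{1}$ is small. For this step I expect to need the argument of \cite[Lemma 4.2]{HVTypeIIICartan}, as used in Lemma \ref{lem: inv set reduction}, so that almost invariance of $h$ forces it to be close to its expectation onto invariant sets.
\item Normalize fiberwise: set $\xi_{x}=\zeta_{x}/\|\zeta_{x}\|$ where $h(x)\ge1/2$. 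On the small remaining set, use a fixed measurable unit field placed in a different coordinate of $\cH_{x}^{\oplus\N}$. This is where the infinite direct sum is used; it needs $\cH_{x}\neq0$ almost everywhere, which follows from (\ref{item: a.i. measurable sets}) applied to the saturation of $\{x:\cH_{x}=0\}$.
\item The estimates give $\int\|\pi^{\oplus\N}(x,g_{j}^{-1}x)\xi^{(n)}_{g_{j}^{-1}x}-\xi^{(n)}_{x}\|\,d\mu\to0$ for each $j$. Passing to a diagonal subsequence gives almost-everywhere convergence on each graph $\gr(g_{j})$, hence on almost all of $\cR$.
\end{enumerate}
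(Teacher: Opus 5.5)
Your implications (iii)$\Rightarrow$(i) and (i)$\Rightarrow$(ii) are correct, and the spectral argument for (i)$\Rightarrow$(ii) is a clean direct alternative to the paper's route through (iii). The gap is step (d) of (ii)$\Rightarrow$(iii), which is exactly where the difficulty lies. Almost invariance of $h(x)=\|\zeta_{x}\|^{2}$ under finitely many elements, together with $\int_{E_{i}}h\,d\mu=\mu(E_{i})$ on an invariant partition, cannot force $\|h-1\|_{1}$ to be small; that would be a form of strong ergodicity. For example, let $\cR$ be ergodic but not strongly ergodic, and let $\pi=\alpha$. The only invariant partition is the trivial one, and there are sets $A_{k}$ with $\mu(\gamma A_{k}\Delta A_{k})\to 0$ for every $\gamma\in[\cR]$ and $\mu(A_{k})\to c\in(0,1)$. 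For large $k$, $\eta=\mu(A_{k})^{-1/2}1_{A_{k}}$ is a legitimate choice in (b). Yet $h=\mu(A_{k})^{-1}1_{A_{k}}$ has $\|h-1\|_{1}=2(1-\mu(A_{k}))\to 2(1-c)$, and the set you must patch in (e) has measure about $1-c$, so it is not small. Hypothesis (ii) gives you no mechanism for selecting better vectors.

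The argument of \cite[Lemma 4.2]{HVTypeIIICartan} does not rescue this, for two reasons. First, it concerns states that are \emph{exactly} invariant under the whole full group $[\cR]$. Invariance under a generating countable group $G$ is not enough, since weak$^{*}$ limits of such $h$'s can be singular $G$-invariant means. Second, it only identifies weak$^{*}$ limits, so it controls $h$ against finitely many test functions and never in $L^{1}$-norm.

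The missing idea is an averaging step that upgrades weak closeness to norm closeness. This, and not the patching of a bad set, is the real role of $\cH^{\oplus\N}$. The paper first proves (ii)$\Rightarrow$(i). It takes a weak$^{*}$ cluster point of vector states of unit vectors that are almost invariant under ever larger finite subsets of $[\cR]$ and have the correct mass on finitely many invariant sets. Such a cluster point is exactly $[\cR]$-invariant and agrees with $\mu$ on invariant sets, so Lemma \ref{lem: inv set reduction} identifies it with $\ip{\alpha(\cdot)1,1}$. Then, for (i)$\Rightarrow$(iii), with $\omega_{\xi}(x)=\|\xi_{x}\|^{2}$, it observes that
\[K=\{((\varphi_{\xi}(\gamma_{j})-1)_{j=1}^{n},\omega_{\xi}-1):\xi\in\cH^{\oplus\N},\ \|\xi\|\leq 1\}\subseteq\C^{n}\oplus L^{1}(X)\]
is convex, because $\sqrt{\lambda}\xi_{1}\oplus\sqrt{1-\lambda}\xi_{2}$ realizes convex combinations in both coordinates. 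Weak containment puts $0$ in the weak closure of $K$, hence, by Hahn--Banach, in its norm closure. This produces almost invariant vectors with $\|\omega_{\xi}-1\|_{1}\to 0$, after which your normalization (e) and diagonal/subsequence step (f) go through.

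Replacing (d) by this two-step argument closes the gap. Alternatively, choose your $\zeta$'s almost invariant under finite subsets of $[\cR]$ (not just of $G$), so that the functionals $f\mapsto\int fh\,d\mu$ converge weakly to $\mu$. Then apply Mazur's lemma to direct sums $\bigoplus_{k}\sqrt{\lambda_{k}}\zeta_{k}\in\cH^{\oplus\N}$.
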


\begin{proof}
Throughout the proof, we let $\psi\in C^{*}(\cR)^{*}$ be the state associated with the vector $1\in L^{2}(X)$, i.e.
\[\psi(a)=\ip{\alpha(a)1,1}_{L^{2}(X)}.\]

(\ref{item: weak containment}) implies (\ref{item: a.i. in inf direct sum}): 
For $\xi\in \cH^{\oplus \N}$, define $\omega_{\xi}\in L^{1}(X)$ by $\omega_{\xi}(x)=\|\xi_{x}\|^{2}$. Given $\gamma_{1},\cdots,\gamma_{n}\in [\cR]$, set 
\[K=\{((\varphi_{\xi}(\gamma_{j})-1)_{j=1}^{n},\omega_{\xi}-1):\xi\in \cH^{\oplus N},\|\xi\|\leq 1\}\subseteq \C^{n}\oplus L^{1}(X).\]

We start by proving the following

\emph{Claim 1: $K$ is convex.}
To see that this is true, let $\xi_{1},\xi_{2}\in\cH^{\oplus \N}$ with $\|\xi_{j}\|\leq 1$. Then for $\lambda\in [0,1]$ we have $\sqrt{\lambda}\xi_{1}\oplus \sqrt{1-\lambda}\xi_{2}\in \cH^{\oplus (\N\sqcup \N)}$ has norm $1$, and 
\[\varphi_{\sqrt{\lambda}\xi_{1}\oplus \sqrt{1-\lambda}\xi_{2}}=\lambda\varphi_{\xi_{1}}+(1-\lambda)\varphi_{\xi_{2}},\]
\[\omega_{\sqrt{\lambda}\xi_{1}\oplus \sqrt{1-\lambda}\xi_{2}}=\lambda\omega_{\xi_{1}}+(1-\lambda)\omega_{\xi_{2}},\]
the claim then follows by bijecting $\N\sqcup\N$ with $\N$.

We now prove  
\emph{Claim 2: $0\in \overline{K}^{\|\cdot\|}$.}
To see this, let $\varepsilon>0$ and $f_{1},\cdots,f_{k}\in L^{\infty}(X)$ be given.
Then for $\xi\in \cH^{\oplus \N}$ we have 
\[\int f\omega_{\xi}\,d\mu-\int f\,d\mu=\ip{\pi^{\oplus \N}(f)\xi,\xi}-\ip{\alpha(f)\xi,\xi},\]
and 
\[\varphi_{\xi}(\gamma_{j})-1=\ip{\pi^{\oplus \N}(\gamma_{j})\xi,\xi}-\ip{\alpha(\gamma_{j})\xi,\xi}.\]
Hence by Claim 1 and the fact that $L^{2}(X)$ is weakly contained in $\cH$, we see that 
\[0\in \overline{K}^{wk}=\overline{K}^{\|\cdot\|},\]
the last equality following by convexity of $K$ and separating Hahn-Banach (see \cite[Theorem IV.3.13]{Conway}).

Having shown Claims 1 and Claim 2, we now prove (\ref{item: a.i. in inf direct sum}). Let $\Gamma\leq [\cR]$ be countable and with $\cR=\cR_{\Gamma,x}$. By Claim 2 and a diagonal argument, we may find a sequence $\xi^{(n)}_{x}$ so that $\ip{\pi^{\oplus \N}(\gamma)\xi^{(n)},\xi^{(n)}}\to_{n\to\infty}1$ for all $\gamma\in \Gamma$, and $\|\omega_{\xi^{(n)}}-1\|_{1}\to_{n\to\infty}0$. Since  $\|\omega_{\xi^{(n)}}-1\|_{1}\to_{n\to\infty}0$, we may perturb $\xi^{(n)}$ and assume that $\|\xi^{(n)}_{x}\|_{2}=1$ for almost every $x\in X$. The fact that $\ip{\pi^{\oplus \N}(\gamma)\xi^{(n)},\xi^{(n)}}\to_{n\to\infty}1$ implies that $\|\pi^{\oplus \N}(\gamma)\xi^{(n)}-\xi^{(n)}\|_{2}\to_{n\to\infty}0$. Applying countability of $\Gamma$ and passing to a subsequence we may assume that for almost every $x\in X$ we have
\[\|\pi^{\oplus \N}(x,\gamma^{-1}(x))\xi^{(n)}_{\gamma^{-1}(x)}-\xi^{(n)}_{x}\|_{2}\to_{n\to\infty}0, \textnormal{ for all $\gamma\in \Gamma$.}\]
Since $\Gamma$ is a group and $\Gamma x=[x]_{\cR}$ for almost every $x\in X$, we conclude (\ref{item: a.i. in inf direct sum}).

(\ref{item: a.i. in inf direct sum}) implies (\ref{item: a.i. measurable sets}):  Fix $F\subseteq [\cR]$, and $\varepsilon>0$ be given. Let $\xi^{(n)}$ be as (\ref{item: a.i. in inf direct sum}). Write $\xi^{(n)}=(\xi^{(n)}_{j})_{j=1}^{\infty}$ with $\xi^{(n)}_{j}=(\xi^{(n)}_{j,x})_{x\in X}$. Note that
\[\lim_{n\to\infty}\sum_{j=1}^{\infty}\sum_{\gamma\in F}\|\pi(\gamma)\pi(1_{E})\xi^{(n)}_{j}-\pi(1_{E})\xi^{(n)}_{j}\|^{2}=\lim_{n\to\infty}\int_{X}\sum_{\gamma\in F}\|\pi(x,\gamma^{-1}(x))\xi_{\gamma^{-1}(x)}^{(n)}-\xi^{(n)}_{x}\|^{2}\,d\mu(x).\]
Since $\|\xi^{(n)}_{x}\|=1$ for almost every $x\in X$, it follows that $\sum_{\gamma\in F}\|\pi(x,\gamma^{-1}(x))\xi_{\gamma^{-1}(x)}^{(n)}-\xi^{(n)}_{x}\|^{2}\leq 4|F|$. Hence the dominated convergence theorem implies that 
\[\lim_{n\to\infty}\sum_{j=1}^{\infty}\sum_{\gamma\in F}\|\pi(\gamma)(\pi(1_{E})\xi^{(n)}_{j}-\pi(1_{E})\xi^{(n)}_{j}\|^{2}=0.\]
However, 
\[\sum_{j=1}^{\infty}\|(\pi(1_{E})\xi^{(n)}_{j}\|^{2}=\int_{E}\|\xi^{(n)}_{x}\|^{2}\,d\mu(x)=\mu(E).\]
Since $\mu(E)>0$, for all sufficiently large $n$ there is some $j\in \N$ with
\[\sum_{\gamma\in F}\|\pi(\gamma)(\pi(1_{E})\xi^{(n)}_{j}-\pi(1_{E})\xi^{(n)}_{j}\|^{2}<\varepsilon\|\pi(1_{E})\xi^{(n)}_{j}\|^{2}.\]
Setting $\zeta=\frac{\pi(1_{E})\xi^{(n)}_{j}}{\|\pi(1_{E})\xi^{(n)}_{j}\|}$, we have that 
\[\|\pi(\gamma)\zeta-\zeta\|<\varepsilon, \text{ for all $\gamma\in F$}.\]
Since $F,\varepsilon$ were arbitrary, this completes the proof.

(\ref{item: a.i. measurable sets}) implies (\ref{item: weak containment}):
Let $C=\overline{\{\varphi_{\xi}:\xi\in \cH,\|\xi\|=1\}}^{wk^{*}}\subseteq C^{*}(\cR)^{*}$. Since $\|\varphi_{\xi}\|\leq 1$ if $\|\xi\|\leq 1,$ the Banach-Alaoglu theorem implies that $C$ is weak$^{*}$-compact. It suffices to show that $\psi\in C$. 
We start with the following claim:

\emph{Claim: given $\cR$-invariant measurable sets $E_{1},\cdots,E_{n}$, and $\gamma_{1},\cdots,\gamma_{n}\in [\cR]$, there is a $\varphi\in C$ with $\varphi(\gamma_{j})=1$, $\varphi(1_{E_{i}})=\mu(E_{i})$ for all $1\leq i,j\leq n$. }
If the claim is true, then by weak$^{*}$-compactness of $C$, there is a 
\[\varphi_{0}\in \bigcap_{\gamma\in [\cR], E\subseteq X \text{$\cR$-invariant and measurable}}\{\varphi\in C:\varphi(\gamma)=1,\varphi(1_{E})=\mu(E)\}.\]
By Lemma \ref{lem: inv set reduction}, it follows that $\varphi_{0}=\psi$. Hence it suffices to prove the claim.

To prove the claim, fix $\cR$-invariant measurable sets $E_{1},\cdots,E_{n}$, and $\gamma_{1},\cdots,\gamma_{n}\in [\cR]$. Let $\cF$ be the $\sigma$-algebra generated by $(E_{j})_{j=1}^{n}$ and find pairwise disjoint, $\cR$-invariant measurable sets $A_{1},\cdots,A_{k}$ so that $\mu(A_{i})>0$ and so that we have that $\cF$ agrees modulo null sets with $\left\{\bigsqcup_{j\in D}A_{j}:D\subseteq \{1,\cdots,k\}\right\}.$ By linearity, to prove the claim for $E_{1},\cdots,E_{n}$, and $\gamma_{1},\cdots,\gamma_{n}$, it suffices to find $\varphi\in C$ with $\varphi(1_{A_{i}})=\mu(A_{i})$, $\varphi(\gamma_{j})=1$ for $1\leq i\leq k,1\leq j\leq n$. By  (\ref{item: a.i. measurable sets}) applied to each $A_{i}$, for $i=1,\cdots,k$ and $l\in \N$, we may find $\xi^{(i)}_{l}\in \pi(1_{A_{i}})\cH$ with $\|\xi^{(i)}_{l}\|_{2}=1$ and $\|\pi(\gamma_{j})\xi^{(i)}_{l}-\xi^{(i)}_{l}\|\to_{l\to\infty}0$ for $j=1,\cdots,n$. Since each $\pi(1_{A_{i}})\cH$ is orthogonal, if we set
\[\xi_{l}=\sum_{i=1}^{k}\sqrt{\mu(A_{i})}\|\xi^{(i)}_{l}\|,\]
then $\|\xi_{l}\|=1$ and $\|\pi(\gamma_{j})\xi_{l}-\xi_{l}\|\to_{l\to\infty}0$ for $j=1,\cdots,n$. Note that $\varphi_{\xi_{l}^{(j)}}(1_{A_{i}})=\delta_{j=i}$ for all $i,j=1,\cdots,k$, so $\varphi_{\xi_{l}}(1_{A_{i}})=\mu(A_{i})$ for all $i=1,\cdots,k$. By Cauchy-Schwartz we have that $\varphi_{\xi_{l}}(\gamma_{j})\to_{l\to\infty}1$  for all $j=1,\cdots,n$. Thus, any 
\[\varphi\in \bigcap_{l=1}^{\infty}\overline{\{\varphi_{\xi_{m}}:m\geq l\}}^{wk^{*}}\]
verifies the claim. 

\end{proof}

\subsection{Strong ergodicity and weak containment}

In this section, we show that we may characterize strong ergodicity in terms of weak containment of representations. In particular, we show that an ergodic probability measure-preserving relation $\cR$ is strongly ergodic if and only every unitary representation which is weakly equivalent to $\alpha$ in fact contains $\alpha$. This should be regarded as analogous to \cite[Proposition 3.2]{BMOFull}. In light of the preceding section and the fact that $\alpha$ is the analogue of the trivial representation, this allows one to view strong ergodicity as a weak version of Property (T).

To show that (when $\cR$ is strongly ergodic) any representation weakly equivalent to $\alpha$ must contain it, it is helpful to use the following well known fact. 

\begin{lem}\label{lem: its all spectral gap}
 Let $\cR$ be a strongly ergodic, probability measure-preserving, discrete equivalence relation. Then $P_{\C1}\in \alpha(C^{*}(\cR))$. 
\end{lem}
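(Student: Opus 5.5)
The plan is to produce a self-adjoint element of $\alpha(C^{*}(\cR))$ with a spectral gap at $1$ whose eigenspace for $1$ is exactly $\C1$. The projection $P_{\C1}$ is then a continuous function of that element. For this I will use Schmidt's theorem \cite[Theorem 2.4]{SchmidtCohom}, quoted in the introduction: strong ergodicity of $\cR$ is equivalent to spectral gap of the Koopman representation of $[\cR]$ on $L^{2}_{0}(X)=L^{2}(X)\ominus\C1$. Here strong ergodicity includes ergodicity, so the constants are the only $[\cR]$-invariant vectors in $L^{2}(X)$.

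First I extract a finite witness of spectral gap. Spectral gap means that the representation $\gamma\mapsto\alpha_{\gamma}|_{L^{2}_{0}(X)}$ has no almost invariant unit vectors. So there exist a finite set $F=\{\gamma_{1},\dots,\gamma_{n}\}\subseteq[\cR]$ and a $\kappa>0$ such that
\[\max_{j}\|\alpha_{\gamma_{j}}\xi-\xi\|\geq\kappa\|\xi\|\quad\text{for all }\xi\in L^{2}_{0}(X).\]
Next I set
\[T=\frac{1}{2n}\sum_{j=1}^{n}(\gamma_{j}+\gamma_{j}^{*})\in\C(\cR)\subseteq C^{*}(\cR).\]
Then $\alpha(T)$ is self-adjoint, satisfies $\|\alpha(T)\|\leq 1$, and fixes $1$. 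Since each $\alpha_{\gamma}$ is measure preserving, $\alpha(T)$ also leaves $L^{2}_{0}(X)$ invariant.

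The key computation is, for $\xi\in L^{2}_{0}(X)$,
\[\ip{(1-\alpha(T))\xi,\xi}=\frac{1}{2n}\sum_{j}\|\alpha_{\gamma_{j}}\xi-\xi\|^{2}\geq\frac{\kappa^{2}}{2n}\|\xi\|^{2}.\]
It follows that the spectrum of $\alpha(T)|_{L^{2}_{0}(X)}$ lies in $[-1,1-\kappa^{2}/2n]$. Hence $\sigma(\alpha(T))\subseteq[-1,1-\delta]\cup\{1\}$ with $\delta=\kappa^{2}/2n$, and the spectral projection of $\alpha(T)$ at $\{1\}$ is exactly $P_{\C1}$.

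Now I choose a continuous function $g$ on $[-1,1]$ with $g=0$ on $[-1,1-\delta]$ and $g(1)=1$. Continuous functional calculus in the $C^{*}$-algebra $\alpha(C^{*}(\cR))$ gives
\[g(\alpha(T))=\alpha(g(T))=P_{\C1}\in\alpha(C^{*}(\cR)).\]
I use here that the image of a $C^{*}$-algebra under a $*$-homomorphism is closed, so $\alpha(C^{*}(\cR))$ is a $C^{*}$-algebra. Alternatively, $P_{\C1}=\lim_{k}\alpha(T)^{2k}$ in norm, with $\alpha(T)$ replaced by $(1+\alpha(T))/2$ to avoid the eigenvalue $-1$.

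The main obstacle is the first step, namely the passage from strong ergodicity (asymptotically invariant sets) to spectral gap of the full group on $L^{2}_{0}$ with a single finite set $F$. I would simply cite Schmidt's theorem here. The reduction to a finite $F$ then follows from the standard fact that if no finite $F$ works, a diagonal argument over finite subsets of $[\cR]$ produces almost invariant unit vectors. Everything after that is routine functional calculus.
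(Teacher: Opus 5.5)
Your proposal is correct and follows the paper's proof. Both use Schmidt's spectral gap theorem to build the averaging element $T=\frac{1}{2n}\sum_j(\gamma_j+\gamma_j^{-1})\in\C(\cR)$ and then recover $P_{\C1}$ from $\alpha(T)$ by norm limits. The paper keeps the spectrum away from $-1$ by taking $\gamma_1=\id$ and using $\alpha(T)^k\to P_{\C1}$, while you use a continuous function vanishing on $[-1,1-\delta]$ (or pass to $(1+\alpha(T))/2$), and you also write out the bound $\langle(1-\alpha(T))\xi,\xi\rangle\geq\delta\|\xi\|^2$ that the paper leaves implicit.
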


\begin{proof}
Since $\cR$ is strongly ergodic, by \cite[Theorem 2.4]{SchmidtCohom} we may find $\gamma_{1},\cdots,\gamma_{n}\in [\cR]$ with $\gamma_{1}=\id$, and
\[\frac{1}{2n}\left\|\left(\sum_{j=1}^{n}\alpha_{\gamma_{j}}+\alpha_{\gamma_{j}^{-1}}\right)\big|_{L^{2}_{0}(X)}\right\|<1.\]
Set $T=\frac{1}{2n}\sum_{j=1}^{n}\alpha_{\gamma_{j}}+\alpha_{\gamma_{j}^{-1}}$. The above then shows that $\|P_{\C 1}-T^{n}\|\to_{n\to\infty}0$. 

\end{proof}

A special case of interest for unitary representations of equivalence relations is the case where the field of Hilbert spaces are fiberwise one-dimensional. In this case, a unitary representation corresponds to a map $c\colon \cR\to S^{1}$ which satisfies $c(x,y)c(y,z)=c(x,z)$ for almost every $x\in X$ and all $y,z\in [x]_{\cR}$. These maps are typically called \emph{cocycles} in the literature. A cocycle is a \emph{coboundary} if there is a measurable $u\colon X\to S^{1}$ with $c(x,y)=u(x)\overline{u(y)}$. We identify two cocycles if they agree modulo nulls sets. We equip the space of (equivalence classes of) cocycles with the topology of local convergence in measure. So $c_{n}\to c$ if for every finite measure $F\subseteq \cR$ and every $\varepsilon>0$, we have that 
\[\mu_{\cR}(\{(x,y)\in F:|c_{n}(x,y)-c(x,y)|>\varepsilon\})\to_{n\to\infty}0.\]
It is direct to check that this corresponds to convergence with respect to the metric 
\[d(c_{1},c_{2})=\sum_{n=1}^{\infty}\frac{2^{-n}}{\mu_{\cR}(F_{n})+1}\int_{F_{n}}|c_{1}(x,y)-c_{2}(x,y)|\,d\mu_{\cR}(x,y)\]
where $(F_{n})_{n=1}^{\infty}$ is any sequence of finite measure sets with $\cR=\bigcup_{n}F_{n}$ modulo null sets. A cocycle is said to be an \emph{approximate coboundary} if it is a limit, in the topology of local convergence in measure, of coboundaries. For ergodic relations, \cite[Proposition 2.3]{SchmidtCohom} shows that strong ergodicity is equivalent to every approximate coboundary being a coboundary. Using this allows us to characterize strong ergodicity in terms of weak containment of representations, following the argument in \cite[Proposition 3.2]{BMOFull}.

\begin{cor}\label{cor: weak containment strong ergodic}
Suppose that $\cR$ is an ergodic,probability measure-preserving, discrete equivalence relation.  The following are equivalent:
\begin{enumerate}[(i)]
\item $\cR$ is strongly ergodic, \label{item: R is SE}
\item \label{item: R is SE in terms of weak equivalence} if $((\cH_{x})_{x\in X})$ is a unitary representation of $\cR$ and $L^{2}(X)$ is weakly equivalent to $\cH:=\int_{X}^{\oplus}\cH_{x}\,d\mu(x)$ as representations of $\cR$, then $L^{2}(X)$ embeds into $\cH$ as a representation of $\cR$.
\end{enumerate}
\end{cor}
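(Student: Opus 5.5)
We prove the two implications separately, following the strategy of \cite[Proposition 3.2]{BMOFull}.

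For (\ref{item: R is SE}) implies (\ref{item: R is SE in terms of weak equivalence}), suppose $\cR$ is strongly ergodic and $L^{2}(X)$ is weakly equivalent to $\cH$.
\begin{enumerate}[(1)]
\item By Lemma \ref{lem: its all spectral gap}, the projection $P_{\C1}$ equals $\alpha(a)$ for some $a\in C^{*}(\cR)$; replacing $a$ by $a^{*}a$ we may take $a\geq 0$.
\item Since $\cH\preceq L^{2}(X)$, we have $\ker\alpha\subseteq \ker\pi$, so $\pi$ factors through $\alpha(C^{*}(\cR))$ via a $*$-homomorphism $\Psi$. Hence $p:=\pi(a)=\Psi(P_{\C1})$ is a projection.
\item Since $L^{2}(X)\preceq\cH$, we get $\|\pi(a)\|\geq \|\alpha(a)\|=1$, so $p\neq 0$.
\item The range of $p$ is the candidate copy of $L^{2}(X)$. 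In $\alpha(C^{*}(\cR))$ we have $P_{\C1}\alpha(\gamma)=P_{\C1}$ and $P_{\C1}\alpha(f)P_{\C1}=(\int f\,d\mu)P_{\C1}$. Applying $\Psi$ gives $p\pi(\gamma)=p$, hence $\pi(\gamma)p=p$, and $p\pi(f)p=(\int f\,d\mu)\,p$.
\item Pick a unit vector $\eta\in p\cH$. Then $\eta$ is $[\cR]$-invariant and $\ip{\pi(f)\eta,\eta}=\int f\,d\mu$.
\item The map $f\mapsto \pi(f)\eta$ for $f\in L^{\infty}(X)$ is therefore isometric for the $L^{2}$ norm. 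It extends to an isometry $L^{2}(X)\to\cH$, and it intertwines $\alpha$ and $\pi$ on $L^{\infty}(X)$ and on $[\cR]$, since $\pi(\gamma)\pi(f)\eta=\pi(\alpha_{\gamma}(f))\pi(\gamma)\eta=\pi(\alpha_{\gamma}(f))\eta$.
\item By \cite[Lemma 3.3]{SauerBetti}, $L^{\infty}(X)$ and $[\cR]$ generate $\C(\cR)$, so this isometry is an embedding of representations. Ergodicity is not even needed in this direction.
\end{enumerate}

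For (\ref{item: R is SE in terms of weak equivalence}) implies (\ref{item: R is SE}), we argue by contrapositive using \cite[Proposition 2.3]{SchmidtCohom}. If $\cR$ is ergodic but not strongly ergodic, there is an approximate coboundary $c$ which is not a coboundary. Write $c=\lim_{n} c_{n}$ with $c_{n}(x,y)=u_{n}(x)\overline{u_{n}(y)}$.
\begin{enumerate}[(1)]
\item Consider the one-dimensional representation $\cH_{c}=\int_{X}^{\oplus}\C\,d\mu(x)$ with $\pi(x,y)=c(x,y)$.
\item Set $\cH=L^{2}(X)\oplus\cH_{c}$. Then $L^{2}(X)\preceq\cH$ trivially.
\item To show $\cH_{c}\preceq L^{2}(X)$, use the vectors $u_{n}\in L^{2}(X)$. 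For finite measure $Q\subseteq\cR$, the relevant coefficient is
\[\ip{\alpha(x,y)u_{n}(y),u_{n}(x)}=\overline{u_{n}(x)}u_{n}(y)=\overline{c_{n}(x,y)},\]
while the coefficient of the constant section $1$ in $\cH_{c}$ is $\overline{c(x,y)}$. Since $c_{n}\to c$ locally in measure, this shows $\cH_{c}\preceq L^{2}(X)$ in the relation sense. Because $\cH_{c}$ is cyclic for the section $1$, this suffices via the preceding Lemma equating the two notions of weak containment, together with Proposition \ref{prop:dense set reduction}. So $\cH$ is weakly equivalent to $L^{2}(X)$.
\item Now run the contrapositive on a copy of $L^{2}(X)$ inside $\cH_{c}$. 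Note that this argument must be applied to $\cH_{c}$ itself, not to $\cH$: the whole of $\cH$ trivially contains $L^{2}(X)$. So we test hypothesis (\ref{item: R is SE in terms of weak equivalence}) on $\cH_{c}$ alone, which we have just shown is weakly equivalent to $L^{2}(X)$.
\item An embedding $L^{2}(X)\to\cH_{c}$ sends $1$ to a nonzero $[\cR]$-invariant vector $v$ in $\cH_{c}$, that is, a measurable function $v$ with $c(x,y)v(y)=v(x)$.
\item Then $|v|$ is $\cR$-invariant, hence constant by ergodicity, and nonzero. Normalizing gives $u=v/|v|$ with $c(x,y)=u(x)\overline{u(y)}$. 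So $c$ is a coboundary, a contradiction.
\end{enumerate}

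The main obstacle is step (3) of the second implication: verifying the weak containment $\cH_{c}\preceq L^{2}(X)$ carefully, including the measure-theoretic approximation on finite-measure sets. The projection argument in the first implication is essentially formal once Lemma \ref{lem: its all spectral gap} is available.
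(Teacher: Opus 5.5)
Your proof of (i)$\Rightarrow$(ii) is essentially the paper's: you factor $\pi$ through $\alpha(C^{*}(\cR))$, push $P_{\C1}$ forward to a nonzero projection, and build the isometry from a unit vector in its range. For (ii)$\Rightarrow$(i) you use the same Schmidt approximate-coboundary representation $\cH_{c}$ as the paper.

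\textbf{The gap.} The problem is in checking the hypothesis of (ii) for $\cH_{c}$. Applying (ii) requires $\cH_{c}$ to be weakly \emph{equivalent} to $L^{2}(X)$, but you only prove $\cH_{c}\preceq L^{2}(X)$. The direct sum in step (2) gives $L^{2}(X)\preceq L^{2}(X)\oplus\cH_{c}$, and this says nothing about $L^{2}(X)\preceq \cH_{c}$. So once you drop the direct sum in step (4), your claim that $\cH_{c}$ was ``just shown'' to be weakly equivalent to $L^{2}(X)$ is unsupported.

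The missing containment is true, and it follows from the symmetric computation, which the paper records as ``the same argument works''. Write $\pi_{c}$ for the representation on $\cH_{c}$ and regard the $u_{n}$ as unit vectors of $\cH_{c}$. Then
\[\ip{\pi_{c}(f\gamma)u_{n},u_{n}}=\int_{X}f(x)\,c(x,\gamma^{-1}(x))\overline{c_{n}(x,\gamma^{-1}(x))}\,d\mu(x)\to\int_{X}f\,d\mu=\ip{\alpha(f\gamma)1,1},\]
because $|c\overline{c_{n}}-1|=|c-c_{n}|\to 0$ in measure on $\gr(\gamma^{-1})$. Since $1$ is cyclic for $L^{2}(X)$, Proposition \ref{prop:dense set reduction} then gives $L^{2}(X)\preceq\cH_{c}$. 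With this added, the direct sum is unnecessary.

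\textbf{A conjugation slip in step (3).} The coefficient of the section $1$ in $\cH_{c}$ is $\ip{c(x,y)1,1}=c(x,y)$, not $\overline{c(x,y)}$. Meanwhile $u_{n}\in L^{2}(X)$ has coefficient $\ip{u_{n}(y),u_{n}(x)}=\overline{c_{n}(x,y)}$, so the two do not match. Use $\overline{u_{n}}$ instead, whose coefficient is $u_{n}(x)\overline{u_{n}(y)}=c_{n}(x,y)$; this is the paper's choice of $\overline{v_{n}}\xi$. Apart from this, your reduction to the single cyclic section $1$ via Proposition \ref{prop:dense set reduction} is fine. It can be done directly on elements $f\gamma$ exactly as in the display above.
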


\begin{proof}
(\ref{item: R is SE}) implies (\ref{item: R is SE in terms of weak equivalence}):
Since $\cH$ is weakly equivalent to $L^{2}(X)$, there is unique isometric $*$-homomorphism $\Phi\colon \alpha(C^{*}(\cR))\to \pi(C^{*}(\cR))$ so that $\pi=\Phi\circ \alpha$. By Lemma \ref{lem: its all spectral gap}, we have that $P_{\C 1}\in \alpha(C^{*}(\cR))$. Since $\Phi$ is an isometric $*$-homomorphism, $q=\Phi(P_{\C 1})$ is a nonzero projection which satisfies that $q\pi(f)\pi(\gamma)q=\int f\,d\mu$ for all $f\in L^{\infty}(X),\gamma\in [\cR]$. Since $q\ne 0$, we can find a unit vector $\zeta\in \cH$ with $q\zeta=\zeta$. Then $\ip{\pi(f)\pi(\gamma)\zeta,\zeta}=\ip{q\pi(f)\pi(\gamma)q\zeta,\zeta}=\int f\,d\mu$. Since $\ip{f\alpha(\gamma)1,1}=\int f\,d\mu$ for all $f\in L^{\infty}(X),\gamma\in [\cR]$, it follows that there is a unique  isometry $V\in B(L^{2}(X),\cH)$ satisfying that $V1=\zeta$ and $V(\alpha(f)\alpha(\gamma))=\pi(f)\pi(\gamma)V$ for all $f\in L^{\infty}(X),\gamma\in [\cR]$.

(\ref{item: R is SE in terms of weak equivalence}) implies (\ref{item: R is SE}): Suppose $\cR$ is not strongly ergodic. Then, by \cite[Proposition 2.3]{SchmidtCohom} we may find a unitary representation $c\colon \cR\to S^{1}$  which is not a coboundary, but is a limit of coboundaries. Set $\cH_{x}=\C 1$ and for $(x,y)\in \cR$ define $\pi_{c}(x,y)\in \cU(\C)=c(x,y)$. Set $\cH=\int_{X}^{\oplus}\cH_{x}\,d\mu(x)=L^{2}(X)$. 

We claim that $\pi_{c}$ is weakly equivalent to the unitary representation $\alpha$ on $L^{2}(X)$, but does not contain $\alpha$.
 To see that $\pi_{c}$ is weakly equivalent to $\alpha$, let $v_{n}\colon X\to S^{1}$ be a sequence of measurable functions so that $(x,y)\mapsto v_{n}(x)\overline{v_{n}(y)}$ converges to $c$ locally in measure. Note that a $\xi\in \Meas((\cH_{x})_{x\in X})$ may be thought of as a measurable function $\xi\colon X\to\C$. Then, for any finite measures $Q\subseteq \cR$ we have for all large $n$ that 
 \[\mu_{\cR}(\{(x,y)\in Q:|c(x,y)\xi(y)\overline{\xi(x)}-\overline{v_{n}(y)}\xi(y)\overline{\xi(x)}v_{n}(x)|>\varepsilon\})<\varepsilon,\]
 since $(x,y)\mapsto v_{n}(x)\overline{v_{n}(y)}$ converges to $c$. Since $\overline{v_{n}(y)}\xi(y)\overline{\xi(x)}v_{n}(x)=\ip{(\alpha(x,y)\overline{v_{n}}(y)\xi(y),\overline{v_{n}(x)}\xi(x)}_{\C}$, and $c(x,y)\xi(y)\overline{\xi(x)}=\ip{c(x,y)\xi(y),\xi(x)}_{\C}$, this proves that $\pi_{c}$ is weakly contained in $\alpha$. The same argument works to show that $\pi_{c}$ weakly contains $\alpha$. Hence the two unitary representations are weakly equivalent.

Suppose $\alpha$ embeds into $\pi_{c}$. Then we may find an isometry $V\colon L^{2}(X)\to L^{2}(X)$ so that $V(\alpha(f\gamma))=\pi_{c}(f\gamma)V$ for all $f\in L^{\infty}(X)$ and $\gamma\in [\cR]$. Set $\xi=V(1)$, then for all $\gamma\in [\cR]$ we have $c(x,\gamma(x))\xi(\gamma(x))=\xi(x)$ for almost every $x\in X$. Applying this to $\gamma$ in a countable subgroup which generates $\cR$, we see that $c(x,y)\xi(y)=\xi(x)$ for almost every $(x,y)\in \cR$. In particular, $|\xi|$ is $\cR$-invariant modulo null sets. By ergodicity of $\cR$ and the fact that $\|\xi\|_{2}=1$, we see that $\xi$ is valued in $S^{1}$ modulo null sets. Thus $c(x,y)=\xi(x)\overline{\xi(y)}$ for almost every $(x,y)\in \cR$ and this forces $c$ to be a coboundary, a contradiction. 
\end{proof}

\section{Proof of the Main Theorem}

\subsection{Background on tracial von Neumann algebras and ultrapowers} \label{sec: ultrapower}

We need the general notion of a tracial von Neumann algebra.

\begin{defn}
Let $\cH$ be a Hilbert space. We say that $M\subseteq B(\cH)$ is a \emph{von Neumann algebra} if $M$ is a $*$-subalgebra which contains the identity and is closed in the strong operator topology.
\end{defn}

Note that by \cite[Corollary IX.5.2]{Conway}, it is equivalent to require that $M$ is closed in the weak operator topology.
\begin{defn}
A \emph{tracial von Neumann algebra} is a pair $(M,\tau)$ where $\tau\colon M\to \C$ is a faithful state which is in addition:
\begin{enumerate}[(i)]
    \item \emph{tracial}, i.e. $\tau(xy)=\tau(yx)$ for all $x,y\in M$,
    \item \emph{normal}, i.e. $\tau|_{\{x\in M:\|x\|\leq 1\}}$ is WOT-continuous.
\end{enumerate}
\end{defn}

Here are some examples.

\begin{example}
For $n\in \N$, $(\M_{n}(\C),\tr)$ is a tracial von Neumann algebra, where $\tr$ is the normalized trace
\[\tr(A)=\frac{1}{n}\sum_{j=1}^{n}A_{jj}.\]   
\end{example}

\begin{example}\label{ex: abelian example}
Let $(X,\mu)$ be a probability space. View $L^{\infty}(X,\mu)\subseteq B(L^{2}(X,\mu))$ via identifying $f$ with the operator $(\xi\mapsto f\xi)$. Then the weak-operator topology on $L^{\infty}(X,\mu)$ coincides with the weak$^{*}$-topology on $L^{\infty}(X,\mu)$ as the dual of $L^{1}(X,\mu)$. Thus $(L^{\infty}(X,\mu),\int \cdot\,d\mu)$ is a tracial von Neumann algebra.
    
\end{example}

\begin{example}
Let $\cR$ be a discrete, probability measure-preserving, equivalence relation on a standard probability space $(X,\mu)$. We set $L(\cR)=\overline{\lambda(C^{*}(\cR))}^{SOT}$. And $\tau(x)=\ip{x1_{\Delta},1_{\Delta}}$ where $\Delta=\{(x,x):x\in X\}$. By \cite[Proposition 2.8]{FelMoore}, this is a tracial von Neumann algebra.
\end{example}

Motivated by Example \ref{ex: abelian example} for a tracial von Neumann algebra $(M,\tau)$ we define $\|\cdot\|_{2}$ on $M$ by $\|x\|_{2}=\tau(x^{*}x)^{1/2}$. 

For a von Neumann algebra $M$, we let $M_{s.a.}=\{x\in M:x=x^{*}\}, \cU(M)=\{u\in M:u^{*}u=uu^{*}=1\}$. Note that for the case $M=L^{\infty}(X,\mu)$ these coincide with real-valued and $S^{1}$-valued functions, respectively. If $M\subseteq B(\cH)$, we set $M'=\{T\in B(\cH):Ta=aT \textnormal{ for all $a\in M$}\}$. 

A particular construction that will be relevant for us is the ultraproduct of a sequence of von Neumann algebras. A \emph{free ultrafilter} on $\N$ is, by definition, a nonzero homomorphism $\omega\colon \ell^{\infty}(\N)/c_{0}(\N)\to \C$. Note that such homomorphisms exist in abundance by \cite[Theorems VIII.2.1 and VIII.4.6]{Conway}.  We typically write $\lim_{n\to\omega}a_{n}$ instead of $\omega((a_{n})_{n=1}^{\infty}+c_{0}(\N))$ when $(a_{n})_{n=1}^{\infty}\in \ell^{\infty}(\N)$. Since $\omega$ is a nonzero homomorphism, we have that $\lim_{n\to\omega}1=1$ and since $\lim_{n\to\omega}a_{n}=0$ if $(a_{n})_{n=1}^{\infty}\in c_{0}(\N)$, we see that $\lim_{n\to\omega}a_{n}=\lim_{n\to\infty}a_{n}$ if $a_{n}$ converges. Moreover, we have
\begin{itemize}
    \item $\lim_{n\to\omega}\overline{a_{n}}=\overline{\lim_{n\to\omega}a_{n}}$, for all $(a_{n})_{n=1}^{\infty}\in \ell^{\infty}(\N)$ (by \cite[Proposition VIII.1.12(a)]{Conway}),
    \item $\lim_{n\to\omega}a_{n}b_{n}=(\lim_{n\to\omega}a_{n})(\lim_{n\to\omega}b_{n})$, $\lim_{n\to\omega}a_{n}+b_{n}=\lim_{n\to\omega}a_{n}+\lim_{n\to\omega}b_{n}$  for all $(a_{n})_{n=1}^{\infty},(b_{n})_{n=1}^{\infty}\in \ell^{\infty}(\N)$ (since $\omega$ is a homomorphism),
    \item $\lim_{n\to\omega}a_{n}\leq \lim_{n\to\omega}b_{n}$ if $(a_{n})_{n=1}^{\infty},(b_{n})_{n=1}^{\infty}\in \ell^{\infty}(\N,\R)$ and $a_{n}\leq b_{n}$ for all $n$ (by \cite[Proposition VIII.1.12(c)]{Conway}). 
\end{itemize}

Let $(M_{n},\tau_{n})$ be a sequence of tracial von Neumann algebras and $\omega$ a free ultrafilter on $\N$. We define their \emph{ultraproduct} $(\prod_{n\to\omega}M_{n},\tau_{\omega})$ by 
\[\prod_{n\to\omega}M_{n}=\frac{\{(a_{n})_{n=1}^{\infty}\in \prod_{n}M_{n}:\sup_{n}\|a_{n}\|<+\infty\}}{\{(a_{n})_{n=1}^{\infty}\in \prod_{n}M_{n}:\sup_{n}\|a_{n}\|<+\infty,\lim_{n\to\omega}\|a_{n}\|_{2}=0\}}\]
and $\tau_{\omega}((a_{n})_{n\to\omega})=\lim_{n\to\omega}\tau_{n}(a_{n})$, where $(a_{n})_{n\to\omega}$ is the image in $\prod_{n\to\omega}M_{n}$ of a sequence $(a_{n})_{n=1}^{\infty}\in \prod_{n}M_{n}$ with $\sup_{n}\|a_{n}\|<+\infty$. By the proof of \cite[Lemma A.9]{BO}, we have that $(\prod_{n\to\omega}M_{n},\tau_{\omega})$ is a tracial von Neumann algebra.

If each $(M_{n},\tau_{n})$ is a fixed algebra $(M,\tau)$, we will use $M^{\omega}$ instead of $\prod_{n\to\omega}M$. In particular, for a probability space $(X,\mu)$, we use $L^{\infty}(X,\mu)^{\omega}$ for the ultrapower with respect to the trace $\int_{X}\cdot \,d\mu$. If $\mu$ is understood from context, we often denote this by $L^{\infty}(X)^{\omega}$. 

If $\phi\in \Aut(X,\mu)$ then we have an induced action $\alpha_{\phi}^{\omega}$ on $L^{\infty}(X,\mu)^{\omega}$ by 
\[\alpha_{\phi}^{\omega}((f_{n})_{n\to\omega})=(\alpha_{\phi}(f_{n}))_{n\to\omega}.\]
We often abuse notation and use $\alpha_{\phi}$ instead of $\alpha_{\phi}^{\omega}$ if it is clear from context. If $(X,\mu)$ is now standard and $\cR$ is a discrete, probability measure-preserving relation on $(X,\mu)$, we set 
\[\Fix_{\cR}(L^{\infty}(X)^{\omega})=\{f\in L^{\infty}(X)^{\omega}:\alpha_{\gamma}(f)=f \textnormal{ for all $\gamma\in [\cR]$}\}.\]
This is relevant for strong ergodicity, since by \cite[Proposition 1.1.3]{ConnesSE} $\cR$ is strongly ergodic if and only if $\Fix_{\cR}(L^{\infty}(X)^{\omega})=\C1$.

\subsection{Proof of the main theorem}
Let $\cR$ be discrete, probability measure-preserving equivalence relation on a standard probability space $(X,\mu)$. 
We define the equivalence relation von Neumann algebra of $\cR$ to be $L(\cR)=\overline{\lambda(C^{*}(\cR))}^{WOT}.$ 
Note that if $\cS\leq \cR$ then we have a natural injective $*$-homomorphism $\iota \colon\C(\cS)\to \C(\cR)$ given by extension by zero on $\cR\setminus \cS$. 
Thus we may view $\C(\cS)\subseteq \C(\cR)$.  
It is direct to check that \cite[Proposition 7.1.15]{KadisonRingroseII},
\[\ip{\lambda_{\cR}(\iota(k))1_{\Delta},1_{\Delta}}_{L^{2}(\cR)}=\ip{\lambda_{\cS}(k)1_{\Delta},1_{\Delta}}_{L^{2}(\cS)},\]
i.e. $\iota$ is a trace-preserving. It is a folklore fact (e.g. this is the same argument as in the discussion of Section 2 of \cite{BekkaOAsuperrigid}, see also \cite[Proposition 2.10]{AW25}) that this implies that $\iota$ extends uniquely to trace-preserving $*$-homomorphism $L(\cS)\hookrightarrow L(\cR)$, which we still denote by $\iota$. By \cite[Proposition III.5.3]{Taka} the fact that $\iota$ is trace-preserving implies that it is SOT-SOT continuous when restricted to the unit ball, by \cite[Proposition 7.1.15]{KadisonRingroseII} this also implies that $\iota$ is WOT-WOT when restricted to the unit ball. By the preceding discussion, we may identify $L(\cS)$ with a von Neumann subalgebra of $L(\cR)$. Let $V\colon L^{2}(\cS)\to L^{2}(\cR)$ be the isometry $(V\xi)(x,y)=1_{\cS}(x,y)\xi(x,y)$. And define $\Phi\colon B(L^{2}(\cS))\to B(L^{2}(\cR))$ by $\Phi(T)=V^{*}TV$. Then $\Phi(T)=T$ if $T\in L(\cS)$ (viewing $L(\cS)$ inside  $L(\cR)$) and 
\[\Phi(f\lambda_{\cR}(\gamma))=f1_{\{x:[\gamma^{-1}(x)]_{\cS}=[x]_{\cS}\}}\lambda_{\cR}(\gamma), \textnormal{ for all $f\in L^{\infty}(X),\gamma\in[\cR]$.}\]
Since $\Phi$ is WOT-WOT continuous, and 
\[L(\cR)=\overline{\Span\{f\lambda_{\cR}(\gamma):f\in L^{\infty}(X),\gamma\in[\cR]\}}^{WOT},\]
it follows that $\Phi(L(\cR))\subseteq L(\cS)$. We set $\E_{L(\cS)}=\Phi|_{L(\cR)}$.

Our ultimate goal in this section is to prove Theorem \ref{thm: I said the real intro thm}. Following \cite[Section 3]{BMOFull}, our strategy will hinge on working with a relation $\widehat{\cS}$ containing $\cS$ satisfying that $\Fix_{\widehat{\cS}}(L^{\infty}(X)^{\omega})'\cap L(\cR)=L(\widehat{\cS})$. It will be helpful (both on a technical and a conceptual level) to rephrase this condition in several equivalent ways.

\begin{prop}\label{prop: TFAE commutant condition}
 Let $\cS\leq \cR$ be discrete, probability measure-preserving equivalence relations on a standard probability space $(X,\mu)$. Fix $\omega\in \beta\N\setminus \N$. The following are equivalent:
 \begin{enumerate}[(i)]
 \item $\Fix_{\cS}(L^{\infty}(X)^{\omega})'\cap L(\cR)=L(\cS)$, \label{item: asymptotic bicentralizer}
 \item $[\cS]=\{\gamma\in [\cR]:\alpha_{\gamma}(f)=f \textnormal{ for all $f\in \Fix_{\cS}(L^{\infty}(X)^{\omega})$}\}$, \label{item: asymptotic bifixing}
 \item if $\gamma\in [\cR]$, $f\in L^{\infty}(X)$ satisfy that $\alpha_{\gamma}(k)f=kf$ for every $k\in \Fix_{\cS}(L^{\infty}(X)^{\omega})$, then 
 \[\mu(\{x:f(x)\ne 0\}\setminus\{x:[\gamma^{-1}(x)]_{\cS}=[x]_{\cS}\})=0.\]
 \label{item:slightly messier condition}
 \end{enumerate}
\end{prop}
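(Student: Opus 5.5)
The natural route is (i)$\Rightarrow$(iii)$\Rightarrow$(ii)$\Rightarrow$(i), since (iii) is the "local" version of (ii) and is designed to be used inside WOT-dense spans. Throughout write $A=\Fix_{\cS}(L^{\infty}(X)^{\omega})$ and $E_{\gamma}=\{x:[\gamma^{-1}(x)]_{\cS}=[x]_{\cS}\}$. Note first that $L(\cS)\subseteq A'\cap L(\cR)$ always holds. Indeed, $L^{\infty}(X)$ commutes with $L^{\infty}(X)^{\omega}$. Moreover, for $\sigma\in[\cS]$ and $k=(k_n)_{n\to\omega}\in A$ we have $\sigma k\sigma^{-1}=\alpha_{\sigma}(k)=k$. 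Here we interpret $L(\cR)$ and $L^{\infty}(X)^{\omega}$ inside $L(\cR)^{\omega}$, so commutation means commutation there. We then pass to WOT closures, using that the commutant condition is closed on bounded sets: if $x_i\to x$ SOT-boundedly, then $\|[x_i-x,k_n]\|_2$ is controlled uniformly in $n$ only after a standard $\|\cdot\|_2$ estimate with $k_n$ bounded. This gives the trivial inclusion in (i) and the trivial inclusion $[\cS]\subseteq$ RHS in (ii).

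For (iii)$\Rightarrow$(ii): let $\gamma\in[\cR]$ fix $A$ pointwise. Apply (iii) with $f=1$ to get $\mu(X\setminus E_{\gamma})=0$, i.e.\ $\gamma(x)\in[x]_{\cS}$ a.e., so $\gamma\in[\cS]$.

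For (ii)$\Rightarrow$(i): take $T\in A'\cap L(\cR)$ and expand it in $L^{2}(\cR)$ along a countable family of partial isometries. Concretely, choose a countable group $G\leq[\cR]$ generating $\cR$ and a partition $X=\bigsqcup_{g}F_{g}$ so that the pieces $1_{F_g}g$ have pairwise disjoint graphs covering $\cR$. Then write $T1_{\Delta}=\sum_{v}f_{v}v$ in $L^{2}(\cR)$, where $v$ ranges over countably many elements of $[[\cR]]$ with disjoint graphs and $f_v\in L^2(X)$. Commuting $T$ with $k\in A$ and comparing Fourier coefficients, which is legitimate because $k$ acts diagonally on coefficients after passing to representatives, gives $f_{v}\,(\alpha_{v}(k)-k)=0$ in the ultrapower sense for each $v$. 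To turn this into (ii), I would show that the set $D_v=\{f_v\neq0\}$ can be enlarged. Extend $v|_{D_v}$ to some $\gamma\in[\cR]$ agreeing with $v$ on $D_v$, modified off $D_v$ by elements of $[\cS]$. Passing from partial to full elements is the delicate point; it is really the content of (iii). So I would instead prove (ii)$\Rightarrow$(iii) and (i)$\Rightarrow$(iii)$\Rightarrow$(i).

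For (iii)$\Rightarrow$(i): by the above, $k(f v)=(fv)k$ forces $\{f_v\ne 0\}\subseteq E_v$ a.e. Hence $T1_{\Delta}$ is supported on $\cS$, so $T=\E_{L(\cS)}(T)\in L(\cS)$.

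For (i)$\Rightarrow$(iii): given $\gamma,f$ as in (iii), consider $x=\E_{L(\cS)}$-free part $y=f\gamma-\E_{L(\cS)}(f\gamma)=f1_{X\setminus E_{\gamma}}\gamma$. Since $\alpha_{\gamma}(k)f=kf$, the element $f\gamma$ commutes with $A$. The element $\E_{L(\cS)}(f\gamma)$ commutes with $A$ because it lies in $L(\cS)$. Hence $y\in A'\cap L(\cR)=L(\cS)$, while $y$ is supported off $\cS$, so $y=0$. That is exactly (iii).

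For (ii)$\Rightarrow$(iii), given $\gamma,f$, set $D=\{f\ne0\}\setminus E_{\gamma}$. The hypothesis implies $\alpha_{\gamma}(k)1_{D}=k1_{D}$. I then want a full $\tilde\gamma\in[\cR]$ with $\tilde\gamma=\gamma$ on $\gamma^{-1}$ of a positive-measure subset of $D$, still fixing $A$, yet not in $[\cS]$. I would build it as an involution: swap a small $D_0\subseteq D$ with $\gamma^{-1}(D_0)$, where $D_0\cap\gamma^{-1}(D_0)=\emptyset$, which is arranged by a maximality argument. Act as the identity elsewhere. Checking $\alpha_{\tilde\gamma}(k)=k$ uses $k1_{D_0}=\alpha_\gamma(k)1_{D_0}$ together with its conjugate by $\gamma^{-1}$. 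Then $\tilde\gamma\notin[\cS]$ contradicts (ii). I expect this surgery, together with the Fourier-coefficient argument in the ultrapower, to be the main obstacle.
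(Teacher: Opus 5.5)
Your proof is correct. Your (i)$\Rightarrow$(iii) and (iii)$\Rightarrow$(ii) are essentially the paper's: the paper gets $fu_\gamma\in L(\cS)$ straight from (i) and compares it with $\E_{L(\cS)}(fu_\gamma)=f1_{E_\gamma}u_\gamma$, while you subtract the conditional expectation instead.

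The genuine difference is how you close the cycle. The paper proves (ii)$\Rightarrow$(i) in a few lines via Dye's correspondence \cite[Proposition 6.1]{Dye}. The algebra $\Fix_{\cS}(L^{\infty}(X)^{\omega})'\cap L(\cR)$ sits between $L^\infty(X)$ and $L(\cR)$, so it equals $L(\widehat{\cS})$ for some $\cS\leq\widehat{\cS}\leq\cR$. The full group of $\widehat{\cS}$ is exactly the stabilizer of $\Fix_{\cS}(L^{\infty}(X)^{\omega})$, so (ii) forces $\widehat{\cS}=\cS$.

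You avoid that theorem with two direct arguments.
\begin{itemize}
\item For (ii)$\Rightarrow$(iii) you use the involution surgery. A positive-measure $D_0$ with $D_0\cap\gamma^{-1}(D_0)=\emptyset$ exists simply because $\gamma^{-1}(x)\neq x$ on $D$.
\item For (iii)$\Rightarrow$(i) you expand $T$ along a countable generating group $G$ and use
\[\|Tk_n-k_nT\|_2^2=\sum_{g}\|f_g(\alpha_g(k_n)-k_n)\|_2^2.\]
\end{itemize}
Your route is elementary and self-contained, and it makes explicit the partial-versus-full-group issue that the correspondence theorem hides. The paper's route is shorter, and it reuses the same structural result it needs anyway to build $\widehat{\cS}$ in Theorem~\ref{thm: the main result}.

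Two small repairs are needed. First, the sets $F_g$ are not a partition of $X$. They should be chosen so that the pieces $\{(x,g^{-1}(x)):x\in F_g\}$ are pairwise disjoint and cover $\cR$. Second, the coefficients satisfy $f_g=\E_{L^\infty(X)}(Tu_g^{*})1_{F_g}\in L^\infty(X)$ with $\|f_g\|_\infty\leq\|T\|$, not merely $f_g\in L^2(X)$. This is exactly what lets you apply (iii) with the full element $\gamma=g$ and $f=f_g$. So, contrary to your worry, no extension of partial isometries to full group elements is needed in (iii)$\Rightarrow$(i).
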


\begin{proof}

(\ref{item: asymptotic bicentralizer}) implies (\ref{item:slightly messier condition}):Suppose $f,\gamma$ satisfy the assumptions of 
(\ref{item:slightly messier condition}), and set $u_{\gamma}=\lambda_{\cR}(\gamma)$. Then $fu_{\gamma}$ commutes with $k$ for all $k\in \Fix_{\cS}(L^{\infty}(X)^{\omega})$. Thus by (\ref{item: asymptotic bicentralizer}), it follows that $fu_{\gamma}\in L(\cS)$. Hence
\[fu_{\gamma}=\E_{L(\cS)}(fu_{\gamma})=f1_{\{x:[\gamma^{-1}(x)]_{\cS}=[x]_{\cS}\}}u_{\gamma}.\]
Since $u_{\gamma}$ is a unitary, it follows that $f=f1_{\{x:[\gamma^{-1}(x)]_{\cS}=[x]_{\cS}\}}$ almost everywhere.

(\ref{item:slightly messier condition}) implies (\ref{item: asymptotic bifixing}): Suppose $\alpha_{\gamma}(k)=k$ for all $k\in L^{\infty}(X)^{\omega}$. Then  $\gamma$ satisfies (\ref{item:slightly messier condition}) with $f=1$. The conclusion  of (\ref{item:slightly messier condition}) then says that $\gamma\in [\cS]$. So $[\cS]\supseteq \{\gamma\in [\cR]:\alpha_{\gamma}(f)=f \textnormal{ for all $f\in \Fix_{\cS}(L^{\infty}(X)^{\omega})$}\}$. The reverse inclusion is an exercise.

(\ref{item: asymptotic bifixing}) implies (\ref{item: asymptotic bicentralizer}): By \cite[Proposition 6.1]{Dye}, there is a probability measure-preserving relation $\widehat{\cS}$ with $\cS\leq \widehat{\cS}\leq \cR$ so that 
\[L(\widehat{\cS})=\Fix_{\cS}(L^{\infty}(X)^{\omega})'\cap L(\cR).\]
Since $u_{\gamma}fu_{\gamma}^{-1}=\alpha_{\gamma}(f)$ for all $f\in L^{\infty}(X)^{\omega},\gamma\in [\cR]$, it follows that  
\[[\widehat{\cS}]=\{\gamma\in [\cR]:\alpha_{\gamma}(f)=f \textnormal{ for all $f\in \Fix_{\cS}(L^{\infty}(X)^{\omega})$}\}.\]
Our hypothesis thus says that $[\widehat{\cS}]=[\cS]$, and thus $\widehat{\cS}=\cS$ modulo null sets. So $L(\widehat{\cS})=L(\cS)$.

\end{proof}

The following Lemma is the analogue of \cite[Lemma 3.4]{BMOFull}

\begin{lem}\label{lem: analogue of BMO}
Let $\cS\leq \cR$ be discrete, probability measure-preserving equivalence relations on a standard probability space $(X,\mu)$. Suppose that $\cS$ satisfies 
\[L(\cS)=\Fix_{\cS}(L^{\infty}(X)^{\omega})'\cap L(R).\]
Then $L^{2}(\cR/\cS)\preceq L^{2}(X)$ as representations of $C^{*}(\cR)$. 

\end{lem}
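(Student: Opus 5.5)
The plan is to exhibit a single vector in $L^{2}(\cR/\cS)$ that is cyclic for $\lambda_{\cR/\cS}$, and to show that its vector state is a weak$^{*}$-limit of convex combinations of vector states of $\alpha$ on $L^{2}(X)$. The reduction to one vector is handled by Proposition \ref{prop:dense set reduction} with $B=\{f\gamma: f\in L^{\infty}(X),\gamma\in[\cR]\}$.

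\emph{Step 1: the cyclic vector and its state.} Let $\xi\in L^{2}(\cR/\cS)$ be given by $\xi_{x}=\delta_{[x]_{\cS}}\in \ell^{2}([x]_{\cR}/\cS)$. Its $\C(\cR)$-translates span a dense subspace, since $\lambda_{\cR/\cS}(f\gamma)\xi$ is the field $x\mapsto f(x)\delta_{[\gamma^{-1}(x)]_{\cS}}$. A direct computation gives
\[\ip{\lambda_{\cR/\cS}(f\gamma)\xi,\xi}=\int_{X} f\,1_{\{x:[\gamma^{-1}(x)]_{\cS}=[x]_{\cS}\}}\,d\mu=\tau(\E_{L(\cS)}(f u_{\gamma})),\]
where $u_{\gamma}=\lambda_{\cR}(\gamma)$. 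So by Proposition \ref{prop:dense set reduction} it suffices to approximate, on finitely many elements $f_{s}\gamma_{j}$, the functional $\psi(y)=\tau(\E_{L(\cS)}(y))$ by convex combinations of the functionals $y\mapsto \ip{\alpha(y)h,h}$ with $h$ a unit vector in $L^{2}(X)$.

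\emph{Step 2: vector states from $\Fix_{\cS}(L^{\infty}(X)^{\omega})$.} Put $A=\Fix_{\cS}(L^{\infty}(X)^{\omega})$. Take a unitary $k=(k_{n})_{n\to\omega}\in A$ with each $k_{n}$ an $S^{1}$-valued function, so that each $k_{n}$ is a unit vector in $L^{2}(X)$. Then
\[\lim_{n\to\omega}\ip{\alpha(fu_{\gamma})k_{n},k_{n}}=\lim_{n\to\omega}\int f\,\alpha_{\gamma}(k_{n})\overline{k_{n}}\,d\mu=\tau_{\omega}(k^{*}fu_{\gamma}k),\]
computed inside $L(\cR)^{\omega}\supseteq L^{\infty}(X)^{\omega}$, where $u_{\gamma}ku_{\gamma}^{*}=\alpha_{\gamma}(k)$. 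Convex combinations over finitely many such $k$ therefore realise $y\mapsto \tau_{\omega}(\sum_{i}t_{i}k_{i}^{*}yk_{i})$ as weak$^{*}$-limits of convex combinations of vector states of $\alpha$.

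Since $A$ is abelian, hence amenable, the norm-closed convex hull of $\{k^{*}yk:k\in\cU(A)\}$ in $L^{2}(L(\cR)^{\omega})$ contains $\E_{A'\cap L(\cR)^{\omega}}(y)$; this is the standard Dixmier/minimal-norm argument. Passing to the trace, it then suffices to prove
\[\tau_{\omega}(\E_{A'\cap L(\cR)^{\omega}}(fu_{\gamma}))=\tau(\E_{L(\cS)}(fu_{\gamma})).\]

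\emph{Step 3: computing the expectation.} Let $P=\{x:[\gamma^{-1}(x)]_{\cS}=[x]_{\cS}\}$ and split $fu_{\gamma}=f1_{P}u_{\gamma}+f1_{X\setminus P}u_{\gamma}$.

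The first term lies in $L(\cS)$. It commutes with $A$, since $A$ is $[\cS]$-fixed and $f1_{P}u_{\gamma}$ is a countable piecewise combination of elements of $[[\cS]]$. Hence the first term is fixed by the expectation and contributes exactly $\tau(\E_{L(\cS)}(fu_{\gamma}))$.

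It remains to show $\tau_{\omega}(\E_{A'\cap L(\cR)^{\omega}}(gu_{\gamma}))=0$ when $g$ vanishes on $P$. I expect this to be the main obstacle. The hypothesis only describes $A'\cap L(\cR)$, through Proposition \ref{prop: TFAE commutant condition}(\ref{item:slightly messier condition}), whereas the expectation lands in the larger algebra $A'\cap L(\cR)^{\omega}$.

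My first attempt will argue directly on traces. One has $\tau_{\omega}(k^{*}gu_{\gamma}k)=\tau_{\omega}(g\,\alpha_{\gamma}(k)k^{*})$, and I will show that $\inf$ over convex combinations of $|\tau_{\omega}(g\,\alpha_{\gamma}(k_{i})k_{i}^{*})|$ is $0$, arguing by contradiction. If the infimum were positive, then the minimal-norm element $z$ of the closed convex hull of $\{\alpha_{\gamma}(k)k^{*}\}$ would satisfy $\tau_{\omega}(gz)\neq0$. Applying a diagonal reindexing argument to the ultrapower (as in \cite{BMOFull}), I would produce an $L^{\infty}(X)$-valued object $h$, supported where $g\neq0$ and so on $X\setminus P$, such that $\alpha_{\gamma}(k)h=kh$ for all $k\in A$. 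The reindexing must keep the relevant elements of $A$ inside $A$, which holds because $A$ is defined by countably many fixed-point conditions for a countable generating subgroup of $[\cS]$. Proposition \ref{prop: TFAE commutant condition}(\ref{item:slightly messier condition}) then forces $h=0$ off $P$, a contradiction. Once the displayed identity holds for all $f\in L^{\infty}(X)$ and $\gamma\in[\cR]$, Proposition \ref{prop:dense set reduction} gives $L^{2}(\cR/\cS)\preceq L^{2}(X)$.
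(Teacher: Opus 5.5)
There is a genuine gap. Your cyclic vector $\xi_{x}=\delta_{[x]_{\cS}}$, its state $\int f1_{P}\,d\mu$, and the reduction via Proposition \ref{prop:dense set reduction} are fine and match the paper. The problem starts when you rewrite vector states of $\lambda_{\cR/\cS}$ and $\alpha$ as trace functionals on $L(\cR)$ and $L(\cR)^{\omega}$. Both rewritings are false. For $h\in L^{\infty}(X)$ one has $\tau(hu_{\gamma})=\int_{\Fix(\gamma)}h\,d\mu$. Hence $\tau(\E_{L(\cS)}(fu_{\gamma}))=\tau(f1_{P}u_{\gamma})=\int_{\Fix(\gamma)}f\,d\mu$, not $\int f1_{P}\,d\mu$. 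Likewise $\tau_{\omega}(k^{*}fu_{\gamma}k)=\tau_{\omega}(f\overline{k}\alpha_{\gamma}(k)u_{\gamma})=\int_{\Fix(\gamma)}f\,d\mu$, not $\lim_{n\to\omega}\int f\alpha_{\gamma}(k_{n})\overline{k_{n}}\,d\mu$. Already $\cS=\cR$ (resp.\ $k=1$) with any $\gamma\neq\id$ breaks these.

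The functional that actually governs $L^{2}(X)$ is the vector state $\varphi_{1}=\ip{\alpha(\cdot)1,1}$, with $\varphi_{1}(fu_{\gamma})=\int f\,d\mu$. For $h\in L^{\infty}(X)$ and $y\in\C(\cR)$ one has $\ip{\alpha(y)h,h}=\varphi_{1}(h^{*}yh)$ and $\ip{\lambda_{\cR/\cS}(y)\xi,\xi}=\varphi_{1}(\E_{L(\cS)}(y))$. But $\varphi_{1}\neq\tau$, and in general $\varphi_{1}$ does not extend to a normal state on $L(\cR)$. So Dixmier averaging in $L^{2}(L(\cR)^{\omega})$ followed by ``passing to the trace'' computes the wrong quantity.

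A symptom of this: the identity you reduce to, $\tau_{\omega}(\E_{A'\cap L(\cR)^{\omega}}(fu_{\gamma}))=\tau(\E_{L(\cS)}(fu_{\gamma}))$, holds for every $\cS\leq\cR$ with no hypothesis at all, because $\tau_{\omega}\circ\E_{A'\cap L(\cR)^{\omega}}=\tau_{\omega}$ and $\Fix(\gamma)\subseteq P$. So your ``main obstacle'' is vacuous in your own formalism. Yet the conclusion of the lemma genuinely fails without the hypothesis. For example, if $\cS$ is strongly ergodic and $\mu_{\cR}(\cR\setminus\cS)>0$, spectral gap for $[\cS]$ forces the approximating unit vectors to be nearly constant, which is incompatible with $\varphi_{\xi}(1_{X\setminus P}u_{\gamma})=0$. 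Note also that in Step 3 you write $\tau_{\omega}(k^{*}gu_{\gamma}k)=\tau_{\omega}(g\alpha_{\gamma}(k)k^{*})$, which silently drops $u_{\gamma}$, i.e.\ switches to $\varphi_{1}$ midway.

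With the correct functional, the averaging must be done on the coefficients $\overline{v}\alpha_{\gamma_{j}}(v)\in L^{\infty}(X)$. The real content is then that $(1_{P_{j}})_{j}$ lies in the weak$^{*}$-closed convex hull of $\{(\overline{v}\alpha_{\gamma_{j}}(v))_{j}:v\in\Meas(X,S^{1})\}$. Your plan for the part off $P$ skips exactly the hard step. Your minimal-norm $z$ does satisfy $\alpha_{\gamma}(k)z=kz$ for $k\in A$, but it lives in $L^{\infty}(X)^{\omega}$. Proposition \ref{prop: TFAE commutant condition}(iii) applies only to elements of $L^{\infty}(X)$, and passing to the weak$^{*}$-limit $\E_{L^{\infty}(X)}(z)$ destroys that relation. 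No ``diagonal reindexing'' argument is given that repairs this.

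The paper's device is to minimize inside $L^{\infty}(X)^{n}$ instead:
\begin{enumerate}[(i)]
\item Let $C$ be the intersection, over finite $F\subseteq[\cS]$ and $\varepsilon>0$, of the weak$^{*}$-closed convex hulls of $\{(\overline{v}\alpha_{\gamma_{j}}(v))_{j}\}$ over $(F,\varepsilon)$-almost invariant $v$. Take $k\in C$ of minimal $\|\cdot\|_{2}$.
\item For $v=(v_{n})_{n\to\omega}\in\cU(A)$, set $\widetilde{y}=(\alpha_{\gamma_{j}}(v)\overline{v}k_{j})_{j}$ and let $y$ be its weak$^{*}$-limit. Then $y\in C$ and $\|y\|_{2}\leq\|\widetilde{y}\|_{2}=\|k\|_{2}$, so $y=k$.
\item The Pythagorean identity $\|y-\widetilde{y}\|_{2}^{2}=\|\widetilde{y}\|_{2}^{2}-\|y\|_{2}^{2}$ then forces $\alpha_{\gamma_{j}}(v)k_{j}=vk_{j}$, now with $k_{j}\in L^{\infty}(X)$.
\item Proposition \ref{prop: TFAE commutant condition}(iii) gives $k_{j}=k_{j}1_{P_{j}}$, and almost invariance gives $k_{j}1_{P_{j}}=1_{P_{j}}$. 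Hence $k=(1_{P_{j}})_{j}$.
\end{enumerate}
Your split of $fu_{\gamma}$ along $P$ corresponds to the last two facts, but the mechanism that makes the hypothesis applicable is missing.
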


\begin{proof}

\emph{Claim 1: Let $\gamma=(\gamma_{1},\cdots,\gamma_{n})\in [\cR]$. be given. For $v\in \Meas(X,S^{1})$, set $v_{\gamma}=(\overline{v}\alpha_{\gamma_{j}}(v))_{j=1}^{n}$. Then}
\[(1_{\{x:[\gamma_{j}^{-1}(x)]_{\cS}=[x]_{\cS}\}})_{j=1}^{n}\in \overline{co}^{wk^{*}}\{v_{\gamma}:v\in \Meas(X,S^{1})\}.\]

Let us explain why Claim 1 implies the Lemma. Recall that $L^{2}(\cR/\cS)$ as a representation of $C^{*}(\cR)$ has $\xi_{\cR/\cS}$ as a cyclic vector, where $\xi_{\cR/\cS}(x,c)=1_{x\in c}$.  Given $f_{1},\cdots,f_{n}\in L^{\infty}(X)$ and $\gamma_{1},\cdots,\gamma_{n}\in [\cR]$, we have that:
\[\ip{f_{j}\lambda_{\cR/\cS}(\gamma_{j})\xi_{\cR/\cS},\xi_{\cR/\cS}}=\int f_{j}1_{\{x:[\gamma_{j}^{-1}(x)]_{\cS}=[x]_{\cS}\}}\,d\mu.\]
Let $\varepsilon>0$. By claim $1$,  we may find $v_{1},\cdots,v_{k}\in \Meas(X,S^{1})^{n}$ and $\lambda_{1},\cdots,\lambda_{k}\in [0,1]$ with $\sum_{j=1}^{k}\lambda_{j}=1$ and
\[\left|\int f_{j}1_{\{x:[\gamma_{j}^{-1}(x)]_{\cS}=[x]_{\cS}\}}\,d\mu-\sum_{i=1}^{k}\lambda_{i}\int f_{j}\overline{v_{i}}\alpha_{\gamma_{j}}(v_{i})\,d\mu\right|<\varepsilon, \textnormal{ for all $j=1,\cdots,n$}.\]
I.e.
\[\left|\ip{f_{j}\lambda_{\cR/\cS}(\gamma_{j})\xi_{\cR/\cS},\xi_{\cR/\cS}}-\sum_{i=1}^{k}\lambda_{j}\ip{f_{j}\alpha_{\gamma_{j}}(v_{i}),v_{i}}\right|<\varepsilon.\]
Since $f_{1},\cdots,f_{n}\in L^{\infty}(X)^{n},\gamma_{1},\cdots,\gamma_{n}\in [\cR]$ are arbitrary, this shows that $L^{2}(\cR/\cS)\preceq L^{2}(X)$ as representations of $C^{*}(\cR)$.

We now proceed to the proof of Claim 1. Fix $\gamma_{1},\cdots,\gamma_{n}\in [\cR]$. 
For finite $F\subseteq [\cS]$ and $\varepsilon>0$, we let 
\[E_{F,\varepsilon}=\bigcap_{\sigma\in F}\{v\in \Meas(X,S^{1}):\|\alpha_{\sigma}(v)-v\|_{2}<\varepsilon\}.\]
We then set
\[C_{F,\varepsilon}=\overline{\co}^{wk^{*}}\{v_{\gamma}:v\in E_{F,\varepsilon}\}\subseteq L^{\infty}(X)^{n},\]
and set
\[C=\bigcap_{F,\varepsilon}C_{F,\varepsilon}.\]
So $C$ is a nonempty, compact, convex set in $L^{\infty}(X)^{n}$. For later use, it will be help to note the following.

\emph{Claim 2. For all $(f_{1},\cdots,f_{n})\in C$, we have that $f_{j}1_{\{x:[\gamma_{j}^{-1}(x)]_{\cS}=[x]_{\cS}\}}=1_{\{x:[\gamma_{j}^{-1}(x)]_{\cS}=[x]_{\cS}\}}$ almost everywhere for all $j=1,\cdots,n$}
To see this, note that we may find $\sigma_{1},\cdots,\sigma_{n}\in [\cS]$ so that 
\[\sigma_{j}^{-1}|_{\{x:[\gamma_{j}^{-1}(x)]_{\cS}=[x]_{\cS}\}}=\gamma_{j}^{-1}|_{\{x:[\gamma_{j}^{_1}(x)]_{\cS}=[x]_{\cS}\}}, \text{ for all $j=1,\cdots,n$}.\]
Set $F=\{\sigma_{1},\cdots,\sigma_{n}\}$. Then for every $v\in E_{F,\varepsilon}$ we have that 
\[\|\overline{v}\alpha_{\gamma_{j}}(v)1_{\{x:[\gamma_{j}^{-1}(x)]_{\cS}=[x]_{\cS}\}}-1_{\{x:\gamma_{j}^{-1}(x)]_{\cS}=[x]_{\cS}\}}\|_{2}<\varepsilon \text{ for all $j=1,\cdots,n$.}\]
By weak$^{*}$-semicontinuity of $\|\cdot\|_{2}$ and the triangle inequality, we have that $\{f\in L^{\infty}(X):\|f\|_{2}\leq\varepsilon\}$ is weak$^{*}$-closed and convex. Hence, we have that $\|f_{j}1_{\{x:\gamma_{j}^{-1}(x)]_{\cS}=[x]_{\cS}}-1_{\{x:\gamma_{j}^{-1}(x)]_{\cS}=[x]_{\cS}}\|_{2}\leq \varepsilon$ for all $(f_{1},\cdots,f_{n})\in C_{F,\varepsilon}$. In particular,
\[\|f_{j}1_{\{x:\gamma_{j}^{-1}(x)]_{\cS}=[x]_{\cS}\}}-1_{\{x:\gamma_{j}^{-1}(x)]_{\cS}=[x]_{\cS}\}}\|_{2}\leq \varepsilon, \text{ for all $(f_{1},\cdots,f_{n})\in C$ and all $j=1,\cdots,n$}.\]
Since $\varepsilon>0$ was arbitrary, this proves Claim 2.

We now finish the proof of Claim 1. Since $C$ is weak$^{*}$-closed and bounded as a subset of $L^{\infty}(X)^{n}$, it is also $\|\cdot\|_{2}$-closed as a subset of $L^{2}(X)^{n}$. Since $C$ is then closed and convex as a subset of $L^{2}(X)^{n}$, we can let $k=(k_{1},\cdots,k_{n})\in C$ be an element with minimal $\|\cdot\|_{2}$ in $C$. Given $v\in \cU(L^{\infty}(X)^{\omega}\cap L(\cS)')$, set $\widetilde{y}=(\alpha_{\gamma_{j}}(v)\overline{v}k_{j})_{j=1}^{n}$, and write $v=(v_{n})_{n\to\omega}$ with $v_{n}\in \Meas(X,S^{1})$. Set
\[y_{j}=wk^{*}-\lim_{n\to\omega}\alpha_{\gamma_{j}}(v_{n})\overline{v_{n}}k_{j},\]
i.e. $y_{j}$ is the unique element of $L^{\infty}(X)$ satisfying
\[\int y_{j}h\,d\mu=\lim_{n\to\omega}\int \alpha_{\gamma_{j}}(v_{n})\overline{v_{n}}k_{j}h\,d\mu \text{ for all $h\in L^{1}(X)$}.\]
Finally, set $y=(y_{1},\cdots,y_{n})$, and note that
\[\|y-\widetilde{y}\|_{2}^{2}=\|\widetilde{y}\|_{2}^{2}-\|y\|_{2}^{2}.\]
Then $y\in C$, and $\|y\|_{2}\leq \|\widetilde{y}\|_{2}=\|k\|_{2}$. By minimality $y=k$ for all $j=1,\cdots,n$. This forces $\|\widetilde{y}\|_{2}=\|y\|_{2}$, and by the above we conclude that $\widetilde{y}=y$. 
So we have shown that
\[\alpha_{\gamma_{j}}(v)k_{j}=vk_{j}, \textnormal{for all $j=1,\cdots,n$ and all $v\in \cU(\Fix_{\cS}(L^{\infty}(X)^{\omega}))$}.\]
Since every $f\in \Fix_{\cS}(L^{\infty}(X))^{\omega})$ is a linear combination of four elements of $\cU(\Fix_{\cS}(L^{\infty}(X)^{\omega}))$, it follows that 
\[\alpha_{\gamma_{j}}(f)k_{j}=fk_{j}, \textnormal{for all $j=1,\cdots,n$ and all $f\in \Fix_{\cS}(L^{\infty}(X)^{\omega})$}.\]
Hence by Proposition \ref{prop: TFAE commutant condition} it follows that $k_{j}=k_{j}1_{\{x:[\gamma_{j}^{-1}(x)]_{\cS}=[x]_{\cS}\}}$ almost everywhere. Claim 2 now completes the proof of Claim 1.

\end{proof}

If $\cS\leq \cR$ is an inclusion of probability measure-preserving orbit equivalence relations, we will need to characterize when $L^{2}(\cR/\cS)$ has a $[\cR]$-invariant vector. If $\cR$ is ergodic, note that $x\mapsto |[x]_{\cR}/\cS|$ is an $\cR$-invariant measurable function. Hence it is almost surely constant. The \emph{index} of $\cS$ in $\cR$ is, by definition, the almost sure value of this function. We denote the index by $[\cR:\cS]$. 

\begin{prop}\label{prop:translation of invariant vectors}
Suppose that $\cS\leq \cR$ are discrete, measure-preserving equivalence relations on a standard probability space $(X,\mu)$, and that $\cR$ is ergodic. Then the representation $L^{2}(\cR/\cS)$ has a nonzero $[\cR]$-invariant vector if and only if there is a $\cS$-invariant measurable set $E\subseteq X$ with $\mu(E)>0$ and so that
\[[\cR\big|_{E}:\cS\big|_{E}]<\infty.\]

\end{prop}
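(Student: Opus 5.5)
The plan is to translate $[\cR]$-invariant vectors in $L^{2}(\cR/\cS)=\int_{X}^{\oplus}\ell^{2}([x]_{\cR}/\cS)\,d\mu(x)$ into $\cR$-invariant measurable fields, and then read off finiteness of the index from square-summability. Since $\lambda_{\cR/\cS}(x,y)=\id$, a vector $\xi=(\xi_{x})_{x}$ is $[\cR]$-invariant if and only if $\xi_{\gamma^{-1}(x)}=\xi_{x}$ almost everywhere for every $\gamma\in[\cR]$. Choosing a countable group $G\leq[\cR]$ with $\cR=\cR_{G,X}$ (as in the proof of Proposition \ref{prop: rep correspond}), this is equivalent to: for almost every $x$, $\xi_{y}=\xi_{x}$ for all $y\in[x]_{\cR}$. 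So an invariant vector is the same as a function $\Xi$ on the set of $\cS$-classes, defined on each $\cR$-class, with $\sum_{c\in[x]_{\cR}/\cS}|\Xi(c)|^{2}$ finite and measurable in $x$.

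\emph{Forward direction.} Let $\xi\neq 0$ be invariant. Define $h(x)=|\xi_{x}([x]_{\cS})|$. This is measurable by the definition of the measurable structure (take $\gamma=\id$), and it is $\cS$-invariant, since $y\in[x]_{\cS}$ gives $\xi_{y}=\xi_{x}$ and $[y]_{\cS}=[x]_{\cS}$. Invariance and $\xi\neq0$ give some $x$ with $\xi_{x}(c)\neq 0$ for some class $c\subseteq[x]_{\cR}$. Picking $y\in c$ gives $h(y)\neq0$, so $h$ is not almost everywhere zero.

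Because $\|\xi_{x}\|$ is $\cR$-invariant, ergodicity makes it essentially constant, say $\|\xi_{x}\|=M$. Let $t>0$ and $E=\{x: h(x)\geq t\}$, which has positive measure for small $t$ and is $\cS$-invariant. For $x\in E$, the $\cS$-classes of $[x]_{\cR}$ meeting $E$ are exactly those $c$ with $|\Xi(c)|\geq t$. There are at most $M^{2}/t^{2}$ of them. This number is precisely $|[x]_{\cR|_{E}}/\cS|_{E}|$, hence $[\cR|_{E}:\cS|_{E}]<\infty$. Since $\cR|_{E}$ need not be ergodic, I would read ``index'' here as the essential supremum of that function, or replace $E$ by a level set $\{x\in E: |[x]_{\cR|_E}/\cS|_{E}|=m\}$; that level set is $\cR|_{E}$-invariant and, being a union of $\cS$-classes, still $\cS$-invariant.

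\emph{Reverse direction.} Suppose $E$ is $\cS$-invariant with $\mu(E)>0$ and $[\cR|_{E}:\cS|_{E}]<\infty$. Define $\Xi(c)=1$ if $c\subseteq E$ and $0$ otherwise, and set $\xi_{x}=\Xi|_{[x]_{\cR}/\cS}$. This is invariant by construction, and nonzero because $\xi_{x}([x]_{\cS})=1$ for $x\in E$. For measurability, note that $x\mapsto\xi_{x}([\gamma(x)]_{\cS})=1_{E}(\gamma(x))$ is measurable.

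The main obstacle is square-integrability. I need $\|\xi_{x}\|^{2}=|\{c\in[x]_{\cR}/\cS: c\subseteq E\}|$ to be bounded for $x$ outside $E$ too. This number equals the index of $\cR|_{E}$ over $\cS|_{E}$ at any point of $E\cap[x]_{\cR}$, so it is $\cR$-invariant. By ergodicity it is therefore essentially constant on $[E]_{\cR}$, and $[E]_{\cR}$ is conull. This yields $\|\xi\|^{2}=[\cR|_{E}:\cS|_{E}]<\infty$.

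I would also check the measurability of the counting function $x\mapsto|[x]_{\cR}/\cS|$ restricted to classes inside $E$. Using $G$, it can be written as a countable supremum of sums of indicators of the sets $\{x: g x\in E,\ [gx]_{\cS}\neq[hx]_{\cS}\}$, each of which is measurable.
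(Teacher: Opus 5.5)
Your proposal is correct and follows the paper's proof in both directions. For one implication you use the indicator of the $\cS$-classes contained in $E$ (the paper normalizes it by $1/\sqrt{n}$); for the other you use the $\cS$-invariant level set $\{x:|\xi_{x}([x]_{\cS})|\geq\varepsilon\}$ together with the counting bound $n\varepsilon^{2}\leq\|\xi_{x}\|^{2}$.

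Two small remarks. First, your hedge about $\cR|_{E}$ is unnecessary, since $\cR|_{E}$ is automatically ergodic when $\cR$ is ergodic and $\mu(E)>0$, so the index is a well-defined constant. Second, your pointwise argument that $h$ is not almost everywhere zero should be made measure-theoretic, as the paper does: argue by contrapositive, using invariance and the countability of $G$.
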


\begin{proof}
First suppose that there is a $\cS$-invariant measurable set $E\subseteq X$ of positive measure which satisfies
\[[\cR\big|_{E}:\cS\big|_{E}]<\infty.\]
Let $n=[\cR\big|_{E}:\cS\big|_{E}]<\infty.$
For $x\in X$, let $([x]_{\cR}\cap E)/\cS$ be the space of $\cS$-equivalence classes in $[x]_{\cR}\cap E$. Since $E$ is $\cS$-invariant, this is a well-defined subset of $[x]_{\cR}/\cS$. Since $\cR$ is ergodic,  we have that
\[|([x]_{\cR}\cap E)/\cS|=n\]
for almost every $x\in X$. Define $\xi\in L^{2}(\cR/\cS)$ by
\[\xi(x,c)=\frac{1}{\sqrt{n}}1_{([x]_{\cR}\cap E)/\cS}(c)\mbox{ for all $x\in X$.}\]
Then $\|\xi_{x}\|_{2}=1$ for almost every $x\in X$. Given $\gamma\in [\cR]$, we have
\[\xi_{x}([\gamma(x)]_{\cS})=\frac{1}{\sqrt{n}}1_{E}(\gamma(x))\]
for every $x\in X$. Thus $\xi$ is indeed a measurable vector field. It is direct to see that $\xi$ is $\cR$-invariant and not zero.

Conversely, suppose that $\xi\in L^{2}(\cR/\cS)$ is a nonzero $\cR$-invariant vector. Without loss of generality we may, and will, assume that $\|\xi\|_{2}=1$. Write $\xi=(\xi_{x})_{x}\in \Meas(\ell^{2}([x]_{\cR})/\cS)$. Choose a countable subgroup $\Delta\leq [\cR]$ so that $\Delta x=[x]_{\cR}$ for almost every $x\in X$. We may, and will, assume that there is a countable subgroup $\Lambda\leq \Delta$ so that $\Lambda x=[x]_{\cS}$ for all almost every $x\in X$. Saying that $\xi$ is $\cR$-invariant means that  for almost every $x\in X$ we have $\xi_{\gamma(x)}=\xi_{x}$ for every $\gamma\in \Delta$.  In particular, $\|\xi_{x}\|_{2}$ is a $\cR$-invariant function and thus essentially constant. We may thus find a conull subset $X_{0}$ of $X$ so that the following conditions hold:
\begin{itemize}
    \item $\|\xi_{x}\|_{2}=1$ for all $x\in X_{0}$,
    \item $\xi_{\gamma(x)}=\xi_{x}$ for all $x\in X_{0},\gamma\in\Delta$,
    \item $\Delta x=[x]_{\cR}$, and $\Lambda x=[x]_{\cS}$ for all $x\in X_{0}$,
    \item $\Delta x\subseteq X_{0}$, for all $x\in X_{0}$.
\end{itemize}

For $\varepsilon>0$, let $E_{\varepsilon}=\{x\in X_{0}:|\xi_{x}([x]_{\cS})|\geq \varepsilon\}$. Since $[\psi(x)]=[x]_{\cS}$ and $\xi_{\psi(x)}=\xi_{x}$ for all $\psi\in \Lambda$ and $x\in X_{0}$,   we see that $E_{\varepsilon}$ is $\cS$-invariant. We claim that $\mu(E_{\varepsilon})>0$ for some $\varepsilon>0$. Indeed, if this were false, then we would have that $\xi_{x}([x]_{\cS})=0$ for almost every $x\in X_{0}$. But then, by countability of $\Delta$, for almost every $x\in X_{0}$ we would have
\[\xi_{x}([\gamma(x)]_{\cS})=\xi_{\gamma(x)}([\gamma(x)]_{\cS})=0\]
for all $\gamma\in\Delta$. Since $\Delta x=[x]_{\cR}$ for all $x\in X_{0}$, this would force $\xi_{x}=0$ for almost every $x\in X$, contradicting our assumption that $\|\xi\|_{2}=1$.

So we may fix an $\varepsilon>0$ with $\mu(E_{\varepsilon})>0$. Suppose that $n\in \N$, that $x\in E_{\varepsilon}$ and that $\phi_{1},\cdots,\phi_{n}\colon E_{\varepsilon}\to E_{\varepsilon}$ are measurable functions satisfying
\begin{itemize}
    \item $\phi_{j}(x)\in [x]_{\cR}$ for all $j=1,\cdots,n$, and almost every $x\in E_{\varepsilon}$,
\item $[\phi_{j}(x)]_{\cS}\ne [\phi_{k}(x)]_{\cS}$ for all $1\leq j<k\leq n$.
\end{itemize} Then, for almost every $x\in E_{\varepsilon}$,
\[1=\|\xi_{x}\|_{2}^{2}\geq \sum_{j=1}^{n}|\xi_{x}([\phi_{j}(x)]_{\cS})|^{2}=\sum_{j=1}^{n}|\xi_{\phi_{j}(x)}([\phi_{j}(x)]_{\cS})|^{2}\geq n\varepsilon^{2},\]
where in the last step we use invariance of $\xi$ and the fact that $\phi_{j}(x)\in [x]_{\cR}$ for all $j=1,\cdots,n$ and almost every $x\in E_{\varepsilon}$. So we have $n\leq \frac{1}{\varepsilon^{2}}$. This proves that
\[[\cR\big|_{E_{\varepsilon}}:\cS\big|_{E_{\varepsilon}}]\leq \frac{1}{\varepsilon^{2}}.\]

\end{proof}

We remark that Proposition \ref{prop:translation of invariant vectors} is a natural analogue of the fact that if $\Lambda\leq \Gamma$ is an inclusion of discrete groups, then $\ell^{2}(\Gamma/\Lambda)$ has a $\Gamma$-invariant vector if and only if $[\Gamma:\Lambda]<\infty$. If one uses the framework of Jones' basic construction discussed in the introduction of \cite{AFH} (see e.g. the discussion preceding Theorem 1.3 of \cite{AFH}, or \cite[Section 6.3]{HayesCoAmen}), then one can give an alternative proof of Proposition  \ref{prop:translation of invariant vectors}. Indeed, with notation as in the introduction of \cite{AFH} the hypothesis of Proposition \ref{prop:translation of invariant vectors} imply that there is an $L(\cR)$-central vector in $L^{2}$ of the basic construction of $L(\cS)\leq L(\cR)$. Proposition  \ref{prop:translation of invariant vectors} then
follows from \cite[Lemma 2.3]{PimnserPopa} and the intertwining lemma of Popa \cite[Section 2]{PopaStrongRigidity} (see e.g. \cite[Lemma 2.8]{Bleary}, \cite[Lemma 2.4]{CAYBraid},\cite[Lemma 1.4]{IoanaHT}).

\begin{thm}\label{thm: the main result}
 Let $\cS\leq \cR$ be a coamenable inclusion of discrete, probability measure-preserving, equivalence relations on $(X,\mu)$. If $\cR$ is strongly ergodic, then there is an $\cS$-invariant, positive measure $E\subseteq X$ so that $\cS|_{E}$ is strongly ergodic. 
\end{thm}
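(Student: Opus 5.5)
We follow the strategy sketched in the introduction. First, we produce the intermediate relation. By \cite[Proposition 6.1]{Dye}, as in the proof of Proposition \ref{prop: TFAE commutant condition}, there is a relation $\widehat{\cS}$ with $\cS\leq\widehat{\cS}\leq\cR$ and
\[L(\widehat{\cS})=\Fix_{\cS}(L^{\infty}(X)^{\omega})'\cap L(\cR),\]
so that $[\widehat{\cS}]$ is exactly the set of $\gamma\in[\cR]$ fixing $\Fix_{\cS}(L^{\infty}(X)^{\omega})$ pointwise. In particular $[\cS]\subseteq[\widehat{\cS}]$, and every element of $\Fix_{\cS}(L^{\infty}(X)^{\omega})$ is $[\widehat{\cS}]$-fixed, so
\[\Fix_{\widehat{\cS}}(L^{\infty}(X)^{\omega})=\Fix_{\cS}(L^{\infty}(X)^{\omega}).\]
Hence $\widehat{\cS}$ satisfies condition (\ref{item: asymptotic bicentralizer}) of Proposition \ref{prop: TFAE commutant condition}, with $\widehat{\cS}$ in the role of $\cS$.

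Second, we compare the representations. Lemma \ref{lem: analogue of BMO} gives $L^{2}(\cR/\widehat{\cS})\preceq L^{2}(X)$. Coamenability of $\cS\leq\cR$ passes to $\cS\leq\widehat{\cS}\leq\cR$: there is a natural contraction $L^{2}(\cR/\cS)\to L^{2}(\cR/\widehat{\cS})$, sending $\xi_{x}$ to the fiberwise $\ell^{2}$-norm over each $\widehat{\cS}$-class, and it preserves the almost invariant unit fields of \cite[Theorem 3.5]{HayesCoAmen}. So Proposition \ref{prop: coamenable weak containment characterize} gives $L^{2}(X)\preceq L^{2}(\cR/\widehat{\cS})$, and the two representations are weakly equivalent. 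Since $\cR$ is strongly ergodic, hence ergodic, Corollary \ref{cor: weak containment strong ergodic} shows that $L^{2}(X)$ embeds in $L^{2}(\cR/\widehat{\cS})$. The image of $1$ is then a nonzero $[\cR]$-invariant vector. By Proposition \ref{prop:translation of invariant vectors} there is an $\widehat{\cS}$-invariant $E$ with $\mu(E)>0$ and $[\cR|_{E}:\widehat{\cS}|_{E}]<\infty$.

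Third, and this is the main obstacle, we transfer strong ergodicity back to $\cS$.

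1. We may shrink $E$ to an ergodic component of $\widehat{\cS}$; this needs countably many ergodic components of $\widehat{\cS}|_{E}$, which should follow from finite index in an ergodic $\cR$.
2. The restriction $\cR|_{E}$ is strongly ergodic, since strong ergodicity passes to corners of ergodic relations.
3. A finite-index ergodic subrelation of a strongly ergodic relation is strongly ergodic. Given an $\widehat{\cS}|_{E}$-almost invariant sequence of sets, we average over the finitely many coset selectors $\phi_{1},\dots,\phi_{n}$ to get $\cR|_{E}$-almost invariant functions. Their triviality, together with ergodicity, forces the original sequence to be trivial; equivalently, $\Fix_{\widehat{\cS}|_{E}}$ is a finite-dimensional, hence trivial, algebra.
4. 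We conclude with $\Fix_{\cS}(L^{\infty}(X)^{\omega})=\Fix_{\widehat{\cS}}(L^{\infty}(X)^{\omega})$. Cutting by $1_{E}$, the $\cS|_{E}$-fixed elements of $L^{\infty}(E)^{\omega}$ are $\widehat{\cS}|_{E}$-fixed and hence constant, so \cite[Proposition 1.1.3]{ConnesSE} gives that $\cS|_{E}$ is strongly ergodic.

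The delicate points are that $E$ is $\widehat{\cS}$-invariant, hence $\cS$-invariant, and that the fixed-point identification is compatible with restricting to $E$. The second needs care because $\cS$ need not be ergodic, so we must check that restricting the ultrapower to the $\cS$-invariant set $E$ commutes with taking fixed points.
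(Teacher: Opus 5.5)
Your proposal is correct and follows the paper's proof essentially step for step. The common route is Dye's intermediate relation $\widehat{\cS}$, weak equivalence of $L^{2}(X)$ and $L^{2}(\cR/\widehat{\cS})$ via coamenability and Lemma \ref{lem: analogue of BMO}, an invariant vector from Corollary \ref{cor: weak containment strong ergodic}, finite index on an $\widehat{\cS}$-invariant $E$, and transfer of strong ergodicity through $\Fix_{\cS}(L^{\infty}(X)^{\omega})=\Fix_{\widehat{\cS}}(L^{\infty}(X)^{\omega})$.

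There are two differences, both fine. You check coamenability of $\widehat{\cS}\leq\cR$ directly with the fiberwise-norm map instead of citing \cite[Theorem 3.2.4]{PopaCorr}. You also explicitly pass to an ergodic component of $\widehat{\cS}|_{E}$, which the paper skips when it asserts that $\widehat{\cS}|_{E}$ is strongly ergodic, even though $\widehat{\cS}|_{E}$ need not be ergodic.
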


\begin{proof}
Fix a free ultrafilter $\omega\in\beta\N\setminus\N$. 
By \cite[Proposition 6.1]{Dye} we may find a subrelation $\widehat{\cS}\leq \cR$ with $\cS\subseteq \widehat{\cS}$, and 
\[L(\widehat{\cS})=\Fix_{\cS}(L^{\infty}(X)^{\omega})'\cap L(\cR).\]
Since $\cS\leq \cR$ is coamenable, it follows by \cite[Theorem 3.2.4]{PopaCorr} (see also \cite[Theorem 3.5 (x)]{HayesCoAmen}) that $\widehat{\cS}\leq \cR$ is coamenable. Hence, by Proposition \ref{prop: coamenable weak containment characterize}, it follows that $L^{2}(X)\preceq L^{2}(\cR/\widehat{\cS})$ as representations of $C^{*}(\cR)$. 
However, the definition of $\widehat{\cS}$ forces that 
\[L(\widehat{\cS})=\Fix_{\widehat{\cS}}(L^{\infty}(X)^{\omega})'\cap L(\cR).\]
Thus, by Lemma \ref{lem: analogue of BMO}, it follows that $L^{2}(\cR/\widehat{\cS})\preceq L^{2}(X)$ as representations of $C^{*}(\cR)$. Thus $L^{2}(X)$ and $L^{2}(\cR/\widehat{\cS})$ are weakly equivalent representations of $C^{*}(\cR)$. Since $\cR$ is strongly ergodic, Corollary \ref{cor: weak containment strong ergodic} and  tell us that $L^{2}(\cR/\widehat{\cS})$ has an $[\cR]$-invariant vector. It now follows by Proposition \ref{prop:translation of invariant vectors} that there is a positive measure, $\widehat{\cS}$-invariant set $E$ so that $[\cR|_{E}:\widehat{\cS}|_{E}]<+\infty$. In particular, the fact that $\cR$ is strongly ergodic forces $\widehat{\cS}|_{E}$ to be strongly ergodic. We now use this to prove that $\cS|_{E}$ is strongly ergodic.

Indeed, suppose that $f\in L^{\infty}(E)^{\omega}$ and $\alpha_{\sigma}(f)=f$ for every $\sigma\in [\cS|_{E}]$. Since $E$ is $\widehat{\cS}$-invariant, and $\cS\leq \widehat{\cS}$, we then have that $f\in \Fix_{\cS}(L^{\infty}(X)^{\omega})$. But then the definition of $\widehat{\cS}$ forces $f$ to commute with $L(\widehat{\cS})$. This implies that $\alpha_{\sigma}(f)=f$ for every $\sigma\in [\widehat{\cS}]$. Again using the $\widehat{\cS}$-invariance of $E$ we conclude that $\alpha_{\sigma}(f)=f$ for every $\sigma\in [\widehat{\cS}|_{E}]$. Since $\widehat{\cS}|_{E}$ is strongly ergodic, we conclude that $f\in \C1$. This shows that $\cS|_{E}$ is strongly ergodic.

\end{proof}

\begin{cor}
 Suppose that $\cS\leq \cR$ is an everywhere coamenable inclusion of probability measure-preserving equivalence relations on $(X,\mu)$. If $\cR$ is strongly ergodic, then the ergodic decomposition of $\cS$ is atomic, and each ergodic component of $\cS$ is strongly ergodic. 
\end{cor}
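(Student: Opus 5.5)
The plan is a measure exhaustion argument built on Theorem \ref{thm: the main result}. The key local statement I will prove is: \emph{for every $\cS$-invariant measurable $F\subseteq X$ with $\mu(F)>0$, there is an $\cS$-invariant $E\subseteq F$ with $\mu(E)>0$ and $\cS|_{E}$ strongly ergodic.}

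To get the local statement, I cannot apply Theorem \ref{thm: the main result} directly to $\cS|_{F}\leq \cR|_{F}$. Two things are needed there. First, $\cS|_{F}\leq \cR|_{F}$ must be coamenable, and this is exactly what everywhere coamenability provides. Second, $\cR|_{F}$ must be strongly ergodic. Here I will use that $\cR$ is strongly ergodic, hence ergodic, so that restricting to a positive measure set preserves strong ergodicity. To see this, take an $\cR|_{F}$-asymptotically invariant sequence $A_n\subseteq F$. By ergodicity, choose finitely many $\gamma_1,\dots,\gamma_m\in[[\cR]]$ with domains in $F$ whose ranges cover $X$, say up to the finitely many copies needed. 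Then push $A_n$ to the set $\bigcup_i\gamma_i(A_n\cap \dom\gamma_i)$, using a partition of $X$ into pieces that are images of subsets of $F$. This produces an $\cR$-asymptotically invariant sequence. Alternatively I would phrase it via $L^\infty(X)^\omega$: an element of $\Fix_{\cR|_F}(L^\infty(F)^\omega)$ extends to an element of $\Fix_{\cR}(L^\infty(X)^\omega)$ by transporting along partial isomorphisms into $F$. Either way this is a standard fact, and I expect it to be the only real technical point, particularly checking that the extension is well defined and invariant under the whole full group. Applying Theorem \ref{thm: the main result} to $\cS|_{F}\leq \cR|_{F}$ then gives an $\cS|_F$-invariant (hence $\cS$-invariant) $E\subseteq F$ of positive measure with $\cS|_{E}$ strongly ergodic.

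For the exhaustion, I choose a maximal family $(E_i)_{i\in I}$ of pairwise disjoint, positive measure, $\cS$-invariant sets with each $\cS|_{E_i}$ strongly ergodic. Maximality is obtained greedily: at each stage pick a set whose measure is at least half the supremum of what is available. The family is countable because the measures are positive and sum to at most $1$. If the complement $F=X\setminus\bigcup_i E_i$ had positive measure, it would be $\cS$-invariant, and the local statement would produce a new set inside it, contradicting maximality. So $(E_i)$ partitions $X$ modulo null sets.

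Finally, each $\cS|_{E_i}$ is strongly ergodic and in particular ergodic, so the $E_i$ are exactly the ergodic components. Hence the ergodic decomposition of $\cS$ is atomic, indexed by the countable set $I$, and each component is strongly ergodic.
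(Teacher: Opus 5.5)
Your proposal is correct and follows essentially the same route as the paper: the paper takes a maximal family of $\cS$-invariant sets on which $\cS$ is strongly ergodic (via Zorn's lemma; your greedy exhaustion is equivalent) and applies Theorem \ref{thm: the main result} to $\cS|_{F}\leq \cR|_{F}$ on the leftover set, using everywhere coamenability. Your explicit check that $\cR|_{F}$ remains strongly ergodic supplies a step the paper leaves implicit when it applies the theorem to $\cS|_{E^{c}}\leq \cR|_{E^{c}}$; when you write that check out, take $\gamma_{1}=\id_{F}$ so that $\widetilde{A}_{n}\cap F=A_{n}$, which transfers the conclusion for $\widetilde{A}_{n}$ back to $A_{n}$.
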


\begin{proof}
Apply Zorn's lemma to find a maximal collection $(E_{i})_{i\in I}$ of pairwise disjoint, $\cS$-invariant, positive measure sets such that $\cS|_{E_{i}}$ is strongly ergodic. Since each $E_{i}$ is positive measure, and $\mu$ is a probability measure, we necessarily have that $I$ is countable. Set $E=\bigcup_{i\in I}E_{i}$. Suppose, for contradiction, that $\mu(E)<1$. Then since $\cS\leq \cR$ is everywhere coamenable, we have that $\cS|_{E^{c}}\leq \cR|_{E^{c}}$ is a coamenable inclusion. Thus Theorem \ref{thm: the main result} implies that there is a positive measure, $\cS$-invariant set $F\subseteq E^{c}$ with $\cS|_{F}$ strongly ergodic. This contradicts maximality of $(E_{i})_{i\in I}$. Hence, $\mu(E)=1$. So $(E_{i})_{i\in I}$ forms a partition modulo null sets into $\cS$-invariant positive measures sets with $\cS|_{E_{i}}$ being strongly ergodic for all $i\in I$.

\end{proof}


\begin{thebibliography}{10}

\bibitem{AFH}
M.~Abert, M.~Fraczyk, and B.~Hayes.
\newblock Co-spectral radius for countable equivalence relations, 2023.

\bibitem{BMOFull}
J.~Bannon, A.~Marrakchi, and N.~Ozawa.
\newblock Full factors and co-amenable inclusions.
\newblock {\em Comm. Math. Phys.}, 378(2):1107--1121, 2020.

\bibitem{BekkaOAsuperrigid}
B.~Bekka.
\newblock Operator-algebraic superridigity for {${\rm SL}_n(\Bbb Z)$}, {$n\geq
  3$}.
\newblock {\em Invent. Math.}, 169(2):401--425, 2007.

\bibitem{BHV}
B.~Bekka, P.~de~la Harpe, and A.~Valette.
\newblock {\em Kazhdan's property ({T})}, volume~11 of {\em New Mathematical
  Monographs}.
\newblock Cambridge University Press, Cambridge, 2008.

\bibitem{BO}
N.~P. Brown and N.~Ozawa.
\newblock {\em {$C^*$}-algebras and finite-dimensional approximations},
  volume~88 of {\em Graduate Studies in Mathematics}.
\newblock American Mathematical Society, Providence, RI, 2008.

\bibitem{SolidErg}
I.~Chifan and A.~Ioana.
\newblock Ergodic subequivalence relations induced by a {B}ernoulli action.
\newblock {\em Geom. Funct. Anal.}, 20(1):53--67, 2010.

\bibitem{CAYBraid}
I.~Chifan, A.~Ioana, and Y.~Kida.
\newblock {$W^*$}-superrigidity for arbitrary actions of central quotients of
  braid groups.
\newblock {\em Math. Ann.}, 361(3-4):563--582, 2015.

\bibitem{ConnesAP}
A.~Connes.
\newblock Almost periodic states and factors of type {${\rm III}\sb{1}$}.
\newblock {\em J. Functional Analysis}, 16:415--445, 1974.

\bibitem{ConnesSE}
A.~Connes.
\newblock Outer conjugacy classes of automorphisms of factors.
\newblock {\em Ann. Sci. \'Ecole Norm. Sup. (4)}, 8(3):383--419, 1975.

\bibitem{CFW}
A.~Connes, J.~Feldman, and B.~Weiss.
\newblock An amenable equivalence relation is generated by a single
  transformation.
\newblock {\em Ergodic Theory Dynamical Systems}, 1(4):431--450 (1982), 1981.

\bibitem{ConnesJonesCartan}
A.~Connes and V.~Jones.
\newblock A {${\rm II}_{1}$} factor with two nonconjugate {C}artan subalgebras.
\newblock {\em Bull. Amer. Math. Soc. (N.S.)}, 6(2):211--212, 1982.

\bibitem{CWPropT}
A.~Connes and B.~Weiss.
\newblock Property {${\rm T}$} and asymptotically invariant sequences.
\newblock {\em Israel J. Math.}, 37(3):209--210, 1980.

\bibitem{Conway}
J.~B. Conway.
\newblock {\em A course in functional analysis}, volume~96 of {\em Graduate
  Texts in Mathematics}.
\newblock Springer-Verlag, New York, second edition, 1990.

\bibitem{ConwayOT}
J.~B. Conway.
\newblock {\em A course in operator theory}, volume~21 of {\em Graduate Studies
  in Mathematics}.
\newblock American Mathematical Society, Providence, RI, 2000.

\bibitem{csoka2025quantitativeindistinguishabilitysparsedense}
E.~Csóka, P.~Mester, and G.~Pete.
\newblock Quantitative indistinguishability and sparse and dense clusters in
  factor of iid percolations, 2025.

\bibitem{DixmierC*}
J.~Dixmier.
\newblock {\em {$C\sp*$}-algebras}, volume Vol. 15 of {\em North-Holland
  Mathematical Library}.
\newblock North-Holland Publishing Co., Amsterdam-New York-Oxford, 1977.
\newblock Translated from the French by Francis Jellett.

\bibitem{DixmierW*}
J.~Dixmier.
\newblock {\em von {N}eumann algebras}, volume~27 of {\em North-Holland
  Mathematical Library}.
\newblock North-Holland Publishing Co., Amsterdam-New York, french edition,
  1981.
\newblock With a preface by E. C. Lance.

\bibitem{Dye}
H.~A. Dye.
\newblock On groups of measure preserving transformations $\textrm{I}$.
\newblock {\em Amer. Journ. Math.}, 81(1):119--159, 1959.

\bibitem{ElekLip}
G.~Elek and G.~Lippner.
\newblock Sofic equivalence relations.
\newblock {\em J. Funct. Anal.}, 258(5):1692--1708, 2010.

\bibitem{FelMooreI}
J.~Feldman and C.~C. Moore.
\newblock Ergodic equivalence relations, cohomology, and von {N}eumann
  algebras. {I}.
\newblock {\em Trans. Amer. Math. Soc.}, 234(2):289--324, 1977.

\bibitem{FelMoore}
J.~Feldman and C.~C. Moore.
\newblock Ergodic equivalence relations, cohomology, and von {N}eumann
  algebras. {II}.
\newblock {\em Trans. Amer. Math. Soc.}, 234(2):325--359, 1977.

\bibitem{floresharbourgroupoid}
F.~Flores and J.~Harbour.
\newblock Discrete measured groupoid von neumann algebras via the gaussian
  deformation, 2025.

\bibitem{Gaboriau-Lyons}
D.~Gaboriau and R.~Lyons.
\newblock A measurable-group-theoretic solution to von {N}eumann's problem.
\newblock {\em Invent. Math.}, 177(3):533--540, 2009.

\bibitem{giritlioglu2026translationactionsnonunimodulargroups}
F.~E. Giritlioglu.
\newblock Translation actions on non-unimodular groups and strong ergodicity,
  2026.

\bibitem{HayesCoAmen}
B.~Hayes.
\newblock Coamenability and cospectral radius for orbit equivalence relations,
  2024.

\bibitem{HVTypeIIICartan}
C.~Houdayer and S.~Vaes.
\newblock Type {III} factors with unique {C}artan decomposition.
\newblock {\em J. Math. Pures Appl. (9)}, 100(4):564--590, 2013.

\bibitem{IoanaHT}
A.~Ioana.
\newblock Uniqueness of the group measure space decomposition for {P}opa's
  {$\mathscr{HT}$} factors.
\newblock {\em Geom. Funct. Anal.}, 22(3):699--732, 2012.

\bibitem{JonesSchmidt}
V.~F.~R. Jones and K.~Schmidt.
\newblock Asymptotically invariant sequences and approximate finiteness.
\newblock {\em Amer. J. Math.}, 109(1):91--114, 1987.

\bibitem{KadisonRingroseII}
R.~V. Kadison and J.~R. Ringrose.
\newblock {\em Fundamentals of the theory of operator algebras. {V}ol. {II}},
  volume~16 of {\em Graduate Studies in Mathematics}.
\newblock American Mathematical Society, Providence, RI, 1997.
\newblock Advanced theory, Corrected reprint of the 1986 original.

\bibitem{Bleary}
B.~Leary.
\newblock Maximal amenability with asymptotic orthogonality in amalgamated free
  products.
\newblock {\em J. Operator Theory}, 86(1):17--29, 2021.

\bibitem{OrnWeiss}
D.~S. Ornstein and B.~Weiss.
\newblock Entropy and isomorphism theorems for actions of amenable groups.
\newblock {\em J. Analyse Math.}, 48:1--141, 1987.

\bibitem{PimnserPopa}
M.~Pimsner and S.~Popa.
\newblock Entropy and index for subfactors.
\newblock {\em Ann. Sci. \'{E}cole Norm. Sup. (4)}, 19(1):57--106, 1986.

\bibitem{PopaCorr}
S.~Popa.
\newblock Correspondences.
\newblock {\em INCREST preprint, unpublished.}, 1986.

\bibitem{PopaStrongRigidity}
S.~Popa.
\newblock Strong rigidity of {$\rm II_1$} factors arising from malleable
  actions of {$w$}-rigid groups. {I}.
\newblock {\em Invent. Math.}, 165(2):369--408, 2006.

\bibitem{SauerBetti}
R.~Sauer.
\newblock {$L^2$}-{B}etti numbers of discrete measured groupoids.
\newblock {\em Internat. J. Algebra Comput.}, 15(5-6):1169--1188, 2005.

\bibitem{SchmidtCohom}
K.~Schmidt.
\newblock Asymptotically invariant sequences and an action of {${\rm
  SL}(2,\,{\bf Z})$}\ on the {$2$}-sphere.
\newblock {\em Israel J. Math.}, 37(3):193--208, 1980.

\bibitem{SchmidtSpectralGap}
K.~Schmidt.
\newblock Amenability, {K}azhdan's property {$T$}, strong ergodicity and
  invariant means for ergodic group-actions.
\newblock {\em Ergodic Theory Dynamical Systems}, 1(2):223--236, 1981.

\bibitem{Taka}
M.~Takesaki.
\newblock {\em Theory of operator algebras. {I}}, volume 124 of {\em
  Encyclopaedia of Mathematical Sciences}.
\newblock Springer-Verlag, Berlin, 2002.
\newblock Reprint of the first (1979) edition, Operator Algebras and
  Non-commutative Geometry, 5.

\bibitem{AW25}
A.~Wu.
\newblock {\em von Neumann Orbit Equivalence}.
\newblock PhD thesis, University of Virginia, 2025.

\bibitem{ZimmerAmen2}
R.~J. Zimmer.
\newblock Cocycles and the structure of ergodic group actions.
\newblock {\em Israel J. Math.}, 26(3-4):214--220, 1977.

\bibitem{ZimmerAMen1}
R.~J. Zimmer.
\newblock Hyperfinite factors and amenable ergodic actions.
\newblock In {\em Group actions in ergodic theory, geometry, and
  topology---selected papers}, pages 152--160. Univ. Chicago Press, Chicago,
  IL, 2020.
\newblock Reprint of [0470692].

\end{thebibliography}

\end{document}